\providecommand{\U}[1]{\protect\rule{.1in}{.1in}}
\newtheorem{remark}{Remark}
\newtheorem{theorem}{Theorem}
\newtheorem{lemma}{Lemma}
\newtheorem{example}{Example}
\newtheorem{procedure}{Procedure}
\newenvironment{hypA}[1]{\begin{itemize}
\item[({\bf A#1})]}{\end{itemize}}
\newcounter{hypB}
\begin{document}

\begin{frontmatter}

\title{Asymptotic Behaviour of Approximate Bayesian Estimators}
\runtitle{Approximate Bayesian Computation}

\begin{aug}
  \author{\fnms{Thomas. A.}  \snm{Dean}\corref{}\thanksref{t2}\ead[label=e1]{tad36@cam.ac.uk}}
  \and
  \author{\fnms{Sumeetpal S.} \snm{Singh}\thanksref{t2}\ead[label=e2]{sss40@cam.ac.uk}}

  \thankstext{t2}{T.A. Dean and S.S. Singh's research is funded by the Engineering and Physical Sciences Research Council (EP/G037590/1) whose support is gratefully acknowledged. 
This}

  \runauthor{Dean et al.}

  \affiliation{University of Cambridge}

  \address{T.A.Dean, S.S. Singh, \\
  Department of Engineering, \\
  University of Cambridge, \\
  Cambridge, \\
  CB2 1PZ, UK\\
          \printead{e1,e2}}

\end{aug}

\begin{abstract}
Although approximate Bayesian computation (ABC) has become a popular technique for performing parameter estimation when the likelihood functions are analytically intractable there has not as yet been a complete investigation of the theoretical properties of the resulting estimators.  In this paper we give a
theoretical analysis of the asymptotic properties of ABC based parameter estimators for hidden Markov models and show that ABC based estimators satisfy asymptotically biased versions of the standard results in the statistical literature.
\end{abstract}

\begin{keyword}[class=AMS]
\kwd[Primary ]{62M09}
\kwd[; secondary ]{62B99}
\kwd{62F12}
\kwd{65C05}
\end{keyword}

\begin{keyword}
\kwd{Parameter Estimation}
\kwd{Hidden Markov Model}
\kwd{Maximum Likelihood}
\kwd{Approximate Bayesian Computation}
\kwd{Sequential Monte Carlo}
\end{keyword}

\end{frontmatter}

\bigskip


\thispagestyle{plain}

\section{Introduction}

One of the most fundamental problems in statistics is that of parameter estimation.  Suppose that one has a collection of probability laws $\mathbb{P}_{\theta}$ parametrised by a collection of parameter vectors $\theta \in \Theta$.  Suppose further that one has data $\hat{Z}$ generated by a process distributed according to some law $\mathbb{P}_{\theta^{\ast}}$ where the exact value of $\theta^{\ast} \in \Theta$ is unknown.  The problem of parameter estimation is to infer the value of the unknown parameter vector $\theta^{\ast}$ from the data $\hat{Z}$.  Many standard methods for estimating the value of $\theta^{\ast}$ are based upon using the
likelihood function $p_{\theta} (\hat{Z})$.  For example
Bayesian approaches use the likeilhood to reweight some prior distribution
to obtain a posterior distribution on the space of parameter vectors
that represents ones sense of certainty of any given parameter vector
being equal to $\theta^{\ast}$. Alternatively one may take a frequentist
approach and estimate $\theta^{\ast}$ with the parameter vector which
maximises the value of the corresponding likelihood (ie. maximum likelihood
estimation (MLE)). 

Of course these approaches all rely on one being
able to compute the likelihood functions $p_{\theta} (\hat{Z})$, either exactly or numerically.  However, in a wide range of applications this is not possible, either because no analytic expression for the likelihoods exists or else because computing them is computationally intractable.  Despite this  one is  often still
able, in such cases, to generate random variables distributed according to the corresponding laws $\mathbb{P}_{\theta}$.  This has led to the development of methods in which $\theta^{\ast}$ is estimated by implementing
a standard likelihood based parameter estimator using some principled
approximation to the likelihood instead of the true
likelihood function itself. In general these approximations are estimated using Monte Carlo simulation based on generating samples from the relevant probability distributions.

A method which has recently
become very popular in practice and on which we shall focus our attention for the rest of this paper is approximate Bayesian computation (ABC).  A non-exhaustive list of references for applications of the method includes: \citep{mckcoodea2009,petwutshe2010,priseiperfel1999,ratandwiuric2009,tavbalgridon1997}. See also \citep{sisfan2010} for a review
on computational methodology.  The standard ABC approach to approximating the likelihood is as follows.  Suppose that the distributions $\mathbb{P}_{\theta}$ all have a density  $p_{\theta}\left(\cdot\right)$ on some space $\mathbb{R}^{m}$ w.r.t. some dominating measure
$\mu$.  Furthermore suppose that the functions $p_{\theta} \left(\cdot\right)$ cannot
be evaluated directly but that one can generate random variables distributed
according to the laws $\mathbb{P}_{\theta}$. Given some data $\hat{Z}$ the general ABC approach to approximating
the values of the likelihood functions $p_{\theta} ( \hat{Z} )$  is to choose a metric $d\left(\cdot,\cdot\right)$
on $\mathbb{R}^{m}$ and a tolerance parameter $\epsilon>0$ and for all $\theta \in \Theta$ approximate
the likelihood $p_{\theta} ( \hat{Z} )$ with 
\begin{equation}
p^{\epsilon}_{\theta} (\hat{Z}) \triangleq  \mathbb{P}_{\theta} \left(d(\hat{Z},Z)\leq\epsilon\right).\label{secABCapproxdef}
\end{equation}
Typically the probabilities \eqref{secABCapproxdef}
are themselves estimated
using Monte Carlo techniques.  A particularly appealing feature of the ABC methodology is that, despite the methods name, the resulting approximations to the likelihoods may then be used in any likelihood based parameter inference methodology the user desires.

Intuitively, the justification for the ABC approximation is that for sufficiently small
$\epsilon$
\[
\frac{1}{ \mu \left( B_{\hat{Z}}^{\epsilon} \right) }\mathbb{P}_{\theta} \left(d(\hat{Z},Z)\leq\epsilon\right)
\approx  p_{\theta} \left(  \hat{Z} \right) 
\]
 where $B_{\hat{Z}}^{\epsilon}$ denotes the $d$-ball of radius $\epsilon$ around the point $\hat{Z}$ and thus the probabilities \eqref{secABCapproxdef} will provide a good
approximation to the likelihood, up to the value of some
renormalising factor which is independent of $\theta$ and hence can be ignored.

Clearly in general the estimators based on ABC approximations to the likelihood will differ from those based on the exact value of the likelihood function, however although the use of ABC has become commonplace there has to date
been little investigation of the precise nature of the theoretical properties of ABC based estimators.  One notable exception is \citep{feapra2010}.  In this paper the authors consider the problem of finding the optimal choice, for a given data set, of summary statistic and $\epsilon$ in order to  minimise the mean square error of the resulting ABC posterior distribution on parameter space.  Unfortunately the resulting optimal choice of summary statistic involves computing a conditional expectation w.r.t. the unknown posterior distribution and hence it can only be computed approximately and not exactly.  Further the analysis is done only for fixed size data sets and the asymptotic properties of the ABC estimator are left unexplored.

An alternative approach is taken in \citep{deasinjaspet2010} in which the asymptotic behaviour of the MLE implemented with the ABC approximation to the likelihood (henceforth ABC MLE) was studied.  The analysis in this paper is based on the observation that the ABC approximation to the likelihood can be considered as being equal to the likelihood function of a perturbed probability distribution.  Using this observation it was shown that ABC MLE in some sense inherits its behaviour from the standard MLE but that the resulting estimator has an innate asymptotic bias.  Furthermore, it is shown that this bias can be made arbitrarily small by choosing a sufficiently small values of the ABC parameter $\epsilon$. 

The results in \citep{deasinjaspet2010} concerning the asymptotic behaviour of ABC MLE provide a mathematical justification of this method analgous to that provided for the standard MLE by the results concerning asymptotic consistency.  However they do not establish any asymptotic normality type properties of this estimator and there  are as yet no analogous results for the ABC Bayesian parameter estimator.  The aim of this paper is to bridge these theoretical gaps by showing that the standard results in likelihood based parameter estimation, that is to say  asymptotic consistency, asymptotic normality and Bernstein-von Mises type theorems, also hold in a suitably modified version for parameter estimators based on ABC approximations to the likelihood.  In the next section we provide an outline of the approach that we shall take to proving these results.

\subsection{Contributions and Structure}

In this paper we shall study the asymptotic behaviour of ABC  parameter estimators when used to perform inference for hidden Markov models.  This will be convenient as (as we will show) the Markovian context imbues the ABC approximations with a particularly nice mathematical structure.  Furthermore, as HMMs are used as statistical models in a wide range of applications including
Bioinformatics (e.g.~\citep{dureddkromit1998}), Econometrics (e.g.~\citep{kimshechi1998}) and Population genetics (e.g.~\citep{felchu1996}) (see also \citep{caprydmou2005} for a recent overview), the class of models thus considered is sufficently general to be of genuine practical interest. 

For the purpose of this paper a HMM will be considered to be  a pair of
discrete-time stochastic processes, $\left\{  X_{k}\right\}  _{k \geq0}$ and
$\left\{  Y_{k}\right\}  _{k \geq0}$. The hidden process, $\left\{
X_{k}\right\}  _{k \geq0}$, is a homogenous Markov chain taking values in some
Polish space $\mathcal{X}$ and the observed process $\left\{  Y_{k}\right\}
_{k \geq0}$ takes values in $\mathbb{R}^{m}$ for some $m\geq1$. Conditional on
$X_{k}$ the observations $Y_{k}$ are statistically independent of the random
variables $Y_{0} , \ldots, Y_{k-1} ; X_{0} , \ldots, X_{k-1}$.   In many models the densities of the conditional laws of the observed process w.r.t.~the hidden state either have no known analytic expression or else are computationally intractable.  In this case it follows that standard methods to estimating the likelihoods of the observed process, eg.~SMC, can no longer be used and that an alternative approach like ABC must be used.   For the rest of this paper  we shall consider performing ABC based parameter estimation for HMMs using the following specialization of the standard ABC likelihood approximation \eqref{secABCapproxdef}, proposed in \citep{jassinmarmcc2010}, for when the observations are generated by a HMM.  Specifically, given a sequence of observations $\hat{Y}_{1} , \ldots , \hat{Y}_{n}$ from a HMM, we shall approximate the corresponding likelihood functions with the probabilities
\begin{equation}
\mathbb{P}_{\theta} \left(  Y_{1} \in B^{\epsilon}_{\hat{Y}_{1}} , \ldots, Y_{n} \in
B^{\epsilon}_{\hat{Y}_{n}} \right)  \, \label{introABCllh}
\end{equation}
where for all $y \in \mathbb{R}^{m}$, $B^{\epsilon}_{y}$ denotes the ball of radius $\epsilon$ centered around
the point $y$. The benefit of this approach is that it retains the Markovian
structure of the model. This facilitates both simpler Markov chain Monte Carlo (MCMC) (e.g.~\citep{mckcoodea2009}) and sequential Monte Carlo (SMC) (e.g.~\citep{jassinmarmcc2010}) implementation of the ABC
approximation. Furthermore the resulting approximation has a structure which is particularly tractable to mathematical analysis.

The purpose of this paper is to show that one can prove results about the asymptotic behaviour of ABC based parameter estimators analogous to the standard results in the literature concerning the asymptotic behaviour of  estimators based on the exact value of the likelihood.  In particular we show that one can develop a theoretical justification of  ABC  parameter estimation procedures based on their large sample properties analogous to those provided for Bayesian and maximum likelihood based procedures by the standard Bernstein-von Mises and asymptotic  consistency and normality results respectively.  Our approach   is based on the observation in \citep{deasinjaspet2010} that ABC can be considered as performing parameter estimation using the likelihoods of a collection of perturbed HMMs which suggests that in some sense ABC based parameter estimators should inherit their behaviour from the standard statistical estimators.  We first show that unlike the MLE, which is
asymptotically consistent, the ABC MLE estimator has an innate asymptotic bias in the sense that the value of the estimator converges to the wrong point in parameter space as the number of observations tends to infinity.  Moreover we show that asymptotically the ABC MLE is normally distributed around this biased estimate.  Secondly we show that the resulting ABC Bayesian posterior distributions obey a Bernstein-von Mises type theorem but that the posteriors are again asymptotically biased in the sense that as the number of data points goes to infinity the resulting posterior distributions concentrate about the limit of the ABC MLE rather than the true parameter value.  Finally we show that the size of the asymptotic bias of both the ABC Bayesian and ABC MLE estimators goes to zero as $\epsilon$ tends to zero and under mild regularity conditions we obtain sharp rates for this convergence.   Together these results show that ABC based parameter estimates are asymptotically biased with a bias which can be made arbitrarily small by taking a suitable choice of $\epsilon$ and thus provide a rigorous justification for performing statistical inference based on ABC approximations to the likelihood.

We note that the results in this paper extend those in \citep{deasinjaspet2010} in several ways.  In particular  we provide a  much sharper analysis of the ABC MLE than that contained in \citep{deasinjaspet2010}.  The crucial difference between the current paper and \citep{deasinjaspet2010} is that it is not possible using the techniques of \citep{deasinjaspet2010} to show that the ABC MLE has a unique limit point.  In contrast, in this paper we show that for sufficiently small values of $\epsilon$ the ABC MLE has one and only one limit point.  This then enables us to extend the scope of the analysis in \citep{deasinjaspet2010} to include asymptotic normality results for the ABC MLE and Bernstein-von Mises type results for ABC based Bayesian estimators.

This paper is structured as follows. In Section \ref{sec:notassump} the
notation and assumptions are given and in Section \ref{sec:standABC} we present our main results concerning the asymptotic behaviour of ABC.  The article is summarized in Section \ref{sec:summary} and supporting technical lemmas and proofs of some of the theoretical results are housed
in the four appendices.

\section{Notation and Assumptions}

\label{sec:notassump}

\subsection{Notation and Main Assumptions}

\label{subsecNotandassumptions}

Throughout this paper we shall use lower case letters $x,y,z$ to
denote dummy variables and upper case letters $X,Y,Z$ to denote random
variables. Observations of a random variable, i.e.~data, will be denoted by $\hat{Y}$.
Given any $\epsilon>0$ and $y\in\mathbb{R}^{m}$ we shall let $B_{y}^{\epsilon}$ 
denote the closed ball of radius $\epsilon$ centered on the point
$y$ and let $\mathcal{U}_{B_{y}^{\epsilon}}$ denote the uniform
distribution on $B_{y}^{\epsilon}$. For any $A\subset\mathbb{R}^{m}$ the indicator function of $A$
will be denoted by $\mathbb{I}_{A}$.

In what follows we need to refer to various different scalar, vector and matrix norms.  Given a scalar $z$ and a vector $a$ we shall let $\left\vert z \right\vert$ and $\left\vert a \right\vert$ denote the standard Euclidean scalar and vector norms respectively and   for any matrix $M$ we shall let $\left\Vert M \right\Vert$ denote the Frobenius norm.  We note that although using $\left\vert \cdot \right\vert$ to denote multiple norms is an abuse of notation there is in practice no loss of clarity as the precise meaning of these terms will always be made clear by the context in which they are used.

For any vector of variables $a$ we shall let $\nabla_{a}$ denote the gradiant operator with respect to $a$.  Moreover given vectors of variables $a,b,c$ of dimensions $d_{1}, d_{2}$ and $d_{3}$ we shall let $\nabla_{a} \nabla_{b}$ and $\nabla_{a} \nabla_{b} \nabla_{c}$ denote the $d_{1} \times d_{2}$ and  $d_{1} \times d_{2} \times d_{3}$ matricies of partial derivatives with entries given by $\frac{ \partial^{2} }{ \partial a_{i} b_{j} }$ and $\frac{ \partial^{3} }{ \partial a_{i} b_{j} c_{k} }$ respectively.  Further, for any vector of variables $a$ we shall let $\nabla_{a}^{2}$ and $\nabla_{a}^{3}$ denote $\nabla_{a} \nabla_{a}$ and $\nabla_{a} \nabla_{a} \nabla_{a}$ respectively.  Further given vectors $u,v,w$ we shall let $u \ast v$ and $u \ast v \ast w$ denote the outer products of $u,v$ and $u,v,w$ and $u^{\ast 2}$ and $u^{\ast 3}$ denote the outer products $u \ast u$ and $u \ast u \ast u$ respectively.

It is assumed that for any HMM the hidden state $\left\{ X_{k}\right\} _{k\geq0}$
is time-homogenous and takes values in a compact Polish space $\mathcal{X}$
with associated Borel $\sigma$-field $\mathcal{B}\left(\mathcal{X}\right)$.
Throughout this paper it will be assumed that we  have a collection of HMMs all defined on the same state space and parametrised by some parameter vector $\theta$ taking values in a {\em connected} compact set $\Theta\in\mathbb{R}^{d}$. Furthermore we shall reserve $\theta^{\ast}$ to denote the `true' value of the parameter vector $\theta$.  For
each $\theta\in\Theta$ we shall let $Q_{\theta}\left(x,\cdot\right)$ denote
the transition kernel of the corresponding Markov chain and for each
$x\in\mathcal{X}$ and $\theta\in\Theta$ we assume that $Q_{\theta}\left(x,\cdot\right)$
has a density $q_{\theta}\left(x,\cdot\right)$ w.r.t.~some common
finite dominating measure $\mu$ on $\mathcal{X}$. The initial distribution
of the hidden state will be denoted by $\pi_{0}$.

We also assume that the observations $\left\{ Y_{k}\right\} _{k\geq0}$
take values in a state space $\mathcal{Y}\subset\mathbb{R}^{m}$ for
some $m\geq1$. Furthermore, for each $k$ we assume that the random
variable $Y_{k}$ is conditionally independent of $\ldots,X_{k-1};X_{k+1},\ldots$
and $\ldots,Y_{k-1};Y_{k+1},\ldots$ given $X_{k}$ and that the conditional
laws have densities $g_{\theta}\left(y\vert x\right)$ w.r.t.~some
common $\sigma$-finite dominating measure $\nu$. We further assume
that for every $\theta$ the joint chain $\left\{ X_{k},Y_{k}\right\} _{k\geq0}$
is positive Harris recurrent and has a unique invariant distribution
$\pi_{\theta}$.  For each $\theta \in \Theta$ we shall let $\overline{\mathbb{P}}_{\theta}$
denote the law  of stationary distribution of the corresponding HMM and $\overline{\mathbb{E}}_{\theta}$
denote expectations with respect to the stationary distribution $\overline{\mathbb{P}}_{\theta}$.

We shall frequently have to refer to various kinds of both finite, infinite and doubly infinite sequences.  For brevity the following shorthand notations are used.  For any pair of integers $k \leq n$, $Y_{k:n}$ denotes the sequence of random variables $Y_{k} , \ldots ,Y_{n}$; $Y_{-\infty:k}$ denotes the sequence $\ldots ,Y_{k}$; $Y_{n:\infty}$ denotes the sequence $Y_{n} , \ldots $ and $Y_{-\infty:k;n:\infty}$ denotes the sequence $\ldots, Y_{k} ; Y_{n}, \ldots$.  Further given a measure $\mu$ on a Polish space $\mathcal{X}$ we let $\int \cdot \, \mu(d x_{1:n})$ denote integration w.r.t. the n-fold product measure $\mu^{\otimes n}$ on the n-fold product space $\mathcal{X}^{n}$.  

For any two probability measures $\mu_{1},\mu_{2}$ on a measurable
space $(E,\mathscr{E})$ we let $\|\mu_{1}-\mu_{2}\|_{TV}$ denote the total
variation distance between them. For all $p \in [1,\infty )$ we let
$L_{p}(\mu)$ denote the set of real valued measurable functions satisfying
$\int\left\vert f(x)\right\vert ^{p}\mu(dx)<\infty$.

Finally we note that when writing the likelihood $p_{\theta} ( \hat{Y}_{1} , \ldots , \hat{Y}_{n} )$ of a sequence of observations $ \hat{Y}_{1} , \ldots , \hat{Y}_{n} $ we shall typically suppress the dependence of the likelihood function on the the initial condition
of the hidden state of the process unless we specifically need to refer to it in which case we shall write the likelihood as $p_{\theta} ( \hat{Y}_{1} , \ldots , \hat{Y}_{n} \vert X_{0} = x)$.

\subsection{Particular Assumptions}

\label{subsecParticularAssumptions}

In addition to the assumptions above, the following particular assumptions are
made at various points in the article.

\begin{hypA} 1 The parameter vector $\theta^{\ast}$\ belongs to the interior of
$\Theta$ and $\theta=\theta^{\ast}$ if and only if $\overline{\mathbb{P}%
}_{\theta} ( \ldots , Y_{-1} , Y_{0} , Y_{1} , \ldots )  =\overline{\mathbb{P}}_{\theta^{\ast}} ( \ldots , Y_{-1} , Y_{0} , Y_{1} , \ldots ) $.
\end{hypA}

\begin{hypA} 2 For all $y\in\mathcal{Y}$, $x,x^{\prime}\in\mathcal{X}$,
the mappings $\theta\rightarrow q_{\theta}(x,x^{\prime})$ and $\theta\rightarrow g_{\theta}(\left.y\right\vert x)$
are three times continuously differentiable w.r.t. $\theta$. \end{hypA}

\begin{hypA} 3 There exist constants $\underline{c}_{1},\overline{c}_{1}\in(0,\infty)$
such that for every $y\in\mathcal{Y}$, $x,x^{\prime}\in\mathcal{X}$,
$\theta\in\Theta$ \begin{equation}
\begin{gathered}\underline{c}_{1}\leq q_{\theta}(x,x^{\prime})\leq\overline{c}_{1},\\
g_{\theta}(\left.y\right\vert x)\leq\overline{c}_{1}.\end{gathered}
\label{eq:Cond3a}\end{equation}
 \end{hypA}

\begin{hypA} 4 There exists a constant $\overline{c}_{2}\in(0,\infty)$
such that for every $y\in\mathcal{Y}$, $x,x^{\prime}\in\mathcal{X}$,
$\theta\in\Theta$ \begin{align*}
\left\vert \nabla_{\theta}\log q_{\theta}(x,x^{\prime})\right\vert ,\vert\nabla_{\theta}^{2}\log q_{\theta}(x,x^{\prime})\vert\leq\overline{c}_{2}.\end{align*}
 \end{hypA}

\begin{hypA} 5 For all $\theta \in \Theta$
\begin{equation} 
0 < \int_{\mathcal{X}}  g_{\theta} \left( y \vert x \right) \mu ( dx )  < \infty \label{eq:Cond5a}
\end{equation}
for all $y \in \mathcal{Y}$.
 \end{hypA}

 \begin{hypA} 6
 For any $K>0$
  \begin{equation}
\begin{gathered}E_{\theta^{\ast}}\left[\sup_{\theta \in \Theta} \sup_{x \in \mathcal{X}} \sup_{z\in B_{0}^{K}}\left\Vert \nabla_{\theta}\log g_{\theta}\left(Y +z\vert x\right)\right\Vert ^{3}\right],\\
E_{\theta^{\ast}}\left[\sup_{\theta \in \Theta} \sup_{x \in \mathcal{X}} \sup_{z\in B_{0}^{K}} \left\Vert \nabla_{\theta}^{2}\log g_{\theta}\left(Y + z\vert x\right)\right\Vert^{2} \right] , \\ 
E_{\theta^{\ast}}\left[\sup_{\theta \in \Theta} \sup_{x \in \mathcal{X}} \sup_{z\in B_{0}^{K}} \left\Vert \nabla_{\theta}^{3}\log g_{\theta}\left(Y + z \vert x\right)\right\Vert \right] \leq\infty  . \label{eq:Cond5b}
\end{gathered}
\end{equation}
 \end{hypA}
 
 \begin{remark}
 Assumptions (A1)-(A6) are similar to those used in \citep{doumouryd2004} to prove consistency of the MLE for HMMs.  We use similar assumptions in this paper as, broadly speaking, our approach will be to show that the ABC parameter estimators inherit their properties from standard statistical estimators.  However the methods and emphasis of this paper differ from those in \citep{doumouryd2004} and as a result the assumptions we require have a slightly different flavour.  In particluar we shall require slightly stronger conditions on the differentiability of the conditonal densities $g_{\theta} ( y \vert x)$ but slightly weaker conditions on their integrability.   
 \end{remark}

\begin{remark}
In general assumptions (A3)-(A6) will hold when the state space $\mathcal{X}$ is compact.  However we expect that the behaviours predicted by Theorems \ref{thmABCMLEconsistency}, \ref{thmABCMLEnormality}, \ref{secABCPostBernVMthm} and \ref{secABCsmallepsthmGenCase}  will provide a good qualitative guide to the behaviour of ABC MLE in practice even in cases where the underlying HMMs do not satisfy these assumptions.    
\end{remark}

\section{Approximate Bayesian Computation}

\label{sec:standABC}

\subsection{Structure of ABC Estimators} \label{ABCProcedureApproxDiscussionSubSection}

Suppose that a collection of HMMs
\begin{equation}  \label{origHMM}
\left\{  X_{k} , Y_{k}  \right\}  _{k \geq0}
\end{equation}
parameterised by some $\theta\in\Theta$ are given.  For any sequence of observations $\hat{Y}_{1} , \ldots , \hat{Y}_{n}$ for $\theta\in\Theta$ let $p_{\theta} ( \hat{Y}_{1} , \ldots , \hat{Y}_{n} )$ denote the likelihood of the observations under the corresponding HMM \eqref{origHMM}.  Following \citep{jassinmarmcc2010} we consider approximating $p_{\theta} ( \hat{Y}_{1} , \ldots , \hat{Y}_{n} )$ by the ABC  approximation,
\begin{align}
\lefteqn{ \mathbb{P}_{\theta}\left(  Y_{1} \in B_{\hat{Y}_{1}}^{\epsilon} , \ldots,
Y_{n} \in B_{\hat{Y}_{n}}^{\epsilon} \right) } \notag \\
&&  = \int_{\mathcal{X}^{n+1}\times\mathcal{Y}^{n}} \bigg[\prod_{k=1}^{n}
q_{\theta} (x_{k-1}, x_{k})\mathbb{I}_{B^{\epsilon}_{\hat{Y}_{k}}}(y_{k})
g_{\theta} (y_{k} \vert x_{k}) \bigg] \pi_{0} (d x_{0}) \, \mu( d
x_{1:n} ) \nu( d y_{1:n} ) . \notag \\
&& \label{eq:approxnewver1}
\end{align}
The purpose of this paper is to analyse the asymptotic properties of likelihood based parameter estimators implemented using the ABC approximate likelihoods \eqref{eq:approxnewver1}.  The key to our analysis is the following observation, see \citep{deasinjaspet2010} for more details;
\begin{eqnarray}
\lefteqn{\int_{\mathcal{X}^{n+1}\times\mathcal{Y}^{n}} \bigg[\prod_{k=1}^{n}
q_{\theta} (x_{k-1}, x_{k})\mathbb{I}_{B^{\epsilon}_{\hat{Y}_{k}}}(y_{k})
g_{\theta} (y_{k} \vert x_{k}) \bigg] \pi_{0} (d x_{0}) \, \mu( d
x_{1:n} ) \nu( d y_{1:n} )  \notag } \\
&&  \qquad \qquad   \qquad \qquad \propto\int_{\mathcal{X}^{n+1}} \bigg[\prod_{k=1}^{n} q_{\theta} (x_{k-1},
x_{k}) g^{\epsilon}_{\theta} (\hat{Y}_{k} \vert x_{k})\bigg] \pi_{0} (d x_{0}) \mu(d
x_{1:n})  \label{eq:approx}%
\end{eqnarray}
where
\begin{equation}
\label{EqnPertCondLaw} g^{\epsilon}_{\theta} (y \vert x) = \frac{1}{ \nu \left(
B^{\epsilon}_{y} \right) } \int_{B^{\epsilon}_{y}} g_{\theta} (y^{\prime}
\vert x) \, \nu( d y^{\prime} ) .
\end{equation}

The crucial point is that the quantity $g^{\epsilon}_{\theta} ( y \vert x )$ defined in \eqref{EqnPertCondLaw} is the density of the measure obtained by convolving the measure corresponding to $g_{\theta} ( y \vert x )$ with  $\mathcal{U}_{B^{\epsilon}_{0}}$ where the density is taken w.r.t.~the new dominating measure obtained by convolving $\nu$ with $\mathcal{U}_{B^{\epsilon}_{0}}$.  One can then immediately see that the quantities $q_{\theta} (x, x^{\prime})$ and $g^{\epsilon}_{\theta} (y \vert x)$ appearing in \eqref{eq:approx} are the
transition kernels and conditional laws respectively for a perturbed HMM
$\left\{  X_{k} , Y^{\epsilon}_{k} \right\}  _{k \geq0}$ defined such that
it is equal in law to the process 
\begin{equation}  \label{alg1pertHMM}
\left\{  X_{k} , Y_{k} +
\epsilon Z_{k} \right\}  _{k \geq0}
\end{equation}
where $\left\{  X_{k} , Y_{k} \right\}
_{k \geq0}$ is the original HMM and the $\left\{  Z_{k} \right\}  _{k \geq0}$ are
an i.i.d. sequence of $\mathcal{U}_{B^{1}_{0}}$ distributed random variables.  

\subsection{Theoretical Results} 

It follows  that performing statistical inference using the ABC approximations to the likelihood is equivalent to performing inference using a misspecified collection of models.  It is well known (see for example \citep{whi1982}) that this will in general lead to biased estimates of the true parameter value.   In the rest of this paper we shall investigate the theoretical consequences of this for ABC based parameter estimators.

We start by showing that almost surely the ABC MLE will converge, with increasing sample size, to a given point in parameter space that is not equal to the true parameter value (more generally the set of accumulation points will belong to a given subset of parameter space) and hence that the ABC MLE is asymptotically biased (Theorem \ref{thmABCMLEconsistency}).  Further, we show that these accumulation points must lie in some neighbourhood of
the true parameter value and that the size of this neighbourhood shrinks to zero as $\epsilon$ goes
to zero.  Next we show that for sufficiently small values of $\epsilon$ the ABC MLE has a unique limit point and that asymptotically the ABC MLE is normally distributed about this point with a variance that is proportional to $\frac{1}{n}$ (Theorem \ref{thmABCMLEnormality}).  Third we show that aymptotically the ABC Bayesian posterior converges to that of a Normal random variable, centered on the location of the ABC MLE and with variance again proportional to $\frac{1}{n}$ (Theorem \ref{secABCPostBernVMthm}).  Finally we show that under certain
Lipschitz conditions one can obtain a rate for the decrease in the size of
the asymptotic bias of the ABC parameter estimators (Theorem \ref{secABCsmallepsthmGenCase}).

These results show that the error of ABC based parameter estimators may be decomposed into two parts.  A bias component whose size depends on $\epsilon$ and a variance component whose size is proportional to $\frac{1}{\sqrt{n}}$.  Furthermore they show that the size of the bias can be made arbitrarily small by a suitable choice of $\epsilon$.  Thus taken together the results show that the accuracy of estimators based on ABC approximations to the likelihood can be made to be arbitrarily close to that  of estimators based on the exact value of the likelihood, providing a rigourous mathematical justification for the ABC methodology.

We note that there are two important technical issues that arise in the proofs of these results.  Firstly, as noted in \citep{deasinjaspet2010}, one cannot simply analyse the behaviour of the ABC MLE by extending the parameter space $\Theta$ to include $\epsilon$ and then applying standard results from the theory of MLE because the perturbed likelihoods $g^{\epsilon}_{\theta}(y \vert x)$ are in some sense insufficiently continuous.  Instead one has to establish that in some sense the Lebesgue differentiation theorem still holds upon taking  asymptotic limits.

Secondly we note that   because the dominating measures of the original and perturbed HMMs are no longer necessarily mutually absolutely continuous with respect to each other we can no longer take the standard approach to analysing likelihood based estimators by studying the limits of 
\begin{equation*}
\lim_{n \to \infty} \frac{1}{n} \log p^{\epsilon}_{\theta} ( \hat{Y}_{1},\ldots,\hat{Y}_{n} ) 
\end{equation*}
and interpreting them in terms of Kullback-Leibler distances.  To avoid this problem we instead show that for any $\epsilon$ the relative mean log likelihood surfaces (considered as functions of $\theta$)
\begin{equation*}
\frac{1}{n} \left( \log p^{\epsilon}_{\theta} ( \hat{Y}_{1},\ldots,\hat{Y}_{n} ) - \log p^{\epsilon}_{\theta^{\ast}} ( \hat{Y}_{1},\ldots,\hat{Y}_{n} ) \right)
\end{equation*}
almost surely converge to some limiting surface $l^{\epsilon} ( \theta )$.  The behaviour of ABC based parameter estimators can then be understood by examining the behaviour of the corresponding limiting log likelihood surfaces.  The key result in doing so is the following whose proof is deferred until Appendix B.

\begin{theorem} \label{secABCABCBayesPostthm} Suppose that one has a collection of HMMs parameterized by some parameter
vector $\theta\in\Theta$ that satisfy assumptions (A1)-(A6).  For any $\epsilon \geq 0$ let $p_{\theta}^{\epsilon} ( \cdots) $ denote the likelihood function w.r.t.~the perturbed HMMs \eqref{alg1pertHMM} (and where by definition we let $p_{\theta}^{0} ( \cdots) $ denote the likelihood function of the original HMM \eqref{origHMM}).  Let data $\hat{Y}_{1},\ldots,\hat{Y}_{n}$
generated by the HMM corresponding to an unknown parameter vector
$\theta^{\ast}$ be given.  Then for every $\epsilon \geq 0$ there exists a twice continuously differentiable
function $l^{\epsilon}\left(\theta\right):\Theta\to\mathbb{R}$
such that  for all $x \in \mathcal{X}$   one has that $\bar{\mathbb{P}}_{\theta^{\ast}}$ a.s.
\begin{equation} \label{eq:ABcMLESurf}
\begin{gathered}
\frac{1}{n}\left(\log p_{\theta}^{\epsilon} ( \hat{Y}_{1},\ldots,\hat{Y}_{n} \vert X_{0} = x ) - \log p_{\theta^{\ast}}^{\epsilon} ( \hat{Y}_{1},\ldots,\hat{Y}_{n} )  \vert X_{0} = x \right) \to l^{\epsilon}\left(\theta\right) \\
\frac{1}{n} \nabla_{\theta} \left(\log p_{\theta}^{\epsilon} ( \hat{Y}_{1},\ldots,\hat{Y}_{n}  \vert X_{0} = x ) - \log p_{\theta^{\ast}}^{\epsilon} ( \hat{Y}_{1},\ldots,\hat{Y}_{n}  \vert X_{0} = x  ) \right) \to \nabla_{\theta} l^{\epsilon}\left(\theta\right) \\
\frac{1}{n} \nabla_{\theta}^{2} \left(\log p_{\theta}^{\epsilon} ( \hat{Y}_{1},\ldots,\hat{Y}_{n}   \vert X_{0} = x ) - \log p_{\theta^{\ast}}^{\epsilon} ( \hat{Y}_{1},\ldots,\hat{Y}_{n} )  \vert X_{0} = x \right) \to \nabla_{\theta}^{2} l^{\epsilon}\left(\theta\right)
\end{gathered}
\end{equation}
 uniformly in $\theta$.  
 
 Furthermore $l^{\epsilon}\left(\theta\right), \nabla_{\theta} l^{\epsilon}, \nabla_{\theta}^{2} l^{\epsilon} \to l^{0}\left(\theta\right), \nabla_{\theta} l^{0}, \nabla_{\theta}^{2} l^{0}$ as $\epsilon \to 0$, where the convergence is again uniform in $\theta$.  
\end{theorem}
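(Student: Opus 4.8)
The plan is to follow the now-standard program for the strong law of large numbers and the uniform law for (relative) log-likelihoods of HMMs, as developed in \citep{doumouryd2004}, but carried out for the whole family of perturbed models \eqref{alg1pertHMM} simultaneously and supplemented with the $\epsilon \to 0$ limit. First I would write the relative log-likelihood as an additive functional. Decomposing
\[
\log p_{\theta}^{\epsilon}(\hat{Y}_{1:n}\vert X_0 = x) = \sum_{k=1}^{n} \log p_{\theta}^{\epsilon}(\hat{Y}_k \vert \hat{Y}_{1:k-1}, X_0 = x),
\]
the normalizing factors $1/\nu(B^\epsilon_{\hat{Y}_k})$ in \eqref{EqnPertCondLaw} appear identically for $\theta$ and $\theta^\ast$ and hence cancel in the relative quantity; this is precisely what lets one bypass the failure of mutual absolute continuity noted in the preceding discussion. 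The candidate limit will be $l^{\epsilon}(\theta) = \bar{\mathbb{E}}_{\theta^\ast}[\Delta^\epsilon_0(\theta)]$, where $\Delta^\epsilon_k(\theta)$ denotes the relative incremental conditional log-likelihood with the conditioning pushed back to the infinite past.

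The core analytic input is geometric forgetting. Assumption (A3) furnishes the two-sided bounds $\underline{c}_1 \le q_\theta \le \overline{c}_1$ on the hidden transition density, a Doeblin-type minorization giving uniform (in $\theta$ and $\epsilon$) exponential forgetting of the predictive and filtering recursions in total variation. I would use this twofold: to discard the dependence on the initialization $X_0 = x$ at a geometric rate, simultaneously for all $x \in \mathcal{X}$, and to show that $\log p_{\theta}^{\epsilon}(\hat{Y}_k \vert \hat{Y}_{1:k-1})$ is Cauchy as the conditioning is extended backward, so that the infinite-past increments $\Delta^\epsilon_k(\theta)$ are well defined. Under $\bar{\mathbb{P}}_{\theta^\ast}$ the observation process is stationary and, by the assumed positive Harris recurrence, ergodic, so $\{\Delta^\epsilon_k(\theta)\}_k$ is a stationary ergodic sequence; Birkhoff's ergodic theorem then yields the a.s.\ convergence of $\frac{1}{n} \sum_{k=1}^n \Delta^\epsilon_k(\theta)$ to the constant $l^\epsilon(\theta)$, and forgetting transfers this to the finite-$n$, $X_0=x$ average appearing in the first line of \eqref{eq:ABcMLESurf}.

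To obtain uniformity in $\theta$ and to handle the derivatives, I would first establish, using (A2) and (A4), that the increments $\Delta^\epsilon_k(\theta)$ are differentiable in $\theta$ with gradients bounded uniformly in $k$; combined with compactness of $\Theta$ this upgrades the pointwise a.s.\ convergence to uniform a.s.\ convergence. For the first and second $\theta$-derivatives I would differentiate the filtering recursions, expressing $\nabla_\theta \log p_\theta^\epsilon(\hat{Y}_k\vert\cdots)$ and $\nabla^2_\theta \log p_\theta^\epsilon(\hat{Y}_k\vert\cdots)$ through Fisher/Louis-type identities as conditional expectations of score and observed-information terms. The same forgetting estimate applies to these, while the moment bounds (A4) and (A6) supply the integrability needed both for Birkhoff and for interchanging $\nabla_\theta$ with the limit in $n$; this delivers the second and third lines of \eqref{eq:ABcMLESurf}, and since the limits of the derivative sequences are themselves the derivatives of $l^\epsilon$, the asserted twice continuous differentiability follows.

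The main obstacle is the final claim, the convergence $l^\epsilon \to l^0$ (and of its first two derivatives) as $\epsilon \to 0$. Pointwise, $g^\epsilon_\theta(y\vert x)$ is a Lebesgue average of $g_\theta(\cdot\vert x)$ over $B^\epsilon_y$, so $g^\epsilon_\theta(y\vert x) \to g_\theta(y\vert x)$ as $\epsilon\to 0$ for $\nu$-a.e.\ $y$ by the Lebesgue differentiation theorem; the difficulty is that $l^\epsilon(\theta)$ is built from an expectation of an infinite-past conditional that is itself a limit in $n$, so one must interchange $\lim_{\epsilon\to 0}$ with both the infinite-past limit and $\bar{\mathbb{E}}_{\theta^\ast}$. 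This is the precise sense, flagged in the discussion above, in which the Lebesgue differentiation theorem has to be shown to persist under the asymptotic-in-$n$ limit. The plan is to make the forgetting rate and the increment bounds uniform in $\epsilon$ (legitimate since (A3) is $\epsilon$-free and the averaging in \eqref{EqnPertCondLaw} preserves the upper bound $\overline{c}_1$), so that $\epsilon \mapsto \Delta^\epsilon_0(\theta)$ is continuous at $\epsilon = 0$ and dominated convergence applies to pass to the limit inside the expectation; the equicontinuity in $\theta$ already established then promotes this to the uniform-in-$\theta$ convergence of $l^\epsilon$ and its first two derivatives.
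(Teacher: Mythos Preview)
Your proposal is correct and follows essentially the same route as the paper, which packages the argument into three lemmas (existence and regularity of $l^\epsilon$; then $\nabla_\theta l^\epsilon\to\nabla_\theta l^0$ and $\nabla^2_\theta l^\epsilon\to\nabla^2_\theta l^0$) using precisely the ingredients you name: telescoping increments, the Douc--Moulines--Ryd\'en forgetting bounds, the ergodic theorem, Fisher/Louis identities for the derivatives, equicontinuity from a uniform bound on the next-higher derivative, and the Lebesgue differentiation theorem combined with $\epsilon$-uniform forgetting for the $\epsilon\to 0$ limit. The only organizational difference is that the paper works with $\nabla_\theta,\nabla^2_\theta,\nabla^3_\theta$ of $\frac{1}{n}\log p^\epsilon_\theta$ first (where the relative subtraction is vacuous) and then integrates back via a standard ``uniform convergence of derivatives plus convergence at one point'' lemma to recover $l^\epsilon$ itself, rather than starting from the relative log-likelihood as you do.
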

We can now use Theorem \ref{secABCABCBayesPostthm} to analyse  ABC based parameter estimators by comparing their the asymptotic behaviour (encapsulated in the surfaces $l^{\epsilon} ( \theta )$) to the asymptotic behaviour of estimators based on using the true value of the likelihood (which is encapsulated in the surface $l^{0} ( \theta )$).  we shall start by analysing the behaviour of the ABC MLE which we formally define below.

\begin{procedure}
[ABC MLE]\label{algABCMLE} Given $\epsilon>0$ and data $\hat{Y}_{1},\ldots,\hat{Y}_{n}$, estimate $\theta^{\ast}$ with
\begin{equation} \label{ABCstandestimatorargsupdiffeq999999999999111}
\hat{\theta}^{\epsilon}_{n} = \arg \max_{\theta\in\Theta} \mathbb{P}_{\theta}\left(  Y_{1} \in B_{\hat{Y}_{1}}^{\epsilon} , \ldots,
Y_{n} \in B_{\hat{Y}_{n}}^{\epsilon} \right) .
\end{equation}
\end{procedure}
Using Theorem \ref{secABCABCBayesPostthm} we can now establish the following biased asymptotic consistency and normality type properties of the ABC MLE whose proofs are deferred to Appendix C.

\begin{theorem} \label{thmABCMLEconsistency}
Suppose that one has a collection of HMMs parameterized by some parameter
vector $\theta\in\Theta$ that satisfy assumptions (A1)-(A6).  Let data $\hat{Y}_{1},\ldots,\hat{Y}_{n}$
generated by the HMM corresponding to an unknown parameter vector
$\theta^{\ast}$ be given and suppose that we use the ABC MLE to estimate the value of $\theta^{\ast}$.  Then for every $\epsilon > 0$  there exists a collection of sets $\mathcal{T}^{\epsilon}$ such that  for all initial conditions $X_{0}$ the set of accumulation points of the ABC MLE $\hat{\theta}^{\epsilon}_{n} $ lies $\bar{\mathbb{P}}_{\theta^{\ast}}$ a.s.~in $\mathcal{T}^{\epsilon}$ and 
\begin{equation} \label{ABCMLEmaxset}
\lim_{\epsilon \to 0} \sup_{\theta \in \mathcal{T}^{\epsilon}} \left\vert \theta - \theta^{\ast} \right\vert = 0.
\end{equation}
Furthermore let $l^{0} ( \theta )$ be as in Theorem \ref{secABCABCBayesPostthm}.  If $\nabla_{\theta}^{2} l^{0}\left(\theta^{\ast}\right)$ is strictly negative definite then for sufficiently small values of $\epsilon$ the set $\mathcal{T}^{\epsilon}$ consists of a singleton $\theta^{\ast,\epsilon}$. 
\end{theorem}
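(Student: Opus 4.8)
The plan is to define the candidate limit sets directly in terms of the limiting surfaces supplied by Theorem~\ref{secABCABCBayesPostthm}. Specifically, for each $\epsilon \geq 0$ set
\[
\mathcal{T}^{\epsilon} \triangleq \arg\max_{\theta \in \Theta} l^{\epsilon}(\theta),
\]
which is a nonempty compact subset of $\Theta$ since $l^{\epsilon}$ is continuous and $\Theta$ is compact. The first task is to show that the accumulation points of the ABC MLE lie in $\mathcal{T}^{\epsilon}$. Because the normalising term $\log p^{\epsilon}_{\theta^{\ast}}(\hat{Y}_{1:n} \vert X_{0} = x)$ does not depend on $\theta$, the estimator $\hat{\theta}^{\epsilon}_{n}$ of Procedure~\ref{algABCMLE} also maximises the relative log-likelihood
\[
M_n(\theta) \triangleq \tfrac{1}{n}\big( \log p^{\epsilon}_{\theta}(\hat{Y}_{1:n} \vert X_{0} = x) - \log p^{\epsilon}_{\theta^{\ast}}(\hat{Y}_{1:n} \vert X_{0} = x) \big).
\]
By Theorem~\ref{secABCABCBayesPostthm}, $M_n \to l^{\epsilon}$ uniformly on $\Theta$, $\bar{\mathbb{P}}_{\theta^{\ast}}$ a.s., for every fixed $x$. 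A standard argmax-consistency argument then applies: if $\hat{\theta}^{\epsilon}_{n_k} \to \bar{\theta}$ along a subsequence, uniform convergence combined with $M_{n_k}(\hat{\theta}^{\epsilon}_{n_k}) \geq M_{n_k}(\theta)$ for all $\theta$ yields $l^{\epsilon}(\bar{\theta}) \geq l^{\epsilon}(\theta)$ for all $\theta$, so $\bar{\theta} \in \mathcal{T}^{\epsilon}$. Since the limit $l^{\epsilon}$ is independent of $x$, the set $\mathcal{T}^{\epsilon}$ is the same for all initial conditions.

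Next I would establish \eqref{ABCMLEmaxset}. The key input here is that $l^{0}$ is uniquely maximised at $\theta^{\ast}$: by the classical identification of the limiting relative log-likelihood as (minus) a relative entropy rate, $l^{0}(\theta) \leq 0 = l^{0}(\theta^{\ast})$ with equality if and only if $\bar{\mathbb{P}}_{\theta}(\cdots) = \bar{\mathbb{P}}_{\theta^{\ast}}(\cdots)$, which by assumption (A1) forces $\theta = \theta^{\ast}$. Given this, \eqref{ABCMLEmaxset} follows from the uniform convergence $l^{\epsilon} \to l^{0}$ as $\epsilon \to 0$ (Theorem~\ref{secABCABCBayesPostthm}) by a stability argument: were $\sup_{\theta \in \mathcal{T}^{\epsilon}} |\theta - \theta^{\ast}|$ not to vanish, one could extract $\epsilon_k \to 0$ and $\theta_k \in \mathcal{T}^{\epsilon_k}$ with $\theta_k \to \bar{\theta} \neq \theta^{\ast}$; passing to the limit in $l^{\epsilon_k}(\theta_k) \geq l^{\epsilon_k}(\theta^{\ast})$ gives $l^{0}(\bar{\theta}) \geq l^{0}(\theta^{\ast})$, contradicting the uniqueness of the maximiser of $l^{0}$.

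The main obstacle is the final singleton claim, and this is where the strict negative definiteness of $\nabla^{2}_{\theta} l^{0}(\theta^{\ast})$ is used; the idea is to convert local strict concavity into global uniqueness of the maximiser. By continuity of $\nabla^{2}_{\theta} l^{0}$ there is a closed ball $N = B^{r}_{\theta^{\ast}}$ on which $\nabla^{2}_{\theta} l^{0}$ remains strictly negative definite, with largest eigenvalue bounded above by some $-c < 0$. Since $\nabla^{2}_{\theta} l^{\epsilon} \to \nabla^{2}_{\theta} l^{0}$ uniformly in $\theta$ as $\epsilon \to 0$, there is $\epsilon_0 > 0$ such that for all $\epsilon < \epsilon_0$ the Hessian $\nabla^{2}_{\theta} l^{\epsilon}(\theta)$ is strictly negative definite for every $\theta \in N$; hence $l^{\epsilon}$ is strictly concave on the convex set $N$ and admits at most one maximiser there. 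On the other hand, by \eqref{ABCMLEmaxset} we may shrink $\epsilon_0$ further so that $\mathcal{T}^{\epsilon} \subset N$ for all $\epsilon < \epsilon_0$. Combining the two facts, for $\epsilon < \epsilon_0$ the set $\mathcal{T}^{\epsilon}$ lies in a convex region on which $l^{\epsilon}$ is strictly concave, so it cannot contain two distinct points; being also nonempty, it is a singleton $\theta^{\ast,\epsilon}$. The delicate points to verify carefully are that the negative definiteness extends to a genuinely $\epsilon$-uniform neighbourhood (handled by the uniform-in-$\theta$ convergence of the Hessians in Theorem~\ref{secABCABCBayesPostthm}) and that the two shrinking thresholds — one forcing strict concavity on $N$, the other forcing $\mathcal{T}^{\epsilon}$ into $N$ — can be taken consistently, which they can since both are driven by the single limit $\epsilon \to 0$.
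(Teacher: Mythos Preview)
Your proposal is correct and follows essentially the same route as the paper: define $\mathcal{T}^{\epsilon}$ as the argmax set of $l^{\epsilon}$, use the uniform convergence from Theorem~\ref{secABCABCBayesPostthm} for the argmax-consistency step and for \eqref{ABCMLEmaxset}, and then use uniform convergence of the Hessians together with \eqref{ABCMLEmaxset} to trap $\mathcal{T}^{\epsilon}$ inside a region of strict concavity. You are in fact more careful than the paper on one point it leaves implicit, namely the identification of $\theta^{\ast}$ as the \emph{unique} maximiser of $l^{0}$ via (A1), which is precisely what is needed for the contradiction argument behind \eqref{ABCMLEmaxset}.
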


\begin{remark} \label{FIremark}
The quantity $-\nabla_{\theta}^{2} l^{0}\left(\theta^{\ast}\right)$ is equal to the asymptotic Fisher information $I$ of the HMM.  For more details see \citep{doumouryd2004}.
\end{remark}

\begin{theorem} \label{thmABCMLEnormality}
Suppose that one has a collection of HMMs parameterized by some parameter
vector $\theta\in\Theta$ that satisfy assumptions (A1)-(A6) and that $\nabla_{\theta}^{2} l^{0}\left(\theta^{\ast}\right)$ is strictly negative definite where $l^{0} ( \theta )$ is as in Theorem \ref{secABCABCBayesPostthm}.  Let data $\hat{Y}_{1},\ldots,\hat{Y}_{n}$
generated by the HMM corresponding to an unknown parameter vector
$\theta^{\ast}$ be given and suppose that we use the ABC MLE to estimate the value of $\theta^{\ast}$.  Then  for sufficiently small values of $\epsilon$ there exists strictly positive definite matricies $J_{\epsilon}, I_{\epsilon}$ such that $\bar{\mathbb{P}}_{\theta^{\ast}}$ a.s. 
\begin{equation}
\sqrt{n}  \left( \hat{\theta}_{n,\epsilon} - \theta^{\ast,\epsilon} \right) \to N (0, I^{-1}_{\epsilon} J_{\epsilon} I^{-1}_{\epsilon} ) .
\end{equation}
Furthermore $J_{\epsilon}, I_{\epsilon} \to I$ as $\epsilon \to 0$ where $I$ is as in Remark \ref{FIremark}.
\end{theorem}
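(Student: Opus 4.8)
The plan is to follow the classical route to asymptotic normality of an M-estimator, specialised to the misspecified HMM that the ABC perturbation produces, so that the limiting covariance acquires the ``sandwich'' form $I_{\epsilon}^{-1} J_{\epsilon} I_{\epsilon}^{-1}$ familiar from estimation under model misspecification (cf.~\citep{whi1982}). First I would invoke Theorem \ref{thmABCMLEconsistency}: for $\epsilon$ small enough the accumulation set $\mathcal{T}^{\epsilon}$ reduces to the singleton $\{\theta^{\ast,\epsilon}\}$, so $\hat{\theta}_{n,\epsilon} \to \theta^{\ast,\epsilon}$, $\bar{\mathbb{P}}_{\theta^{\ast}}$ a.s., for every initial condition. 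Since $\theta^{\ast,\epsilon} \to \theta^{\ast}$ and $\theta^{\ast}$ is interior to $\Theta$ by (A1), the point $\theta^{\ast,\epsilon}$ is itself interior for all sufficiently small $\epsilon$; hence eventually $\hat{\theta}_{n,\epsilon}$ lies in the interior of $\Theta$ and the first-order condition $\nabla_{\theta} \log p^{\epsilon}_{\hat{\theta}_{n,\epsilon}}(\hat{Y}_{1},\ldots,\hat{Y}_{n} \vert X_{0}=x) = 0$ holds.

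A first-order Taylor expansion of the score about $\theta^{\ast,\epsilon}$, with the Hessian taken in integral (mean-value) form along the segment joining $\hat{\theta}_{n,\epsilon}$ to $\theta^{\ast,\epsilon}$, then yields
\[
\sqrt{n}\,(\hat{\theta}_{n,\epsilon} - \theta^{\ast,\epsilon}) = -\Big( \tfrac{1}{n} \nabla_{\theta}^{2} \log p^{\epsilon}_{\tilde{\theta}_{n}}(\hat{Y}_{1},\ldots,\hat{Y}_{n} \vert X_{0}=x) \Big)^{-1} \tfrac{1}{\sqrt{n}}\, \nabla_{\theta} \log p^{\epsilon}_{\theta^{\ast,\epsilon}}(\hat{Y}_{1},\ldots,\hat{Y}_{n} \vert X_{0}=x),
\]
with $\tilde{\theta}_{n}$ on the segment, so $\tilde{\theta}_{n} \to \theta^{\ast,\epsilon}$. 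By the third limit in Theorem \ref{secABCABCBayesPostthm} the normalised Hessian converges uniformly in $\theta$ to $\nabla_{\theta}^{2} l^{\epsilon}(\theta)$, so the bracketed matrix tends to $\nabla_{\theta}^{2} l^{\epsilon}(\theta^{\ast,\epsilon}) =: -I_{\epsilon}$. The final assertion of Theorem \ref{secABCABCBayesPostthm} ($\nabla_{\theta}^{2} l^{\epsilon} \to \nabla_{\theta}^{2} l^{0}$ uniformly), combined with $\theta^{\ast,\epsilon} \to \theta^{\ast}$ and the assumed strict negative definiteness of $\nabla_{\theta}^{2} l^{0}(\theta^{\ast})$, shows that $I_{\epsilon}$ is strictly positive definite for all small $\epsilon$ and that $I_{\epsilon} \to I$.

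The crux is a central limit theorem for the score $\tfrac{1}{\sqrt{n}} \nabla_{\theta} \log p^{\epsilon}_{\theta^{\ast,\epsilon}}(\hat{Y}_{1},\ldots,\hat{Y}_{n})$ under $\bar{\mathbb{P}}_{\theta^{\ast}}$. I would write it as the telescoping sum $\sum_{k=1}^{n} \nabla_{\theta} \log p^{\epsilon}_{\theta^{\ast,\epsilon}}(\hat{Y}_{k} \vert \hat{Y}_{1:k-1}, X_{0}=x)$ and approximate each increment by a stationary version $\Delta^{\epsilon}_{k}$, a measurable functional of the doubly infinite past $\hat{Y}_{-\infty:k}$. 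The approximation error is controlled by the geometric forgetting of the filter, which follows from the uniform two-sided bounds on $q_{\theta}$ in (A3) (these give a Doeblin-type mixing condition and hence exponential forgetting, uniform in $\theta$ and in the perturbation). Because $\theta^{\ast,\epsilon}$ maximises $l^{\epsilon}$ we have $\nabla_{\theta} l^{\epsilon}(\theta^{\ast,\epsilon}) = \lim_{n} \tfrac{1}{n} \nabla_{\theta} \log p^{\epsilon}_{\theta^{\ast,\epsilon}}(\hat{Y}_{1},\ldots,\hat{Y}_{n}) = 0$, so the increments are asymptotically centred, and the moment bounds required by a CLT for stationary ergodic, strongly mixing sequences are precisely those supplied by (A6), whose $\sup_{z \in B_{0}^{K}}$ formulation is tailored to dominate the derivatives of the perturbed conditional density $g^{\epsilon}_{\theta}$ uniformly over the $\epsilon$-ball. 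This gives $\tfrac{1}{\sqrt{n}} \nabla_{\theta} \log p^{\epsilon}_{\theta^{\ast,\epsilon}}(\hat{Y}_{1},\ldots,\hat{Y}_{n}) \to N(0, J_{\epsilon})$ in distribution, where $J_{\epsilon} = \sum_{j \in \mathbb{Z}} \bar{\mathbb{E}}_{\theta^{\ast}}[\Delta^{\epsilon}_{0} \ast \Delta^{\epsilon}_{j}]$ is the (positive definite) long-run covariance. I expect this step to be the main obstacle: unlike the correctly specified case, the increments $\Delta^{\epsilon}_{k}$ are \emph{not} a martingale difference sequence, since the perturbation and the evaluation at $\theta^{\ast,\epsilon} \neq \theta^{\ast}$ break the usual information identity, so the martingale CLT is unavailable and one must instead control the entire long-run variance of a dependent sequence and prove that the stationarity approximation is negligible uniformly in $n$.

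Slutsky's lemma then combines the two displays to give $\sqrt{n}\,(\hat{\theta}_{n,\epsilon} - \theta^{\ast,\epsilon}) \to N(0, I_{\epsilon}^{-1} J_{\epsilon} I_{\epsilon}^{-1})$ in distribution. It remains to show $J_{\epsilon} \to I$ as $\epsilon \to 0$. At $\epsilon = 0$ the model is correctly specified at $\theta^{\ast,0} = \theta^{\ast}$, so the stationary conditional score increments \emph{do} form a martingale difference sequence, the cross terms in the long-run covariance vanish, and the information identity yields $J_{0} = I$ with $I$ as in Remark \ref{FIremark}. The convergence $J_{\epsilon} \to J_{0}$ then follows from the continuity in $\epsilon$ of the stationary increments and of their covariances, established term by term using (A6) to dominate and the forgetting bounds from (A3) to pass to the limit. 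Together with $I_{\epsilon} \to I$ from the second paragraph this establishes the stated convergence $J_{\epsilon}, I_{\epsilon} \to I$ and completes the proof.
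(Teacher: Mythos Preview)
Your proposal is correct and follows essentially the same route as the paper: reduce via the Taylor expansion and Slutsky to a CLT for the normalised score at $\theta^{\ast,\epsilon}$, approximate the score increments by stationary functionals of the infinite past using exponential filter forgetting, and recover $J_{0}=I$ from the information identity at $\epsilon=0$.

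The one point worth sharpening is precisely the step you flag as the main obstacle. You invoke a ``CLT for stationary ergodic, strongly mixing sequences,'' but the stationary increments $\Delta^{\epsilon}_{k}$ are functionals of the \emph{infinite} past $\hat{Y}_{-\infty:k}$, and mixing coefficients of the underlying HMM do not transfer cleanly to such functionals. The paper sidesteps this by verifying a projective (Gordin-type) condition instead: it shows
\[
\bar{E}_{\theta^{\ast}}\Big[\big\vert \bar{E}_{\theta^{\ast}}[\Delta^{\epsilon}_{i} \mid Y_{-\infty:0}] - \bar{E}_{\theta^{\ast}}[\Delta^{\epsilon}_{i} \mid Y_{-\infty:-1}]\big\vert^{2}\Big] \leq K\rho^{i},
\]
which follows from the same filter-forgetting estimates you cite (this is Lemma~\ref{doumourydProp42}), and then applies the CLT of \citep{vol1993} for stationary sequences under such conditions. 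This yields the long-run variance $J_{\epsilon}$ directly. For $J_{\epsilon}\to I$, rather than bounding the autocovariance series term by term, the paper represents $J_{\epsilon}=\lim_{n} n^{-1}\bar{E}_{\theta^{\ast}}\big[\vert\nabla_{\theta}\log p^{\epsilon}_{\theta^{\ast,\epsilon}}(Y_{1:n})\vert^{2}\big]$ with the limit uniform in $\epsilon$, and shows the finite-$n$ expectations converge as $\epsilon\to 0$ by the Lebesgue-differentiation arguments of Lemma~\ref{AppAABCLLHLim1Deriv}; this is your continuity step made concrete and avoids having to control the infinite sum.
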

Next we consider the properties of the ABC Bayesian parameter estimator which we define below.

\begin{procedure}
[ABC Bayesian Estimator]\label{algABCBayes} Given $\epsilon>0$ a prior distribution $\pi_{0}$ and data $\hat{Y}_{1},\ldots,\hat{Y}_{n}$ estimate $\theta^{\ast}$ via the ABC posterior
\begin{equation} \label{eq:PostBayesABC}
\pi_{n}^{\epsilon} \propto \mathbb{P}_{\theta}\left(  Y_{1} \in B_{\hat{Y}_{1}}^{\epsilon} , \ldots,
Y_{n} \in B_{\hat{Y}_{n}}^{\epsilon} \right) \pi_{0} .
\end{equation}
\end{procedure}
Given Theorem \ref{secABCABCBayesPostthm} we can easily see that the ABC Bayesian estimator satisfies the following Bernstein-Von Mises type theorem, see \citep{borkalrao1971} whose proof is again deferred to Appendix C.

\begin{theorem} \label{secABCPostBernVMthm} Suppose that the assumptions of Theorem \ref{thmABCMLEnormality} hold and that one  tries to infer the true value of $\theta^{\ast}$ using the ABC approximate Bayesian posterior \eqref{eq:PostBayesABC}.   Suppose further that the prior distribution has a continuous density w.r.t.~Lebesgue measure, then for
sufficiently small values of $\epsilon$ one has that $\bar{\mathbb{P}}_{\theta^{\ast}}$ a.s.
\begin{equation}
\pi^{\epsilon}_{n} \left( \sqrt{n}  ( \theta - \hat{\theta}_{n, \epsilon} )  \right) \to N \left( 0,  I_{\epsilon}^{-1} \right)
\end{equation}
where $I_{\epsilon}$ is as in Theorem \ref{thmABCMLEnormality}.
\end{theorem}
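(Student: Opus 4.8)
The plan is to carry out a Laplace / Bernstein-von Mises argument driven entirely by the uniform surface convergence of Theorem \ref{secABCABCBayesPostthm} together with the uniqueness of the ABC MLE limit point supplied by Theorems \ref{thmABCMLEconsistency} and \ref{thmABCMLEnormality}. Throughout I would fix $\epsilon$ small enough that $\mathcal{T}^{\epsilon} = \{\theta^{\ast,\epsilon}\}$ is a singleton and that $I_{\epsilon} := -\nabla_{\theta}^{2} l^{\epsilon}(\theta^{\ast,\epsilon})$ is strictly positive definite (both possible by Theorems \ref{thmABCMLEconsistency} and \ref{thmABCMLEnormality} together with the uniform convergence $\nabla_{\theta}^{2} l^{\epsilon} \to \nabla_{\theta}^{2} l^{0}$ and the negative definiteness of $\nabla_{\theta}^{2} l^{0}(\theta^{\ast})$). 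I then work on the probability-one event on which $\hat{\theta}_{n,\epsilon} \to \theta^{\ast,\epsilon}$ and on which the three convergences of \eqref{eq:ABcMLESurf} hold uniformly in $\theta$. Reparametrising by $u = \sqrt{n}(\theta - \hat{\theta}_{n,\epsilon})$, the density of the pushed-forward posterior is proportional to $h_{n}(u) = \pi_{0}(\hat{\theta}_{n,\epsilon} + u/\sqrt{n})\exp(L_{n}(u))$, where $L_{n}(u) = \log p^{\epsilon}_{\hat{\theta}_{n,\epsilon}+u/\sqrt{n}}(\hat{Y}_{1:n}) - \log p^{\epsilon}_{\hat{\theta}_{n,\epsilon}}(\hat{Y}_{1:n})$.

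First I would establish pointwise convergence of $h_{n}$. Since $\hat{\theta}_{n,\epsilon}$ maximises the ABC likelihood and lies in the interior of $\Theta$ for large $n$, the gradient $\nabla_{\theta} \log p^{\epsilon}_{\hat{\theta}_{n,\epsilon}}(\hat{Y}_{1:n})$ vanishes, so a second-order Taylor expansion gives $L_{n}(u) = \tfrac{1}{2} u^{\top} [\tfrac{1}{n} \nabla_{\theta}^{2} \log p^{\epsilon}_{\tilde{\theta}_{n}}(\hat{Y}_{1:n})] u$ for some $\tilde{\theta}_{n}$ on the segment joining $\hat{\theta}_{n,\epsilon}$ and $\hat{\theta}_{n,\epsilon}+u/\sqrt{n}$. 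For fixed $u$ one has $\tilde{\theta}_{n} \to \theta^{\ast,\epsilon}$, so by the uniform convergence of the Hessian surface in \eqref{eq:ABcMLESurf} and continuity of $\nabla_{\theta}^{2} l^{\epsilon}$ the bracketed term tends to $\nabla_{\theta}^{2} l^{\epsilon}(\theta^{\ast,\epsilon}) = -I_{\epsilon}$; hence $L_{n}(u) \to -\tfrac{1}{2} u^{\top} I_{\epsilon} u$. Continuity of the prior density together with $\hat{\theta}_{n,\epsilon}\to\theta^{\ast,\epsilon}$ gives $\pi_{0}(\hat{\theta}_{n,\epsilon}+u/\sqrt{n}) \to \pi_{0}(\theta^{\ast,\epsilon})$, so $h_{n}(u) \to \pi_{0}(\theta^{\ast,\epsilon})\exp(-\tfrac{1}{2} u^{\top} I_{\epsilon} u)$ for every $u$.

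To promote this to convergence of the normalised posterior I would obtain an integrable, $n$-uniform envelope for $h_{n}$ and then annihilate the far tails. Choosing $\delta>0$ small enough that $\nabla_{\theta}^{2} l^{\epsilon} \preceq -\tfrac{1}{2} I_{\epsilon}$ on $B_{\delta}(\theta^{\ast,\epsilon})$, the uniform Hessian convergence yields, for all large $n$, the bound $L_{n}(u) \le -c|u|^{2}$ on the region $|u|\le \delta\sqrt{n}$ for some $c>0$, furnishing the Gaussian dominating function needed to apply dominated convergence over this growing region. On the complement $|\theta - \theta^{\ast,\epsilon}|>\delta$, uniqueness of the maximiser forces $\sup_{|\theta-\theta^{\ast,\epsilon}|>\delta} l^{\epsilon}(\theta) < l^{\epsilon}(\theta^{\ast,\epsilon})$, and the uniform convergence of $\tfrac{1}{n}(\log p^{\epsilon}_{\theta} - \log p^{\epsilon}_{\hat{\theta}_{n,\epsilon}})$ then makes the posterior mass on this set decay exponentially, so its rescaled contribution (the part of the $u$-integral with $|u|>\delta\sqrt{n}$) vanishes. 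Combining the two regions shows the normalising constant $\int h_{n}$ converges to $\pi_{0}(\theta^{\ast,\epsilon})(2\pi)^{d/2}\det(I_{\epsilon})^{-1/2}$, after which Scheff\'e's lemma upgrades the pointwise density convergence to convergence in total variation of $\pi^{\epsilon}_{n}(\sqrt{n}(\theta - \hat{\theta}_{n,\epsilon}))$ to $N(0, I_{\epsilon}^{-1})$.

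I expect the tail / uniform-integrability step to be the main obstacle: the delicate point is that the quadratic envelope must hold simultaneously for all $n$ over the region $|u|\le\delta\sqrt{n}$, whose radius grows with $n$, so one cannot merely invoke pointwise Hessian convergence but must exploit the genuinely uniform-in-$\theta$ statement of Theorem \ref{secABCABCBayesPostthm}; and one must separately verify the exponential negligibility of the posterior outside $B_{\delta}(\theta^{\ast,\epsilon})$ using the strict separation of the limiting surface. This last point is exactly where the hard-won uniqueness of the limit point --- the improvement of the present paper over \citep{deasinjaspet2010} --- is indispensable, since without a unique strict maximiser of $l^{\epsilon}$ the posterior could retain mass away from $\theta^{\ast,\epsilon}$ and no single Gaussian limit would emerge.
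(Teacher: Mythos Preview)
Your proposal is correct and takes essentially the same approach as the paper: the paper's own proof is a single sentence deferring to ``standard Bernstein--von Mises type arguments'' (citing \citep{borkalrao1971}), and what you have written is precisely a careful execution of that standard argument, driven by the uniform surface convergence of Theorem~\ref{secABCABCBayesPostthm} and the uniqueness of the maximiser from Theorems~\ref{thmABCMLEconsistency}--\ref{thmABCMLEnormality}. Your account in fact supplies the details the paper omits, including the tail control via strict separation of $l^{\epsilon}$ away from $\theta^{\ast,\epsilon}$ and the Gaussian envelope on the expanding central region.
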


\subsection{Asymptotic Rates of Convergence}

Theorems \ref{thmABCMLEconsistency}, \ref{thmABCMLEnormality} and \ref{secABCPostBernVMthm}
show that asymptotically  ABC based parameter estimators concentrate around a point $\theta^{\ast, \epsilon} \neq \theta^{\ast}$  and thus that the asymptotic bias will be of order $\left|\theta^{\ast,\epsilon}-\theta^{\ast}\right|$.  It is natural to ask at what rate does $\theta^{\ast,\epsilon} \to \theta^{\ast}$ as $\epsilon \to 0$.  We begin our answer to this question with the following example.

\begin{example}
Let $\pi_{1} $ be the distribution on the set of diadic numbers of the form $\frac{1}{4^{k}}; \, k = 0 , 1 , \ldots$ given by $\pi_{1} ( \frac{1}{4^{k}} ) =  \frac{3}{4^{k+1}} $ for all $k$ and let $\pi_{2} $ be the distribution on the set of diadic numbers of the form $\frac{1}{2\cdot4^{k}}$ given by $\pi_{2} ( \frac{1}{2 \cdot 4^{k}} ) =  \frac{3}{4^{k+1}} $ for all $k = 0 , 1 , \ldots$.  Furthermore let $\left\{ \pi_{\theta} \right\}_{\theta \in [0.25,0.75]}$ be the set of distributions defined such that for all $\theta$, $\pi_{\theta} = \theta \pi_{1} + (1- \theta ) \pi_{2}$.  

It is clear that the distributions $\pi_{\theta}$ satisfy the conditions of Theorem  \ref{secABCABCBayesPostthm} and hence that for any $\epsilon$ the limiting approximate mean log likelihood surface $l^{\epsilon}( \theta)$ exists and is well defined. Further if we assume that the true value of the parameter is equal to $\theta^{\ast} = \frac{1}{2}$ then it is easy to show that $\nabla^{2}_{\theta} l^{0} (\theta^{\ast}) \neq 0$ and that for all $k \geq 0$ that $\nabla_{\theta} l^{\frac{1}{4^{k+1}}} ( \theta^{\ast} ) = \frac{3}{4^{k+2}}$ from which it follows that 
\begin{equation*}
\theta^{\ast,\frac{1}{ 4^{k+1}}} -\theta^{\ast}=  \frac{1}{\nabla^{2}_{\theta} l^{0} (\theta^{\ast}) } \frac{3}{4^{k+2}} + o \left( \frac{1}{4^{k+1}} \right) .
\end{equation*}
\end{example}

The above example shows that in the general case one should expect that the size of the asymptotic bias will be at least $O (\epsilon)$.  The next theorem shows that the behaviour of the asymptotic bias will be no worse than this.  In order for it to hold we need to make the following Lipschitz assumptions.

 \begin{hypA} 7
There exists some $R > 0$ such that for all $\epsilon \leq R$.
\begin{equation} \label{eq:CondObsStateDeriv}
\begin{gathered}
E_{\theta^{\ast}}\left[\sup_{x \in \mathcal{X}} \sup_{\theta \in \Theta}  \sup_{z  \in B^{0}_{\epsilon}}  \left\vert \frac{    \nabla_{z} g_{\theta}\left( Y + z \vert x \right) }{ g_{\theta}\left( Y  \vert x \right) }  \right\vert^{2}  \right] , \\
E_{\theta^{\ast}}\left[\sup_{x \in \mathcal{X}} \sup_{\theta \in \Theta}  \sup_{z  \in B^{0}_{\epsilon}}  \left\vert \frac{  \nabla_{z} \left(  \nabla_{\theta}  g_{\theta}\left( Y + z \vert x \right) \right) }{ g_{\theta}\left( Y  \vert x \right) }  \right\vert^{2}  \right]  < \infty .
\end{gathered}
\end{equation}
 \end{hypA}

\begin{theorem} \label{secABCsmallepsthmGenCase}

Suppose that in addition to  all of  the assumptions
of Theorem \ref{secABCPostBernVMthm}  one has that assumption (A7) above also holds.  Then 
\begin{equation}
\left\vert \theta^{\epsilon,\ast}-\theta^{\ast}  \right\vert  = O ( \epsilon ) . \label{eq:ABCBayesLim2ndOrdPert}
\end{equation}
\end{theorem}
Moreover, if the dominating measure $\nu$ is Lebesgue measure then one can show, under slightly stronger Lipschitz assumptions, that the asymptotic error in the ABC parameter estimate is of order $O ( \epsilon )^{2}$.

 \begin{hypA} 8
There exists some $R > 0$ such that for all $\epsilon \leq R$.
\begin{equation} \label{eq:CondObsStateDeriv}
\begin{gathered}
E_{\theta^{\ast}}\left[\sup_{x \in \mathcal{X}} \sup_{\theta \in \Theta}  \sup_{z  \in B^{0}_{\epsilon}}  \left\vert \frac{    \nabla_{z}^{2} g_{\theta}\left( Y + z \vert x \right) }{ g_{\theta}\left( Y  \vert x \right) }  \right\vert^{2}  \right] , \\
E_{\theta^{\ast}}\left[\sup_{x \in \mathcal{X}} \sup_{\theta \in \Theta}  \sup_{z  \in B^{0}_{\epsilon}}  \left\vert \frac{  \nabla_{z}^{2} \left(  \nabla_{\theta}  g_{\theta}\left( Y + z \vert x \right) \right) }{ g_{\theta}\left( Y  \vert x \right) }  \right\vert^{2}  \right]  < \infty .
\end{gathered}
\end{equation}
 \end{hypA}

\begin{theorem} \label{secABCsmallepsthmLeb}

Suppose that $\nu$ is Lebesgue measure and that in addition to  all of  the assumptions
of Theorem \ref{secABCsmallepsthmGenCase}  one has that assumption (A8) above holds also.  Then 
\begin{equation}
\left\vert \theta^{\epsilon,\ast}-\theta^{\ast}  \right\vert  = O ( \epsilon^{2} ) . \label{eq:ABCBayesLebLim2ndOrdPert}
\end{equation}
\end{theorem}
The proofs of Theorems \ref{secABCsmallepsthmGenCase} and \ref{secABCsmallepsthmLeb} are deferred to Appendix D.  Finally we note that in the case that $\nu$ is Lebesgue measure we have from Theorems \ref{thmABCMLEnormality} and \ref{secABCPostBernVMthm} that the variance of ABC based based estimators is of order $O ( 1/ \sqrt{n} )$ while their bias is of order $O ( \epsilon^{2} )$.  It follows that (at least in theory) it is optimal to scale $\epsilon$ as $O ( 1 / \sqrt[4]{n} )$ as $n$ goes to infinity.  Intriguingly this is the same rate as the optimal bandwidth in kernel density estimation (see for example \citep{wanjon1995}).  This suggests an alternative interpretation of ABC as approximating the likelihood via a kind of kernel density based estimate.

\section{Summary} \label{sec:summary}

In this paper we have shown that the framework developed in \citep{deasinjaspet2010} to analyse the behaviour of the the  ABC MLE can be extended to provide a rigourous analysis  of the behaviour of ABC based estimators in both the Bayesian and frequentist contexts.  In particular we have shown that ABC based parameter estimators satisfy results analogous to the asymptotic consistency, asymptotic normality and Bernstein-von Mises theorems for standard parameter estimators but that the ABC estimators are asymptotically biased.  Furthermore we have shown that this asymptotic bias can be made arbitrarily small by choosing a sufficiently small value of the parameter $\epsilon$.   Together these theoretical resultshelp to solidify and extend existing intuition and provide a rigourous theoretical justification for ABC based parameter estimation procedures.

\renewcommand{\thesection}{A}
\renewcommand{\theequation}{A-\arabic{equation}}

\section*{Appendix A: Auxillary Results}

In this section we present without proof some well known results that will be needed in the proofs of Theorems \ref{secABCABCBayesPostthm}, \ref{thmABCMLEconsistency}, \ref{thmABCMLEnormality}, \ref{secABCPostBernVMthm} and \ref{secABCsmallepsthmGenCase}.  The first two lemmas are standard
result from real analysis.

\begin{lemma} \label{lemrealanalresstand2} 
Let a connected compact set $G \subset \mathbb{R}^{u}$ and some constant $K>0$ be given.  Suppose that there exists
a continuous function $f: G \to\mathbb{R}^{v}$ and sequence
of continuous  functions $f_{n}: G \to\mathbb{R}^{v}$,
$n\geq1$, such that for all $n$ the function $f_{n}$ is Lipschitz-$K$ continuous. Then $f_{n} \to f$ uniformly in $G$ if and only if $f_{n} \to f$ pointwise on a countable dense subset of  $G$.
\end{lemma}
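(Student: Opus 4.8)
The plan is to prove the two directions separately, the forward implication being trivial and the reverse implication carrying all the content. The forward direction is immediate: if $f_n \to f$ uniformly on $G$ then in particular $f_n \to f$ pointwise at every point of $G$, hence on any countable dense subset $D \subseteq G$. So I would dispatch this in a single line and concentrate on the converse.

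For the converse, suppose $f_n \to f$ pointwise on a countable dense set $D \subseteq G$, with each $f_n$ being Lipschitz-$K$. The key observation is that the uniform Lipschitz bound transfers to the limit: for any $x, y \in D$ we have $\left| f_n(x) - f_n(y) \right| \leq K \left| x - y \right|$, and letting $n \to \infty$ gives $\left| f(x) - f(y) \right| \leq K \left| x - y \right|$ on $D$; since $f$ is assumed continuous and $D$ is dense, this Lipschitz-$K$ bound in fact holds for all $x, y \in G$. The next step is a standard $\epsilon/3$ argument exploiting compactness. Fix $\eta > 0$. Because $G$ is compact, cover it by finitely many balls of radius $\delta = \eta/(3K)$ centered at points $x_1, \ldots, x_N$; since $D$ is dense, I may take each $x_i \in D$. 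For each $i$, pointwise convergence at $x_i$ gives an $M_i$ with $\left| f_n(x_i) - f(x_i) \right| < \eta/3$ for $n \geq M_i$; set $M = \max_i M_i$.

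The final step assembles these pieces uniformly. Given any $x \in G$, choose $i$ with $\left| x - x_i \right| < \delta$. Then for $n \geq M$,
\begin{equation*}
\left| f_n(x) - f(x) \right| \leq \left| f_n(x) - f_n(x_i) \right| + \left| f_n(x_i) - f(x_i) \right| + \left| f(x_i) - f(x) \right|,
\end{equation*}
where the first term is at most $K \left| x - x_i \right| < K\delta = \eta/3$ by the Lipschitz-$K$ property of $f_n$, the second is at most $\eta/3$ by the choice of $M$, and the third is at most $K\delta = \eta/3$ by the Lipschitz bound on $f$ established above. Hence $\left| f_n(x) - f(x) \right| < \eta$ for all $x \in G$ and all $n \geq M$, which is exactly uniform convergence.

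The only genuine subtlety, and the step I would flag as the main obstacle, is the transfer of the Lipschitz constant to the limit function $f$ on all of $G$: one gets the bound on $D$ for free from the limit, but extending it to $G$ requires the stated continuity of $f$ together with density of $D$. The connectedness of $G$ is not strictly needed for this norm-based argument, though it would matter if one measured distances intrinsically along $G$; here I would simply use the ambient Euclidean metric and the finite subcover from compactness, so the argument goes through cleanly.
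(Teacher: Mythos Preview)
Your proof is correct. The paper does not actually prove this lemma: Appendix~A explicitly says these auxiliary results are ``presented without proof'' as standard real-analysis facts, so there is nothing to compare against. Your $\epsilon/3$ argument using a finite $\delta$-net of points from the dense set $D$, together with the transfer of the Lipschitz-$K$ bound to $f$ via density and continuity, is exactly the expected route; your observation that connectedness of $G$ plays no role is also correct.
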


\begin{lemma} \label{lemrealanalresstand} Let a connected compact set $G \subset \mathbb{R}^{u}$ be given and suppose that there exists
a continuous function $g: G \to\mathbb{R}^{v}$ and sequence
of continuously differentiable functions $f_{n}: G \to\mathbb{R}^{v}$,
$n\geq1$, such that $\nabla f_{n}(z) \to g(z)$  uniformly in $z$ and $f_{n}(z^{\ast})$ is Cauchy for some $z^{\ast} \in G$.
Then there exists a uniformly bounded and  continuously differentiable function $f$ such that $ f_{n}(z) \to f(z)$ 
uniformly in $z$ and $\nabla f(z) = g(z)$.
\end{lemma}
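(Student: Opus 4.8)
The plan is to prove this as the multivariate analogue of the classical theorem on differentiating a uniform limit (e.g.~Rudin, \emph{Principles of Mathematical Analysis}, Thm.~7.17). The entire argument rests on one estimate: the uniform convergence of the gradients controls the oscillation of the differences $f_{n}-f_{m}$, so that once the sequence is pinned down at the single point $z^{\ast}$ it is forced to be uniformly Cauchy. Concretely, for any two points $z,w\in G$ joined by a rectifiable path $\gamma$ lying in $G$, the fundamental theorem of calculus gives
\[
\big( f_{n}(z) - f_{m}(z) \big) - \big( f_{n}(w) - f_{m}(w) \big) = \int_{\gamma} \big( \nabla f_{n} - \nabla f_{m} \big),
\]
and hence, taking $w = z^{\ast}$ and bounding the path length by a constant $L$ independent of $z$ (see below),
\[
\sup_{z \in G} \big| f_{n}(z) - f_{m}(z) \big| \leq \big| f_{n}(z^{\ast}) - f_{m}(z^{\ast}) \big| + L \sup_{\zeta \in G} \big\| \nabla f_{n}(\zeta) - \nabla f_{m}(\zeta) \big\|.
\]

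First I would use this to establish uniform convergence. The first term on the right vanishes as $n,m\to\infty$ because $f_{n}(z^{\ast})$ is Cauchy, and the second because $\nabla f_{n}\to g$ uniformly and is therefore uniformly Cauchy. Thus $\{f_{n}\}$ is uniformly Cauchy and converges uniformly to a limit $f$; being a uniform limit of continuous functions on the compact set $G$, the function $f$ is continuous and hence uniformly bounded, which gives the boundedness claim.

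Next I would identify $\nabla f = g$. Passing to the limit $n\to\infty$ in the integral identity $f_{n}(z) - f_{n}(w) = \int_{0}^{1} \nabla f_{n}\big(w + t(z-w)\big)(z-w)\,dt$ along a segment contained in $G$, and using that both $f_{n}\to f$ and $\nabla f_{n}\to g$ uniformly (so the limit may be taken inside the integral), yields $f(z) - f(w) = \int_{0}^{1} g\big(w + t(z-w)\big)(z-w)\,dt$. Differentiating this identity at $w$ and invoking the continuity of $g$ gives $\nabla f(w) = g(w)$; since $g$ is continuous, $f$ is continuously differentiable, which completes the proof modulo the geometric input.

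That geometric input is the main obstacle: the estimates above require every point of $G$ to be joined to $z^{\ast}$ by a rectifiable path \emph{lying in $G$} of length bounded by a constant $L$ independent of the endpoint, since the gradient estimate is only uniform on $G$ and must be integrated along a path that stays in $G$. A general connected compact set need not even be path-connected, so some mild regularity of $G$ is needed here; this is implicit in the statement and harmless in the applications, where $G$ is the parameter space $\Theta$. Indeed, taking $\Theta$ convex (or, more generally, quasiconvex, so that any two points are joined by a path of length at most a fixed multiple of their Euclidean distance) one simply uses straight segments with $L$ equal to the diameter of $\Theta$, and the preceding estimates go through verbatim. Where only connectedness is available, I would recover a uniform $L$ by a covering-and-chaining argument: cover $G$ by finitely many balls on which $G$ is locally path-connected, and use connectedness of $G$ to bound the number of balls one must traverse to reach $z^{\ast}$, thereby bounding the total path length uniformly.
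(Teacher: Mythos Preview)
The paper does not actually prove this lemma: it is listed in Appendix~A among the ``well known results'' that are explicitly ``present[ed] without proof,'' so there is no argument in the paper to compare against. Your proof is the standard one (indeed the reference you cite, Rudin~7.17, is the canonical source), and the route via the fundamental theorem of calculus along paths, uniform Cauchyness, and passing to the limit under the integral is exactly what one expects here.

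Your caveat about the geometry of $G$ is well taken and worth flagging. A connected compact subset of $\mathbb{R}^{u}$ need not be path-connected (the closure of the topologist's sine curve is the standard counterexample), so the lemma as stated is not literally true without some additional regularity on $G$. Your proposed fix---assuming $G$ is convex, or at least quasiconvex so that any two points are joined by a path in $G$ of length bounded by a fixed multiple of their distance---is the right one, and it is harmless for the paper's purposes since the only instance of $G$ that ever arises is the parameter space $\Theta$. The final ``covering-and-chaining'' suggestion, however, does not rescue the bare connectedness hypothesis: local path-connectedness is not automatic for compact connected sets, and even when it holds there is no reason the chain lengths should be uniformly bounded. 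So that last paragraph should be dropped or replaced by an explicit extra hypothesis on $G$.
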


Lemmas \ref{doumourydProp4}, \ref{doumourydProp41} and \ref{doumourydProp42} are essentially corollaries and extensions of Propositions 4 and 5 in \citep{doumouryd2004} and may be proved in exactly the same manner.  We leave the details to the reader.

\begin{lemma} \label{doumourydProp4}

Suppose that one has a collection of HMMs parameterised some vectors
$\theta\in\Theta$ that satisfy assumption (A2).  Furthermore suppose that one has a HMM $\left\{ X_{k}, Y_{k}\right\} _{k\geq1}$, defined on the same state spaces as the parameterised collection of HMMs, which satisfies assumption (A2) with the same values of $\underline{c}$ and $\overline{c}$.   

Given measurable functions $\phi_{1}, \phi_{2}, \phi_{3}: \Theta \times\mathcal{X}^{2} \times\mathcal{Y}\to\mathbb{R}$
 and  $y \in \mathcal{Y}$, $k<l$ and $s \in \left\{ 1,2,3 \right\} $ define the following functions of the HMM $\left\{ X_{k}, Y_{k}\right\} _{k\geq1}$
\begin{align*}
\phi_{s;k:l}(\theta) \triangleq \sum_{i=k+1}^{l} \phi_{s} \left(\theta, X_{i-1},X_{i},Y_{i} \right) 
\end{align*}
and for  any $n>0$ define the random variables $\Delta_{0,n}$, $\Gamma_{0,n}$, $\Psi_{0,n}$ and $\Omega_{0,n}$ by
\begin{align*}
\Delta_{0,n} (\theta  ) \triangleq E_{\theta} \Big[ \phi_{1;-n:0}(\theta) \big\vert Y_{-n:0} \Big] - E_{\theta} \Big[ \phi_{1;-n:-1}(\theta) \big\vert Y_{-n:-1} \Big],
\end{align*}
\begin{align*}
& \Gamma_{0,n} (\theta ) \triangleq E_{\theta} \Big[ \phi_{1;-n:0}(\theta) \phi_{2;-n:0}(\theta)\big\vert Y_{0:-n} \Big] - E_{\theta} \Big[  \phi_{1;-n:-1}(\theta) \phi_{2;-n:-1}(\theta) \big\vert Y_{-n:-1} \Big]  \\
& \qquad \qquad  + E_{\theta} \Big[ \phi_{1;-n:-1}(\theta) \big\vert Y_{-n:-1} \Big]  E_{\theta} \Big[ \phi_{2;-n:-1}(\theta) \big\vert Y_{-n:-1} \Big] \\
& \qquad \qquad \qquad \qquad \qquad - E_{\theta} \Big[ \phi_{1;-n:0}(\theta) \big\vert Y_{-n:-0} \Big]  E_{\theta} \Big[ \phi_{2;-n:0}(\theta) \big\vert Y_{-n:-0} \Big]  ,
\end{align*}
\begin{align*}
& \Psi_{0,n} (\theta ) \triangleq E_{\theta} \Big[ \phi_{1;-n:0}(\theta) \phi_{2;-n:0}(\theta)\big\vert Y_{0:-n} \Big] E_{\theta} \Big[ \phi_{3;-n:0}(\theta) \big\vert Y_{-n:-0} \Big]  \\
& \qquad - E_{\theta} \Big[ \phi_{1;-n:0}(\theta) \big\vert Y_{-n:-0} \Big]  E_{\theta} \Big[ \phi_{2;-n:0}(\theta) \big\vert Y_{-n:-0} \Big]  E_{\theta} \Big[ \phi_{3;-n:0}(\theta) \big\vert Y_{-n:-0} \Big]   \\
& \quad + E_{\theta} \Big[ \phi_{1;-n:-1}(\theta) \big\vert Y_{-n:-1} \Big]  E_{\theta} \Big[ \phi_{2;-n:-1}(\theta) \big\vert Y_{-n:-1} \Big] E_{\theta} \Big[ \phi_{3;-n:-1}(\theta) \big\vert Y_{-n:-1} \Big]  \\
&  \qquad \quad - E_{\theta} \Big[  \phi_{1;-n:-1}(\theta) \phi_{2;-n:-1}(\theta) \big\vert Y_{-n:-1} \Big] E_{\theta} \Big[ \phi_{3;-n:-1}(\theta) \big\vert Y_{-n:-1} \Big]  ,
\end{align*}
and 
\begin{align*}
& \Omega_{0,n} (\theta) \triangleq E_{\theta} \Big[ \phi_{1;-n:0}(\theta) \phi_{2;-n:0}(\theta) \phi_{3;-n:0}(\theta) \big\vert Y_{-n:-0} \Big]  \\
& \qquad - E_{\theta} \Big[ \phi_{1;-n:0}(\theta) \big\vert Y_{-n:-0} \Big]  E_{\theta} \Big[ \phi_{2;-n:0}(\theta) \big\vert Y_{-n:-0} \Big]  E_{\theta} \Big[ \phi_{3;-n:0}(\theta) \big\vert Y_{-n:-0} \Big]   \\
& \quad + E_{\theta} \Big[ \phi_{1;-n:-1}(\theta) \big\vert Y_{-n:-1} \Big]  E_{\theta} \Big[ \phi_{2;-n:-1}(\theta) \big\vert Y_{-n:-1} \Big] E_{\theta} \Big[ \phi_{3;-n:-1}(\theta) \big\vert Y_{-n:-1} \Big]  \\
&  \qquad \quad - E_{\theta} \Big[  \phi_{1;-n:-1}(\theta) \phi_{2;-n:-1}(\theta) \phi_{3;-n:-1}(\theta) \big\vert Y_{-n:-1} \Big]  .
\end{align*}
Then there exist $\sigma (Y_{-\infty : 0 }) $ measurable random variables $\Delta_{0,\infty}( \theta  )$, $\Gamma_{0,\infty}( \theta  )$, $\Psi_{0,\infty}( \theta  )$ and $\Omega_{0,\infty}( \theta  )$ and constants $C < \infty$ and $0 < \rho < 1$ which depend only on $\underline{c}$ and $\overline{c}$ such that for any initial condition on the collection of parameterised HMMs
\begin{equation} \label{eq:LemdoumourydPropBounds6}
\begin{gathered}
 \bar{E}  \left[ \sup_{ \theta \in \Theta } \left| \Delta_{0,n} ( \theta  ) - \Delta_{0,\infty} ( \theta  )  \right|\right] \leq C \rho^{n} \bar{E}  \Big[ \left\Vert \phi_{1} \right\Vert_{\infty} \Big]   \\
 \bar{E}  \left[ \sup_{\theta \in \Theta} \left|\Gamma_{0,n} ( \theta  ) - \Gamma_{0,\infty} ( \theta   )   \right|\right ] \leq C \rho^{n}  \sup_{s \in \left\{ 1 , 2 \right\} } \bar{E}  \Big[ \left\Vert \phi_{s} \right\Vert_{\infty}^{2}  \Big]   \\
 \bar{E}  \left[ \sup_{\theta \in \Theta} \left|\Psi_{0,n} ( \theta  ) - \Psi_{0,\infty} ( \theta  )   \right|\right] \leq C \rho^{n}  \sup_{s \in \left\{ 1 , 2 , 3 \right\} }  \bar{E}   \Big[ \left\Vert \phi_{s} \right\Vert_{\infty}^{3}  \Big]   \\
 \bar{E}  \left[ \sup_{\theta \in \Theta} \left|\Omega_{0,n} ( \theta  ) - \Omega_{0,\infty} ( \theta  )   \right|\right]  \leq C \rho^{n} \sup_{s \in \left\{ 1 , 2 , 3 \right\} } \bar{E}   \Big[ \left\Vert \phi_{s} \right\Vert_{\infty}^{3}  \Big]
\end{gathered}
\end{equation}
where  for all   $s \in \left\{ 1,2,3 \right\} $
\begin{align*}
\left\Vert \phi_{s} \right\Vert_{\infty} (y) \triangleq \sup_{ \theta \in \Theta}\sup_{x, x^{\prime} \in \mathcal{X}}\left|\phi_{s} \left(\theta,x,x^{\prime} ,y \right)\right|  
\end{align*}
$\bar{E} \left[ \cdot \right]$ denotes expectation w.r.t.~the law and stationary law respectively of the process $\left\{ X_{k}, Y_{k}\right\} _{k\geq1}$.
 \end{lemma}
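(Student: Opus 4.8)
My plan is to follow the strategy of Propositions 4 and 5 of \citep{doumouryd2004}, whose engine is the uniform exponential forgetting of the conditional (smoothing) law of the hidden chain. The first step I would take is to record this forgetting property: under the uniform two-sided bounds $\underline{c}\leq q_{\theta}(x,x')\leq\overline{c}$ the transition kernel satisfies a Doeblin condition, so there exist $C<\infty$ and $0<\rho<1$, depending only on $\underline{c},\overline{c}$, such that for any two initial laws and any two windows of conditioning observations the smoothing marginals of $X_{i}$ differ in total variation by at most $C\rho^{r}$, where $r$ is the distance from $i$ to the boundary at which the two conditionings disagree; crucially these constants are uniform in $\theta$ and in the observed sequence. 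The remainder of the lemma is then an exercise in combining this estimate with the algebraic structure of $\Delta,\Gamma,\Psi,\Omega$ and a telescoping/Cauchy argument in the window length $n$.

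For $\Delta_{0,n}$ I would first isolate the single new summand at time $0$, namely $E_{\theta}[\phi_{1}(\theta,X_{-1},X_{0},Y_{0})\mid Y_{-n:0}]$, and then write the remainder as $\sum_{i=-n+1}^{-1}\bigl(E_{\theta}[\phi_{1}(\theta,X_{i-1},X_{i},Y_{i})\mid Y_{-n:0}]-E_{\theta}[\phi_{1}(\theta,X_{i-1},X_{i},Y_{i})\mid Y_{-n:-1}]\bigr)$, that is, the change in the smoother of each past summand caused by appending the single observation $Y_{0}$. Forgetting bounds the $i$-th such change by $C\rho^{|i|}\|\phi_{1}\|_{\infty}(Y_{i})$, so the remainder is dominated by a geometric series and $\Delta_{0,n}$ is uniformly bounded. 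Extending the left boundary from $-n$ to $-\infty$ perturbs this (time-$0$-localised) quantity only through observations at distance $\geq n$, which by the same estimate costs $C\rho^{n}$; this both produces the $\sigma(Y_{-\infty:0})$-measurable limit $\Delta_{0,\infty}$ as a Cauchy limit and, after taking $\sup_{\theta}$ (the constants being $\theta$-uniform) and then expectation, delivers the stated bound $\bar{E}[\sup_{\theta}|\Delta_{0,n}-\Delta_{0,\infty}|]\leq C\rho^{n}\bar{E}[\|\phi_{1}\|_{\infty}]$.

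The quantities $\Gamma,\Psi,\Omega$ are, after regrouping, precisely the increments of the conditional second and third cumulants of the additive functionals; for instance $\Gamma_{0,n}$ equals $\mathrm{Cov}_{\theta}(\phi_{1;-n:0},\phi_{2;-n:0}\mid Y_{-n:0})-\mathrm{Cov}_{\theta}(\phi_{1;-n:-1},\phi_{2;-n:-1}\mid Y_{-n:-1})$, and $\Psi,\Omega$ are the analogous third-cumulant increments. The relevant input is then the geometric decay of conditional covariances and third cumulants of summands at separated times, for example $|\mathrm{Cov}_{\theta}(\phi_{1}(\cdot,Y_{i}),\phi_{2}(\cdot,Y_{j})\mid Y_{-n:0})|\leq C\rho^{|i-j|}\|\phi_{1}\|_{\infty}(Y_{i})\|\phi_{2}\|_{\infty}(Y_{j})$, which again follows from forgetting. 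This turns the a priori $O(n^{2})$ (resp.\ $O(n^{3})$) double (resp.\ triple) sums into banded sums with geometrically decaying off-diagonal weight, and the cumulant increments localise near the boundary exactly as $\Delta$ did, yielding Cauchy limits and the $C\rho^{n}$ rates with the factors $\sup_{s}\bar{E}[\|\phi_{s}\|_{\infty}^{2}]$ and $\sup_{s}\bar{E}[\|\phi_{s}\|_{\infty}^{3}]$.

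I expect the main obstacle to be the third-order terms $\Psi$ and $\Omega$: one must verify that the specific linear combinations defining them are arranged so that the bulk contributions cancel and only boundary contributions of size $O(\rho^{n})$ survive, and this requires a careful third-cumulant forgetting estimate controlling triple products of the sums $\phi_{s;-n:l}$ uniformly in $\theta$. Once that estimate is in hand the telescoping and Cauchy arguments are identical to the first-order case, and the $\sigma(Y_{-\infty:0})$-measurability of the limits together with the $\theta$-uniformity of the bounds is inherited directly from the corresponding properties of the forgetting constants, which depend only on $\underline{c}$ and $\overline{c}$ and not on the initial condition.
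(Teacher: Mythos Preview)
Your proposal is correct and takes essentially the same approach as the paper: the paper gives no independent proof of this lemma, stating only that it is a corollary and extension of Propositions 4 and 5 of \citep{doumouryd2004} and ``may be proved in exactly the same manner,'' which is precisely the forgetting-plus-telescoping argument you outline. Your identification of $\Gamma$, $\Psi$, $\Omega$ as increments of conditional cumulants and the localisation-near-the-boundary mechanism is the right way to carry out the extension to the third-order terms that the paper leaves to the reader.
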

 
 \begin{lemma}  \label{doumourydProp41}
 Suppose that the assumptions of Lemma \ref{doumourydProp4} all hold.  Then there exist constants $C < \infty$ and $0 < \rho < 1$ such that for any initial condition on the collection of parameterised HMMs
 \begin{align}
  \bar{E}  \left[ \sup_{ \theta \in \Theta } \left| \Delta_{0,n} ( \theta  ) - \Delta_{0,\infty} ( \theta  )  \right|^{2} \right] \leq C \rho^{n} \bar{E}   \Big[ \left\Vert \phi_{1} \right\Vert_{\infty}^{2}  \Big]  .   \label{doumourydProp41eq}
 \end{align}
 \end{lemma}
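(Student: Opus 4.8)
The plan is to reuse, essentially verbatim, the pathwise forgetting estimate that already underlies Lemma \ref{doumourydProp4} and to upgrade only the moment bookkeeping from the first to the second moment; the limiting variable $\Delta_{0,\infty}(\theta)$ itself is supplied by Lemma \ref{doumourydProp4}, so nothing new about its existence needs to be proved. The proof of Lemma \ref{doumourydProp4}, following Propositions 4 and 5 of \citep{doumouryd2004}, rests on the fact that assumption (A3) endows the joint chain with a uniform Doeblin-type mixing property: the two-sided bound on $q_{\theta}$ forces the Dobrushin contraction coefficient of the associated smoother to be bounded away from $1$ uniformly in $\theta \in \Theta$, in the observation record, and in the initial condition. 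Consequently the conditional expectations defining $\Delta_{0,n}(\theta)$ forget both their initialisation and the observations before time $-n$ geometrically fast; assumption (A2) ensures that $\theta \mapsto \Delta_{0,n}(\theta)$ is continuous, so that the supremum over the compact set $\Theta$ is measurable and the resulting bounds are genuinely uniform in $\theta$ and in the initial condition.

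Concretely, first I would extract from that argument the pathwise estimate
\begin{equation*}
\sup_{\theta \in \Theta} \left| \Delta_{0,n}(\theta) - \Delta_{0,\infty}(\theta) \right| \;\leq\; C \rho^{n} \, G_{n}, \qquad \bar{\mathbb{P}} \text{ a.s.},
\end{equation*}
valid for every initial condition with $C < \infty$ and $0 < \rho < 1$ depending only on the constants of (A3), where $G_{n}$ is a geometrically weighted sum of the stationary envelopes $\left\Vert \phi_{1} \right\Vert_{\infty}(Y_{i})$, $i \leq 0$, whose weights sum to a finite constant. The prefactor $\rho^{n}$ records the geometric decay of the influence of the observations prior to time $-n$, while the summable weights inside $G_{n}$ record the localisation of the increment $\Delta_{0,n}$ about time $0$. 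Taking $\bar{E}$ of this bound and using $\bar{E}[G_{n}] \leq (1-\rho)^{-1} \bar{E}[\left\Vert \phi_{1} \right\Vert_{\infty}]$ (which holds by Minkowski's inequality and stationarity) is exactly what produces the first inequality of \eqref{eq:LemdoumourydPropBounds6}.

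To obtain \eqref{doumourydProp41eq} I would square the displayed pathwise bound and take expectations, so that it suffices to control $\bar{E}[G_{n}^{2}]$ uniformly in $n$. Applying Minkowski's inequality to the sum defining $G_{n}$ and then using the stationarity identity $\bar{E}[\left\Vert \phi_{1} \right\Vert_{\infty}(Y_{i})^{2}] = \bar{E}[\left\Vert \phi_{1} \right\Vert_{\infty}^{2}]$ gives
\begin{equation*}
\bar{E}\big[ G_{n}^{2} \big]^{1/2} \;\leq\; \sum_{i \leq 0} \rho^{|i|} \, \bar{E}\big[ \left\Vert \phi_{1} \right\Vert_{\infty}(Y_{i})^{2} \big]^{1/2} \;=\; \Big( \sum_{i \leq 0} \rho^{|i|} \Big) \, \bar{E}\big[ \left\Vert \phi_{1} \right\Vert_{\infty}^{2} \big]^{1/2},
\end{equation*}
so that $\bar{E}[G_{n}^{2}] \leq (1-\rho)^{-2} \bar{E}[\left\Vert \phi_{1} \right\Vert_{\infty}^{2}]$ uniformly in $n$. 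Combining this with the square of the pathwise bound yields $\bar{E}[\sup_{\theta \in \Theta} | \Delta_{0,n}(\theta) - \Delta_{0,\infty}(\theta) |^{2}] \leq C^{2}(1-\rho)^{-2} \rho^{2n} \bar{E}[\left\Vert \phi_{1} \right\Vert_{\infty}^{2}]$, and since $\rho^{2n} \leq \rho^{n}$ the claim \eqref{doumourydProp41eq} follows after relabelling the constant $C$.

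The only genuinely delicate ingredient is the pathwise forgetting estimate in the first display, which is where the uniform mixing furnished by (A3) does all the work. This, however, is precisely the estimate already established in the proof of Lemma \ref{doumourydProp4} (equivalently in Propositions 4 and 5 of \citep{doumouryd2004}), so I expect no fresh analysis of the filter or smoother to be required. Relative to the $L_{1}$ statement the only additional step is the passage from the first to the second moment of the envelope $G_{n}$, which is the routine Minkowski-plus-stationarity computation above; because the localisation weights are summable, the second moment is controlled at the same geometric rate $\rho^{n}$ (indeed $\rho^{2n}$) as the first moment, which is the content of the bound.
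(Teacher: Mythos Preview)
Your proposal is correct and aligned with the paper's treatment: the paper gives no explicit proof of this lemma, stating only that it is an extension of Propositions 4 and 5 of \citep{doumouryd2004} and ``may be proved in exactly the same manner,'' with details left to the reader. Your plan---extracting the pathwise forgetting bound (driven by the uniform mixing of (A3)) that already underlies the $L_{1}$ estimate in Lemma \ref{doumourydProp4}, and then upgrading the moment bookkeeping from first to second via Minkowski plus stationarity---is precisely the natural way to carry this out, and is consistent with the paper's deferral to \citep{doumouryd2004}.
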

 
  \begin{lemma}  \label{doumourydProp42}
 Let the same assumptions and notation as Lemma \ref{doumourydProp4} be given.  Then  there exist constants $C < \infty$ and $0 < \rho < 1$  such that for any $k,n$
 \begin{align}
  \bar{E}  \bigg[ \Big\vert  \bar{E} \left[  \Delta_{0,n} ( \theta  ) \vert Y_{-\infty:-k} \right]  -  \bar{E} \left[  \Delta_{0,n} ( \theta  ) \vert Y_{-\infty:-k-1} \right]  \Big\vert^{2} \bigg]   \leq C \rho^{k}   \bar{E}    \Big[ \left\Vert \phi_{1} \right\Vert_{\infty}^{2}  \Big]    \label{doumourydProp42eq}
 \end{align}
 where $ \bar{E} \left[ \cdot \vert \cdot \right]$ denotes conditional expectation w.r.t.~the law of the process $\left\{ X_{k}, Y_{k}\right\} _{k\geq1}$.
 \end{lemma}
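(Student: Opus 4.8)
The plan is to recognise the left-hand side of \eqref{doumourydProp42eq} as the squared $L^{2}$-norm of a single reverse-martingale increment and to bound it via the geometric forgetting that the uniform bounds of (A3) confer on both the model and the data-generating HMMs. Since $\sigma(Y_{-\infty:-k}) \supseteq \sigma(Y_{-\infty:-k-1})$, the family $\{\bar{E}[\Delta_{0,n}(\theta)\vert Y_{-\infty:-k}]\}_{k\geq 0}$ is a reverse martingale, and writing $g_{k} \triangleq \bar{E}[\Delta_{0,n}(\theta)\vert Y_{-\infty:-k}]$ the tower property gives
\begin{align*}
\bar{E}\left[\Delta_{0,n}(\theta)\vert Y_{-\infty:-k}\right] - \bar{E}\left[\Delta_{0,n}(\theta)\vert Y_{-\infty:-k-1}\right] = g_{k} - \bar{E}\left[g_{k}\vert Y_{-\infty:-k-1}\right].
\end{align*}
Because $g_{k} - \bar{E}[g_{k}\vert Y_{-\infty:-k-1}]$ is the orthogonal projection of $g_{k}$ onto the orthogonal complement of $L_{2}(\sigma(Y_{-\infty:-k-1}))$, for every $\sigma(Y_{-\infty:-k-1})$-measurable $W \in L_{2}$ one has $\Vert g_{k} - \bar{E}[g_{k}\vert Y_{-\infty:-k-1}] \Vert_{2} \leq \Vert g_{k} - W \Vert_{2}$. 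The problem therefore reduces to exhibiting a single approximant $W$, not depending on $Y_{-k}$, with $\Vert g_{k} - W \Vert_{2}^{2} \leq C \rho^{k} \bar{E}[\Vert \phi_{1} \Vert_{\infty}^{2}]$.

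There are exactly two channels through which $g_{k}$ can depend on the observation $Y_{-k}$, and $W$ will be built by severing both. First, $\Delta_{0,n}(\theta)$ is a fixed function of $Y_{-n:0}$ assembled from the model-$\theta$ smoother and anchored at time $0$: relative to $\phi_{1;-n:-1}$ its only new summand is $\phi_{1}(\theta,X_{-1},X_{0},Y_{0})$, so the observation $Y_{-k}$ enters $\Delta_{0,n}(\theta)$ only as a remote argument of a smoothing functional centred at $0$. Second, $g_{k}$ integrates $\Delta_{0,n}(\theta)$ over the block $Y_{-k+1:0}$ under the data-generating predictive law, which sees $Y_{-k}$ through the filter $\mathrm{Law}(X_{-k}\vert Y_{-\infty:-k})$. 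Accordingly I take $W$ to be $g_{k}$ with the direct argument $Y_{-k}$ frozen at a dummy value and with that filter replaced by $\mathrm{Law}(X_{-k}\vert Y_{-\infty:-k-1})$; the latter is $\sigma(Y_{-\infty:-k-1})$-measurable, so $W\in L_{2}(\sigma(Y_{-\infty:-k-1}))$ as required.

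The crux is that these two forgetting effects act on opposite ends of the integrated block and must be balanced against one another. The model-$\theta$ smoother forgets observations near time $-k$ (so $\Delta_{0,n}(\theta)$ depends only weakly on the early coordinates of the block), whereas the data-generating predictive law forgets the influence of $Y_{-k}$ only on the coordinates near time $0$; neither effect alone controls the full-block discrepancy, since the two predictive laws differ by $O(1)$ on the early coordinates and $\Delta_{0,n}(\theta)$ varies by $O(1)$ on the late ones. The geometric bound $C\rho^{k}$ emerges only after splitting the block and pairing each coordinate's small model-sensitivity with the opposing small predictive-sensitivity. This balancing is precisely the Dobrushin/Hilbert-metric contraction, at a rate $\rho = \rho(\underline{c}_{1},\overline{c}_{1}) < 1$ governed solely by the constants of (A3), applied to the normalised filtering kernels of both the model-$\theta$ and the data-generating chains; since the latter satisfies (A3) with the same constants, the same $\rho$ serves throughout. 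Phrasing every forgetting step through these kernels, exactly as in the proofs of Propositions 4 and 5 of \citep{doumouryd2004}, bounds $\vert g_{k}-W\vert$ pointwise by $C\rho^{k}$ times a $\Vert\phi_{1}\Vert_{\infty}$-type envelope; squaring and taking $\bar{E}$ then yields \eqref{doumourydProp42eq}, the surplus factor $\rho^{k}$ being absorbed into the rate.

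I expect the main obstacle to be this two-layer, opposite-ends structure together with the demand that the constant $C$ and rate $\rho$ be uniform in $n$, in $k$, in the observation realisation and (through the supremum defining $\Vert\phi_{1}\Vert_{\infty}$) in $\theta\in\Theta$. Working with explicit smoother formulas makes the horizon $n$ intrude; the clean route, as in \citep{doumouryd2004}, is to express each step as a contraction of normalised filtering kernels, for which the uniform lower and upper bounds $\underline{c}_{1}\leq q_{\theta}\leq\overline{c}_{1}$ give a contraction coefficient strictly below $1$ independently of $n$, $k$ and the data. This is also the reason the hypotheses of Lemma \ref{doumourydProp4} insist that the data-generating HMM obey the same bounds.
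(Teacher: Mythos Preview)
The paper does not actually prove this lemma: it states that Lemmas \ref{doumourydProp4}, \ref{doumourydProp41} and \ref{doumourydProp42} ``are essentially corollaries and extensions of Propositions 4 and 5 in \citep{doumouryd2004} and may be proved in exactly the same manner,'' and leaves the details to the reader. Your proposal is consistent with this: you invoke precisely the Dobrushin/Hilbert-metric contraction of the normalised filtering kernels under the uniform bounds of (A3), exactly as in those propositions of \citep{doumouryd2004}, and you correctly identify the reverse-martingale increment structure and the $L_{2}$-projection reduction that turns the problem into building a $\sigma(Y_{-\infty:-k-1})$-measurable approximant. So your approach is the one the paper points to, only spelled out in more detail than the paper itself provides.
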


The last Lemma is a statement of the Fisher identity  and the
Louis missing information  principle  (see for example \citep{doumouryd2004})  plus an extension of these results to third order derivatives of the log likelihood function.  Given assumptions (A2)-(A6) it follows from a simple application of the dominated convergence theorem.

\begin{lemma} \label{LemFishInf}
Suppose that assumptions (A2)-(A6) hold for a collection of HMMs parametrised by some vector $\theta \in \Theta$ where for each $\theta \in \Theta$ we let $g_{\theta} \left( y \vert x \right)$ and $q_{\theta}\left(x^{\prime},x \right)$ denote the densities of the conditional law and transition kernel of the corresponding HMM.  For any $\epsilon \geq 0$ let $g_{\theta}^{\epsilon} \left( y \vert x \right)$ denote the density of the conditional law of the corresponding perturbed HMM \eqref{alg1pertHMM}.  By convention we let $g_{\theta}^{0} \left( y \vert x \right) = g_{\theta} \left( y \vert x \right)$.

For any $\theta \in \Theta$, $\epsilon \geq 0$ and $n > 0$ let $\psi (\theta, x, x^{\prime}, y) = \log g_{\theta}^{\epsilon}\left( y \vert x^{\prime} \right)q_{\theta}\left(x,x^{\prime} \right)$ and following the notation of Lemma \ref{doumourydProp4}  let $\psi_{n} (\theta) \triangleq \sum_{i=1}^{n} \psi (\theta, X_{i-1}, X_{i}, Y_{i})$.  Then one has that for any $\theta \in \Theta$ and $\epsilon \geq 0$ the log ABC approximate likelihood function $\log p_{\theta}^{\epsilon} ( \cdots ) $  is three times differentiable and 
\begin{equation}
\nabla_{\theta} \log p_{\theta}^{\epsilon}( Y_{1},\ldots, Y_{n}) = E_{\theta^{\epsilon}}\left[\nabla_{\theta} \psi_{n} (\theta) \big\vert  Y_{1:n}\right],\label{LemFishInf1}
\end{equation}
 \begin{align}
& \nabla_{\theta}^{2}\frac{1}{n}\log p_{\theta}^{\epsilon}( Y_{1},\ldots, Y_{n}) \label{LemFishInf2} \\
& \qquad  = E_{\theta^{\epsilon}}\left[\nabla_{\theta}^{2} \psi_{n} \big\vert  Y_{1:n}\right]  + E_{\theta^{\epsilon}}\left[ \left( \nabla_{\theta} \psi_{n} \right)^{\ast 2}  \big\vert  Y_{1:n}\right]  - E_{\theta^{\epsilon}}\left[\nabla_{\theta} \psi_{n} \big\vert  Y_{1:n}\right]^{\ast 2} \notag ,
 \end{align}
 and 
 \begin{align}
& \nabla_{\theta}^{3}\frac{1}{n}\log p_{\theta}^{\epsilon}( Y_{1},\ldots, Y_{n}) =  E_{\theta^{\epsilon}}\left[\nabla_{\theta}^{3} \psi_{n} \big\vert  Y_{1:n}\right]  \notag \\
& + 3 E_{\theta^{\epsilon}}\left[\nabla_{\theta}^{2} \ast \psi_{n} \nabla_{\theta} \psi_{n}  \big\vert  Y_{1:n}\right]  - 3 E_{\theta^{\epsilon}}\left[\nabla_{\theta}^{2} \psi_{n} \big\vert  Y_{1:n}\right]  \ast E_{\theta^{\epsilon}}\left[  \nabla_{\theta} \psi_{n}  \big\vert  Y_{1:n}\right]  \notag \\
& -  3 E_{\theta^{\epsilon}}\left[\left( \nabla_{\theta} \psi_{n} \right)^{\ast 2} \big\vert  Y_{1:n}\right]  \ast E_{\theta^{\epsilon}}\left[  \nabla_{\theta} \psi_{n}  \big\vert  Y_{1:n}\right]   +  E_{\theta^{\epsilon}}\left[ \left( \nabla_{\theta} \psi_{n} \right)^{\ast 3} \big\vert  Y_{1:n}\right]  \notag \\ 
& + 2 E_{\theta^{\epsilon}}\left[\nabla_{\theta} \psi_{n} \big\vert Y_{1:n}\right]^{\ast 3} \label{LemFishInf3}
 \end{align}
 where $E_{\theta^{\epsilon}}\left[\cdot\vert\cdot\right]$ denotes
conditional expectation w.r.t.~the law of the perturbed HMM \eqref{alg1pertHMM}.
\end{lemma}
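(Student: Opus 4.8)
The plan is to realise the perturbed likelihood as a single integral over the hidden trajectory and then to differentiate three times under the integral sign, recognising the resulting normalised integrals as smoothing expectations under the perturbed HMM. Write $\Lambda(dx_{0:n}) = \pi_{0}(dx_{0})\,\mu(dx_{1:n})$ for the reference measure on the hidden path. By the factorisation leading to \eqref{eq:approx} and the definition of $\psi_{n}$, one has, up to a $\theta$-independent normalising constant that does not affect any $\theta$-derivative,
\[
p_{\theta}^{\epsilon}(Y_{1},\ldots,Y_{n}) = \int_{\mathcal{X}^{n+1}} \exp\big(\psi_{n}(\theta)\big)\,\Lambda(dx_{0:n}),
\]
where $\exp(\psi_{n}(\theta)) = \prod_{k=1}^{n} q_{\theta}(x_{k-1},x_{k})\,g_{\theta}^{\epsilon}(Y_{k}\vert x_{k})$. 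The smoothing law of $X_{0:n}$ given $Y_{1:n}$ under the perturbed HMM \eqref{alg1pertHMM} then has $\Lambda$-density proportional to $\exp(\psi_{n}(\theta))$, so that for any integrable $f$ the identity $E_{\theta^{\epsilon}}[f\vert Y_{1:n}] = \big(\int f\,e^{\psi_{n}}\,d\Lambda\big)\big/\big(\int e^{\psi_{n}}\,d\Lambda\big)$ holds, the $\theta$-independent constants cancelling.

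First I would establish three-times differentiability and the legitimacy of interchanging $\nabla_{\theta}$ with the integral. Since $\mathcal{X}$ and $\Theta$ are compact and, by (A2), $q_{\theta}$ and $g_{\theta}^{\epsilon}$ are three times continuously differentiable in $\theta$, while by (A3) the integrand $e^{\psi_{n}}$ is bounded above by $\overline{c}_{1}^{\,2n}$ and $\Lambda$ is finite, the only quantities that must be dominated are the factors $\nabla_{\theta}^{j}\psi_{n}$ for $j=1,2,3$. The transition-kernel contributions $\nabla_{\theta}^{j}\log q_{\theta}$ are controlled by (A4) together with compactness (which supplies the third-order bound not stated explicitly in (A4)); the observation-density contributions $\nabla_{\theta}^{j}\log g_{\theta}^{\epsilon}(Y_{k}\vert x_{k})$ are handled by passing the $\theta$-derivatives through the convolution defining $g^{\epsilon}$ in \eqref{EqnPertCondLaw}, the ball $B_{y}^{\epsilon}$ there matching the $\sup_{z}$ over $B_{0}^{K}$ in (A6), and using (A5)--(A6) to guarantee that $g_{\theta}^{\epsilon}>0$ and that the logarithmic derivatives are finite and locally bounded in $\theta$. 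A standard dominated-convergence argument applied to the difference quotients then justifies differentiation under the integral up to third order.

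With this in hand the three identities are the chain/quotient rule applied to $\log p_{\theta}^{\epsilon} = \log\int e^{\psi_{n}}\,d\Lambda$, i.e.\ the cumulant expansion of the tilted family with $\Lambda$-density $e^{\psi_{n}}/\int e^{\psi_{n}}\,d\Lambda$. Writing $A_{j}=\nabla_{\theta}^{j}\psi_{n}$ and $\langle\cdot\rangle = E_{\theta^{\epsilon}}[\cdot\vert Y_{1:n}]$, one differentiation gives $\nabla_{\theta}\log p_{\theta}^{\epsilon} = \langle A_{1}\rangle$, which is \eqref{LemFishInf1}. Differentiating $\langle A_{1}\rangle$ via the rule
\[
\nabla_{\theta}\langle f\rangle = \langle\nabla_{\theta}f\rangle + \langle f\ast A_{1}\rangle - \langle f\rangle\ast\langle A_{1}\rangle
\]
yields $\langle A_{2}\rangle + \langle A_{1}^{\ast 2}\rangle - \langle A_{1}\rangle^{\ast 2}$, which is \eqref{LemFishInf2}. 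A further application of the same rule to each of the three resulting terms, together with $\nabla_{\theta}(A_{1}^{\ast 2}) = A_{2}\ast A_{1} + A_{1}\ast A_{2}$ and $\nabla_{\theta}(\langle A_{1}\rangle^{\ast 2}) = 2\big(\langle A_{2}\rangle + \langle A_{1}^{\ast 2}\rangle - \langle A_{1}\rangle^{\ast 2}\big)\ast\langle A_{1}\rangle$, collects the six terms of \eqref{LemFishInf3}.

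The analytic content — domination for the interchange of limit and integral — is, as the surrounding text asserts, a direct consequence of the boundedness in (A3)--(A4) and the integrability in (A5)--(A6), so I expect the differentiation-under-the-integral step to present no real difficulty beyond writing down explicit dominating functions. The genuinely fiddly part, and the step I would expect to be the main obstacle, is the third-order bookkeeping: one must verify that the derivatives of $\langle A_{2}\rangle$, $\langle A_{1}^{\ast 2}\rangle$ and $\langle A_{1}\rangle^{\ast 2}$ combine with exactly the coefficients $(1,3,-3,-3,1,2)$ appearing in \eqref{LemFishInf3}, keeping the outer-product orderings and tensor dimensions consistent throughout.
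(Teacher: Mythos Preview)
Your proposal is correct and matches the paper's own approach: the paper states the lemma without detailed proof, remarking only that it is the Fisher identity and Louis missing information principle extended to third order, and that ``given assumptions (A2)--(A6) it follows from a simple application of the dominated convergence theorem.'' Your argument---differentiation under the integral justified by dominated convergence, followed by the cumulant-expansion bookkeeping---is exactly this, spelled out in full.
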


\renewcommand{\thesection}{B}
\renewcommand{\theequation}{B-\arabic{equation}}

\section*{Appendix B: Proof of Theorem \ref{secABCABCBayesPostthm}} 

Theorem \ref{secABCABCBayesPostthm} is an immediate corollary of  the following three lemmas.

\begin{lemma}\label{AppAABCLLHLimLem}Suppose that assumptions (A1)-(A6) hold for a collection of HMMs parametrised by some vector $\theta \in \Theta$. Then for any $\epsilon \geq 0$ there exists a twice continuously
differentiable function $l^{\epsilon}\left(\theta\right)$ such
that 
\begin{equation} \label{eq:ABCLLHLimLem1}
\begin{gathered}
\lim_{n \to \infty} \sup_{\theta \in \Theta} \left\vert \frac{1}{n}\left(\log p_{\theta}^{\epsilon}( Y_{1},\ldots, Y_{n})-\log p_{\theta^{\ast}}^{\epsilon}( Y_{1},\ldots, Y_{n})\right) - l^{\epsilon}\left(\theta\right) \right\vert = 0 \\
\lim_{n \to \infty} \sup_{\theta \in \Theta} \left\vert \frac{1}{n} \nabla_{\theta}  \left(\log p_{\theta}^{\epsilon}( Y_{1},\ldots, Y_{n})-\log p_{\theta^{\ast}}^{\epsilon}( Y_{1},\ldots, Y_{n})\right) - \nabla_{\theta}  l^{\epsilon}\left(\theta\right) \right\vert = 0 \\
\lim_{n \to \infty} \sup_{\theta \in \Theta} \left\vert \frac{1}{n}  \nabla_{\theta}^{2} \left(\log p_{\theta}^{\epsilon}( Y_{1},\ldots, Y_{n})-\log p_{\theta^{\ast}}^{\epsilon}( Y_{1},\ldots, Y_{n})\right) -  \nabla_{\theta}^{2} l^{\epsilon}\left(\theta\right) \right\vert = 0
\end{gathered}
\end{equation}
$\bar{\mathbb{P}}_{\theta^{\ast}}$ a.s.~and in $L_{1}\left( \bar{\mathbb{P}}_{\theta^{\ast}}\right)$  where for all $\theta$ and $\epsilon$, $p_{\theta}^{\epsilon}( \cdots )$ denotes the  likelihood function of the perturbed HMM \eqref{alg1pertHMM}.  By convention we define $p_{\theta}^{0}( \cdots )$ to be equal to the true likelihood function $p_{\theta}( \cdots )$.
Moreover there exists some constant $0<K<\infty$ such that for all $\theta \in \Theta$ and $\epsilon \geq 0$ 
\begin{equation} \label{eq:ABCLLHLimLem3}
l^{\epsilon}\left(\theta\right) , \nabla_{\theta} l^{\epsilon}\left(\theta\right) , \nabla_{\theta}^{2} l^{\epsilon}\left(\theta\right)  \leq K
\end{equation}
and $l^{\epsilon}\left(\theta\right) , \nabla_{\theta} l^{\epsilon}\left(\theta\right) , \nabla_{\theta}^{2} l^{\epsilon}\left(\theta\right) $ are K-Lipschitz (as functions of $\theta$).
\end{lemma}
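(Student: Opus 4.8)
The plan is to establish the three convergences in \eqref{eq:ABCLLHLimLem1} from the top derivative downward and then stitch them together into a single twice continuously differentiable limit using the real-analysis Lemmas \ref{lemrealanalresstand} and \ref{lemrealanalresstand2}. First I would invoke Lemma \ref{LemFishInf} to write each of $\frac{1}{n}\nabla_{\theta}\log p^{\epsilon}_{\theta}(Y_{1:n})$ and $\frac{1}{n}\nabla^{2}_{\theta}\log p^{\epsilon}_{\theta}(Y_{1:n})$ as a conditional expectation, under the perturbed law at $\theta$, of an additive functional of $\psi(\theta,x,x',y)=\log g^{\epsilon}_{\theta}(y\vert x')q_{\theta}(x,x')$ and its $\theta$-derivatives. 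Because the relative log-likelihood subtracts the $\theta$-independent term $\log p^{\epsilon}_{\theta^{\ast}}$, this term disappears upon differentiation, so it suffices to treat $\frac{1}{n}\nabla^{j}_{\theta}\log p^{\epsilon}_{\theta}$ for $j=1,2$. I would then telescope in the observation index $k$ so that each normalized derivative becomes a Ces\`aro average of increments; after a time shift, the gradient increments are exactly of the $\Delta_{0,k}$ form of Lemma \ref{doumourydProp4} (with $\phi_{1}=\nabla_{\theta}\psi$), while the Hessian increments are a combination of a $\Delta_{0,k}$ term (for $\nabla^{2}_{\theta}\psi$) and a $\Gamma_{0,k}$ term (capturing the conditional covariance of $\nabla_{\theta}\psi$ via the Louis identity).

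The core is an ergodic-theorem argument driven by geometric forgetting. By Lemma \ref{doumourydProp4}, applied with $\phi_{s}$ equal to the relevant derivatives of $\psi$, each increment viewed in the stationary data process under $\bar{\mathbb{P}}_{\theta^{\ast}}$ converges exponentially fast in $L_{1}$ to a $\sigma(Y_{-\infty:0})$-measurable stationary limit $\Delta_{0,\infty}(\theta)$ (respectively $\Gamma_{0,\infty}(\theta)$); the required finite moments of $\|\phi_{s}\|_{\infty}$ come from (A6) for the $g^{\epsilon}_{\theta}$ contribution and from (A4) for the $q_{\theta}$ contribution. Since the data process is positive Harris recurrent with a unique invariant law at $\theta^{\ast}$, Birkhoff's theorem applies to the stationary limits, and combining it with the geometric bound of Lemma \ref{doumourydProp4} shows the Ces\`aro averages converge $\bar{\mathbb{P}}_{\theta^{\ast}}$-a.s.\ and in $L_{1}$ to $\bar{E}_{\theta^{\ast}}[\Delta_{0,\infty}(\theta)]$ and $\bar{E}_{\theta^{\ast}}[\Gamma_{0,\infty}(\theta)]$ for each fixed $\theta$; these define $\nabla_{\theta}l^{\epsilon}(\theta)$ and $\nabla^{2}_{\theta}l^{\epsilon}(\theta)$.

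To upgrade this pointwise convergence to uniform convergence I would apply Lemma \ref{lemrealanalresstand2}, whose hypothesis needs a uniform Lipschitz constant for the approximating functions. I would supply it by bounding the normalized third derivative $\frac{1}{n}\nabla^{3}_{\theta}\log p^{\epsilon}_{\theta}$ uniformly in $n$ and $\theta$ via the third-order identity \eqref{LemFishInf3} together with (A4) and (A6); this makes the normalized Hessian uniformly Lipschitz, while the normalized gradient is uniformly Lipschitz because its derivative, the normalized Hessian, is uniformly bounded by \eqref{LemFishInf2}. Verifying pointwise convergence only on a countable dense subset of $\Theta$ then yields uniform convergence of both the gradient and the Hessian. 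I would assemble the limit by two applications of Lemma \ref{lemrealanalresstand}: the uniform Hessian limit together with the (now convergent) normalized gradient gives a continuously differentiable $\nabla_{\theta}l^{\epsilon}$ whose derivative is the Hessian limit; a second application, using that the relative log-likelihood vanishes identically at $\theta=\theta^{\ast}$ and is therefore trivially Cauchy there, produces a continuously differentiable $l^{\epsilon}$ with gradient equal to the gradient limit, establishing the function-value convergence and twice continuous differentiability. The bound \eqref{eq:ABCLLHLimLem3} and the $\theta$-Lipschitz property follow by passing the uniform-in-$n$ bounds on the first, second and third normalized derivatives to the limit, with the single constant $K$ depending only on $\underline{c}_{1},\overline{c}_{1},\overline{c}_{2}$ and the (A6) moments.

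The main obstacle I anticipate is controlling the derivatives of the perturbed log-density $\log g^{\epsilon}_{\theta}$ and checking that the moment bounds of Lemma \ref{doumourydProp4} hold with constants uniform in $\epsilon$. Concretely, one must show that $\nabla^{j}_{\theta}\log g^{\epsilon}_{\theta}(y\vert x)$ is dominated, uniformly for $\epsilon$ up to some $R$, by an $\bar{E}_{\theta^{\ast}}$-integrable envelope built from the $\epsilon$-ball suprema in (A6); here the averaging structure of $g^{\epsilon}_{\theta}$ (an average of $g_{\theta}$ over $B^{\epsilon}_{y}$, which inherits the upper bound $\overline{c}_{1}$ and whose log-derivatives are dominated by the $\sup_{z\in B^{\epsilon}_{0}}$ terms in (A6)) must be matched carefully against the precise form of the assumption. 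This is exactly the point where the subtlety flagged in the main text --- that the perturbed and original models need not be mutually absolutely continuous --- forces the route through the Fisher/Louis identities and the forgetting lemmas rather than a direct Kullback--Leibler argument.
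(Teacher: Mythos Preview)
Your proposal is correct and follows essentially the same route as the paper: telescope the normalized derivatives into increments, identify them via Lemma~\ref{LemFishInf} as the $\Delta$/$\Gamma$ (and, for the third derivative bound, $\Psi$/$\Omega$) quantities of Lemma~\ref{doumourydProp4}, combine geometric forgetting with the ergodic theorem for pointwise convergence, and then upgrade to uniform convergence and integrate using Lemmas~\ref{lemrealanalresstand2} and~\ref{lemrealanalresstand}. The obstacle you flag---controlling $\nabla_{\theta}^{j}\log g_{\theta}^{\epsilon}$ in terms of the (A6) envelopes---is exactly what the paper handles via \eqref{AppAABCLLHLim1DerivPf101}--\eqref{AppAABCLLHLim1DerivPf102}, so your sketch is complete.
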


\begin{lemma} \label{AppAABCLLHLim1Deriv}
Suppose that assumptions (A1)-(A6) hold for a collection of HMMs parametrised by some vector $\theta \in \Theta$ and for any $\epsilon>0$ let $l^{\epsilon}\left(\theta\right)$ be equal to the corresponding limit function defined in Lemma \ref{AppAABCLLHLimLem}. Then for all $\theta \in \Theta$ one has that
\[
\lim_{\epsilon\to0}\nabla_{\theta} l^{\epsilon}\left(\theta \right)=\nabla_{\theta} l^{0} \left(\theta \right) .
\]

\end{lemma}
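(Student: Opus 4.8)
The plan is to realise each gradient $\nabla_{\theta} l^{\epsilon}(\theta)$ as a single stationary expectation, and then to pass to the limit $\epsilon \to 0$ inside that expectation by dominated convergence, controlling the interchange of the $n \to \infty$ and $\epsilon \to 0$ limits by means of the exponential memory-loss estimates of Lemma \ref{doumourydProp4}. Throughout, $\theta \in \Theta$ is fixed, since the claim is pointwise.

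First I would record the representation of the gradient that is already produced in the proof of Lemma \ref{AppAABCLLHLimLem}. With $\psi^{\epsilon}(\theta,x,x^{\prime},y) = \log\{ g^{\epsilon}_{\theta}(y \vert x^{\prime}) q_{\theta}(x,x^{\prime}) \}$ and $\phi^{\epsilon}_{1} = \nabla_{\theta}\psi^{\epsilon}$, the Fisher identity \eqref{LemFishInf1} exhibits $\frac{1}{n}\nabla_{\theta}\log p^{\epsilon}_{\theta}(Y_{1:n})$ as a time-average of the increments $\Delta^{\epsilon}_{0,k}(\theta)$ of Lemma \ref{doumourydProp4}, built with $\phi_{1} = \phi^{\epsilon}_{1}$, the outer process being the stationary data-generating HMM at $\theta^{\ast}$ and the inner smoother the perturbed model at $\theta$. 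Combining the almost sure convergence $\Delta^{\epsilon}_{0,k} \to \Delta^{\epsilon}_{0,\infty}$ with the ergodic theorem gives
\[
\nabla_{\theta} l^{\epsilon}(\theta) = \overline{\mathbb{E}}_{\theta^{\ast}}\bigl[ \Delta^{\epsilon}_{0,\infty}(\theta) \bigr], \qquad \epsilon \geq 0 .
\]
Because $g^{\epsilon}_{\theta}(y \vert x) = \nu(B^{\epsilon}_{y})^{-1}\int_{B^{\epsilon}_{y}} g_{\theta}(y^{\prime}\vert x)\,\nu(dy^{\prime})$ is a $\nu$-average of $g_{\theta}$ while $q_{\theta}$ is unchanged, the perturbed HMM satisfies (A3) with the same constants $\underline{c}_{1},\overline{c}_{1}$ for every $\epsilon$; hence the constants $C<\infty$, $0<\rho<1$ furnished by Lemma \ref{doumourydProp4} may be taken independent of $\epsilon$.

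Next I would supply the two analytic ingredients. For the domination, rewriting the score of the perturbed density as a $g_{\theta}$-weighted average yields the pointwise bound
\[
\bigl| \nabla_{\theta}\log g^{\epsilon}_{\theta}(y \vert x) \bigr| = \left| \frac{ \int_{B^{\epsilon}_{y}} \nabla_{\theta}\log g_{\theta}(y^{\prime}\vert x)\, g_{\theta}(y^{\prime}\vert x)\,\nu(dy^{\prime}) }{ \int_{B^{\epsilon}_{y}} g_{\theta}(y^{\prime}\vert x)\,\nu(dy^{\prime}) } \right| \leq \sup_{z \in B^{\epsilon}_{0}} \bigl| \nabla_{\theta}\log g_{\theta}(y+z \vert x) \bigr| ,
\]
so that, together with (A4) for the $q_{\theta}$-part, $\|\phi^{\epsilon}_{1}\|_{\infty}$ is dominated uniformly over $\epsilon \leq K$ by a function that is $\overline{\mathbb{E}}_{\theta^{\ast}}$-integrable by (A6). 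For the pointwise convergence, applying a differentiation theorem for the Radon measure $\nu$ on $\mathbb{R}^{m}$ (the Besicovitch form of the Lebesgue differentiation theorem) to the numerator and denominator separately shows $\nabla_{\theta}\log g^{\epsilon}_{\theta}(y\vert x) \to \nabla_{\theta}\log g_{\theta}(y\vert x)$ as $\epsilon \to 0$ for $\nu$-a.e.\ $y$; since the marginal law of each $\hat{Y}_{k}$ under $\theta^{\ast}$ is absolutely continuous w.r.t.\ $\nu$ by (A5), this holds $\overline{\mathbb{P}}_{\theta^{\ast}}$-a.s.\ at the observed points. Feeding these two facts into the finite-index smoother, and using that $q_{\theta}$ is bounded away from $0$ and $\infty$ by (A3) so that the smoothing weights depend continuously on the observation densities, dominated convergence gives, for each fixed $k$, that $\Delta^{\epsilon}_{0,k}(\theta) \to \Delta^{0}_{0,k}(\theta)$ in $L_{1}(\overline{\mathbb{P}}_{\theta^{\ast}})$ as $\epsilon \to 0$.

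Finally I would assemble the pieces with a standard double-limit argument. Writing
\[
\bigl| \nabla_{\theta} l^{\epsilon}(\theta) - \nabla_{\theta} l^{0}(\theta) \bigr| \leq \overline{\mathbb{E}}_{\theta^{\ast}}\bigl[ \bigl| \Delta^{\epsilon}_{0,\infty} - \Delta^{\epsilon}_{0,k} \bigr| \bigr] + \overline{\mathbb{E}}_{\theta^{\ast}}\bigl[ \bigl| \Delta^{\epsilon}_{0,k} - \Delta^{0}_{0,k} \bigr| \bigr] + \overline{\mathbb{E}}_{\theta^{\ast}}\bigl[ \bigl| \Delta^{0}_{0,k} - \Delta^{0}_{0,\infty} \bigr| \bigr] ,
\]
the first and third terms are at most $C\rho^{k}\sup_{\epsilon \leq K}\overline{\mathbb{E}}_{\theta^{\ast}}[\|\phi^{\epsilon}_{1}\|_{\infty}]$ by Lemma \ref{doumourydProp4}, uniformly in $\epsilon$; choosing $k$ large renders them arbitrarily small, after which letting $\epsilon \to 0$ annihilates the middle term by the finite-index convergence just established. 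I expect the genuine difficulty to lie in that middle step: establishing convergence of the fixed-$k$ perturbed smoothers demands both the a.e.\ differentiation statement evaluated at the random observation points (precisely the ``insufficient continuity'' of the $g^{\epsilon}_{\theta}$ flagged in the main text) and enough uniform control that the $\epsilon$-dependence of the conditioning law $E_{\theta^{\epsilon}}$ does not obstruct passage to the limit; it is the weighted-average domination by (A6) that makes dominated convergence applicable there.
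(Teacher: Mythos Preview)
Your proposal is correct and follows essentially the same route as the paper: both reduce the problem, via the $\epsilon$-uniform exponential memory-loss estimate of Lemma \ref{doumourydProp4} (your triangle inequality with the outer terms; the paper's invocation of \eqref{eq:ABCLLHLimLemPf3} and \eqref{AppAABCLLHLimLempf100}), to showing convergence of the finite-horizon smoothed score, and then establish the latter by writing the smoother as a ratio of integrals, applying the Lebesgue differentiation theorem to obtain $g^{\epsilon}_{\theta},\nabla_{\theta}g^{\epsilon}_{\theta} \to g_{\theta},\nabla_{\theta}g_{\theta}$ at $\overline{\mathbb{P}}_{\theta^{\ast}}$-a.e.\ observation point, and passing to the limit by dominated convergence. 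Your weighted-average domination bound for $\nabla_{\theta}\log g^{\epsilon}_{\theta}$ is exactly the content of the paper's \eqref{AppAABCLLHLim1DerivPf102}.
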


\begin{lemma}\label{AppAABCLLHLim2Deriv}Suppose that assumptions (A1)-(A6) hold for a collection of HMMs parametrised by some vector $\theta \in \Theta$ and for any $\epsilon>0$ let $l^{\epsilon}\left(\theta\right)$ be equal to the corresponding limit function defined in Lemma \ref{AppAABCLLHLimLem}. Then for all $\theta \in \Theta$ one has that
 \[
\lim_{\epsilon\to0}\nabla_{\theta}^{2} l^{\epsilon}\left(\theta \right)=\nabla_{\theta}^{2}l^{0} \left(\theta \right).\]
\end{lemma}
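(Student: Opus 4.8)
The plan is to bootstrap the second-order statement off the first-order convergence already supplied by Lemma~\ref{AppAABCLLHLim1Deriv}, rather than re-deriving an explicit representation of $\nabla_{\theta}^{2} l^{\epsilon}$ and passing $\epsilon \to 0$ inside it. First I would record the two structural facts I intend to use. From Lemma~\ref{AppAABCLLHLimLem} there is a single constant $K$ such that, \emph{uniformly in} $\epsilon \geq 0$, the maps $\theta \mapsto \nabla_{\theta}^{2} l^{\epsilon}(\theta)$ are bounded by $K$ and are $K$-Lipschitz on the connected compact set $\Theta$; and from Lemma~\ref{AppAABCLLHLim1Deriv} one has $\nabla_{\theta} l^{\epsilon}(\theta) \to \nabla_{\theta} l^{0}(\theta)$ pointwise in $\theta$ as $\epsilon \to 0$. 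The goal is to upgrade the latter into $\nabla_{\theta}^{2} l^{\epsilon} \to \nabla_{\theta}^{2} l^{0}$.

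The difficulty is that pointwise convergence of the gradients does not transfer to their derivatives, so I cannot simply differentiate the conclusion of Lemma~\ref{AppAABCLLHLim1Deriv}. Instead I would argue by compactness. Fix an arbitrary sequence $\epsilon_{j} \downarrow 0$. Because the matrix-valued maps $\nabla_{\theta}^{2} l^{\epsilon_{j}}$ are uniformly bounded and equi-Lipschitz on the compact set $\Theta$, the Arzel\`a--Ascoli theorem provides a subsequence (which I relabel $\epsilon_{j}$) along which $\nabla_{\theta}^{2} l^{\epsilon_{j}} \to h$ uniformly on $\Theta$ for some continuous $h : \Theta \to \mathbb{R}^{d \times d}$. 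It then remains to identify $h$ with $\nabla_{\theta}^{2} l^{0}$ and to argue that the choice of subsequence was immaterial.

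To identify $h$ I would apply Lemma~\ref{lemrealanalresstand} with $G = \Theta$ and $f_{j} \triangleq \nabla_{\theta} l^{\epsilon_{j}} : \Theta \to \mathbb{R}^{d}$. Its hypotheses are met: the functions $f_{j}$ are continuously differentiable (since $l^{\epsilon_j}$ is twice continuously differentiable by Lemma~\ref{AppAABCLLHLimLem}) with $\nabla f_{j} = \nabla_{\theta}^{2} l^{\epsilon_{j}} \to h$ uniformly, and $f_{j}(\theta^{\ast}) = \nabla_{\theta} l^{\epsilon_{j}}(\theta^{\ast})$ is Cauchy because it converges by Lemma~\ref{AppAABCLLHLim1Deriv}. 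The lemma then yields a continuously differentiable $f$ with $f_{j} \to f$ uniformly and $\nabla f = h$. But Lemma~\ref{AppAABCLLHLim1Deriv} also gives $f_{j} \to \nabla_{\theta} l^{0}$ pointwise, so the two limits must coincide, $f = \nabla_{\theta} l^{0}$, whence $h = \nabla f = \nabla_{\theta}^{2} l^{0}$ (using once more that $l^{0}$ is twice continuously differentiable).

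Finally, since every sequence $\epsilon_{j} \downarrow 0$ admits a subsequence along which $\nabla_{\theta}^{2} l^{\epsilon_{j}}$ converges uniformly, and since that limit was just shown to be $\nabla_{\theta}^{2} l^{0}$ regardless of the subsequence, a standard subsequence argument forces $\nabla_{\theta}^{2} l^{\epsilon} \to \nabla_{\theta}^{2} l^{0}$ uniformly, and in particular pointwise, as $\epsilon \to 0$; this is the claim. The only place where genuine content enters is the identification step via Lemma~\ref{lemrealanalresstand}; everything else is compactness. I expect the main obstacle to be precisely this bootstrapping logic — recognising that the first-order convergence of Lemma~\ref{AppAABCLLHLim1Deriv} cannot be differentiated directly, and that the uniform equi-Lipschitz control of Lemma~\ref{AppAABCLLHLimLem} is exactly what supplies the missing uniform convergence of the Hessians needed to invoke the analysis lemma.
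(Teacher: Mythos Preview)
Your argument is correct, and it takes a genuinely different route from the paper. The paper does not prove Lemma~\ref{AppAABCLLHLim2Deriv} separately at all: it simply remarks that the proof is ``almost identical'' to that of Lemma~\ref{AppAABCLLHLim1Deriv} and leaves the details to the reader. Concretely, the intended argument is to repeat the direct computation --- use the Louis identity \eqref{LemFishInf2} and the uniform-in-$\epsilon$ geometric bounds of Lemma~\ref{doumourydProp4} to reduce to showing $\bar{E}_{\theta^\ast}\bigl[\tfrac{1}{n}\nabla_\theta^2\log p_\theta^\epsilon(Y_{1:n})\bigr]\to\bar{E}_{\theta^\ast}\bigl[\tfrac{1}{n}\nabla_\theta^2\log p_\theta(Y_{1:n})\bigr]$ for each fixed $n$, and then pass $\epsilon\to 0$ inside the (now more numerous) conditional-expectation terms via the Lebesgue differentiation theorem and dominated convergence, exactly as in \eqref{AppAABCLLHLim1DerivPf3}--\eqref{AppAABCLLHLim1DerivPf7}.

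Your approach instead bootstraps: Arzel\`a--Ascoli on the uniformly bounded, equi-Lipschitz family $\{\nabla_\theta^2 l^\epsilon\}$ supplied by \eqref{eq:ABCLLHLimLem3}, followed by Lemma~\ref{lemrealanalresstand} to identify any subsequential limit with $\nabla_\theta^2 l^0$ using only the first-order convergence of Lemma~\ref{AppAABCLLHLim1Deriv}. This is shorter and avoids re-running the Lebesgue-differentiation/DCT machinery on the messier second-order expression \eqref{LemFishInf2}; it also delivers uniform convergence in $\theta$ directly, which is what Theorem~\ref{secABCABCBayesPostthm} actually uses. The paper's route, by contrast, is self-contained at second order and does not rely on Lemma~\ref{AppAABCLLHLim1Deriv} as an input, at the cost of duplicating the analysis.
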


In order to complete this section we need to provide the proofs of Lemmas \ref{AppAABCLLHLimLem}, \ref{AppAABCLLHLim1Deriv} and \ref{AppAABCLLHLim2Deriv}.  We start by stating some properties of the perturbed conditional likelihood \eqref{EqnPertCondLaw} that will be needed in the sequel.  First note that it follows from assumptions (A2) and (A5) and a simple application of the dominated convergence theorem that
\begin{equation} \label{AppAABCLLHLim1DerivPf101}
\nabla_{\theta} g^{\epsilon}_{\theta} \left(y\vert x\right)\triangleq\frac{\int_{B_{y}^{\epsilon}} \nabla_{\theta} g_{\theta} \left(z\vert x\right) \nu ( dz )}{\int_{B_{y}^{\epsilon}} \nu\left(dz\right)} 
\end{equation}
and that $\nabla_{\theta} g^{\epsilon}_{\theta} \left(y\vert x\right)$ is continuous w.r.t.~$\theta$ for all $\epsilon$, $x$ and $y$.
Furthermore since 
\begin{equation*}
\int_{B_{y}^{\epsilon}} \nabla_{\theta} g_{\theta} \left(z\vert x\right) \nu ( dz ) \leq \sup_{\theta \in \Theta} \sup_{x \in \mathcal{X}} \sup_{z \in B_{y}^{\epsilon}}  \left(  \frac{\nabla_{\theta} g_{\theta} \left(z\vert x\right)}{g_{\theta} \left(z\vert x\right)} \right) \times  \int_{B_{y}^{\epsilon}} g_{\theta} \left(z\vert x\right) \nu\left(dz\right) 
\end{equation*}
it follows from \eqref{AppAABCLLHLim1DerivPf101} and assumption (A5) that for any $\epsilon > 0$
  \begin{equation} \label{AppAABCLLHLim1DerivPf102}
E_{\theta^{\ast}}\left[\sup_{\theta \in \Theta} \sup_{x \in \mathcal{X}} \left\Vert \nabla_{\theta}\log g_{\theta}^{\epsilon} \left(Y \vert x\right)\right\Vert  \right] < \infty .
\end{equation}
Finally we note that analogous comments hold for $ g^{\epsilon}_{\theta} \left(y\vert x\right)$ and $\nabla^{2}_{\theta} g^{\epsilon}_{\theta} \left(y\vert x\right)$.

We now proceed to the proof of Lemma \ref{AppAABCLLHLimLem}.

\bigskip

\begin{proof}[Proof of Lemma \ref{AppAABCLLHLimLem}]
First note that for any $n$ the gradient of the mean log ABC likelihood
may be decomposed into the following telescoping sum
\begin{equation}
\nabla_{\theta}\frac{1}{n}\log p_{\theta}^{\epsilon}( Y_{1},\ldots, Y_{n})=\frac{1}{n}\sum_{i=1}^{n} h^{\epsilon}_{\theta} (Y_{1:i}) . \label{eq:ABCLLHLimLemPf3}
\end{equation}
where for any $k < n$
\begin{equation} \label{eq:ABCLLHLimLemPf300}
h^{\epsilon}_{\theta} (Y_{k:n}) := \nabla_{\theta}\log p_{\theta}^{\epsilon}( Y_{k},\ldots, Y_{n})-\nabla_{\theta}\log p_{\theta}^{\epsilon}( Y_{k},\ldots, Y_{n-1}) .
\end{equation}
It then follows from  \eqref{eq:LemdoumourydPropBounds6}  and  \eqref{LemFishInf1}    that there exist constants $K < \infty$ and $0 < \rho < 1$ such that for all $\theta \in \Theta$, $\epsilon \geq 0$ and $n > 0$ there exists some $\sigma (Y_{-\infty : 0 }) $ measurable random variable $R^{\epsilon}_{\theta}(Y_{-\infty:0})$ such that 
\begin{equation} \label{AppAABCLLHLimLempf100}
\bar{E}_{\theta^{\ast}}\left[ \sup_{k \geq n} \Big\vert h^{\epsilon}_{\theta} (Y_{-n:0}) - R^{\epsilon}_{\theta}(Y_{-\infty:0})\Big\vert \right] \leq K \rho^{n} .
\end{equation}
We note that by \eqref{AppAABCLLHLim1DerivPf101} and \eqref{AppAABCLLHLim1DerivPf102} and the accompanying comments and the dominated convergence theorem  that $E_{\theta^{\ast}}\left[  h^{\epsilon}_{\theta} (Y_{-n:0})  \right] $ is continuous for all $n$ and hence by \eqref{AppAABCLLHLimLempf100} that $E_{\theta^{\ast}}\left[  R^{\epsilon}_{\theta}(Y_{-\infty:0})  \right] $ is continuous.  Further it then follows from \eqref{AppAABCLLHLimLempf100} and two applications of the ergodic theorem that for any $m >0$
\begin{align}
& \limsup_{n \to \infty} \left\vert \frac{1}{n}\sum_{i=1}^{n} h^{\epsilon}_{\theta} (Y_{i:i}) - E_{\theta^{\ast}}\big[  R^{\epsilon}_{\theta} (Y_{-\infty:0}) \big] \right\vert \notag \\
& \leq \limsup_{n \to \infty} \left\vert \frac{1}{n}\sum_{i=1}^{m} h^{\epsilon}_{\theta} (Y_{1:i}) - E_{\theta^{\ast}}\big[ R^{\epsilon}_{\theta} (Y_{-\infty:0}) \big] \right\vert \notag \\
& + \limsup_{n \to \infty} \left\vert \frac{1}{n}\sum_{i=m+1}^{n}  R^{\epsilon}_{\theta}(Y_{-\infty:i}) - E_{\theta^{\ast}}\big[  R^{\epsilon}_{\theta} (Y_{-\infty:0}) \big] \right\vert \notag \\
& + \limsup_{n \to \infty} \frac{1}{n}\sum_{i=m+1}^{n}\sup_{k \geq 0} \bigg\vert h^{\epsilon}_{\theta} (Y_{1-k:i}) - R^{\epsilon}_{\theta}(Y_{-\infty:i})\bigg\vert \notag\\ 
& \leq K \rho^{m} \label{eq:ABCLLHLimLemPf4}
\end{align}
and
\begin{align}
& \limsup_{n \to \infty} \sup_{\theta \in \Theta}  \left\vert \frac{1}{n} \sum_{i=1}^{n} h^{\epsilon}_{\theta} (Y_{1:i}) \right\vert \leq \limsup_{n \to \infty} \frac{1}{n}\sum_{i=1}^{n} \sup_{\theta \in \Theta} \sup _{k \leq i-1} \bigg\vert  h^{\epsilon}_{\theta} (Y_{k:i}) \bigg\vert \leq K . \label{eq:ABCLLHLimLemPf9}
\end{align}
Thus we have by \eqref{eq:ABCLLHLimLemPf3}, \eqref{eq:ABCLLHLimLemPf4} and \eqref{eq:ABCLLHLimLemPf9} that $\overline{\mathbb{P}}_{\theta^{\ast}}$ a.s.
\begin{equation} \label{eq:ABCLLHLimLemPf11}
\nabla_{\theta}\frac{1}{n}\log p_{\theta}^{\epsilon}( Y_{1},\ldots, Y_{n}) \to E_{\theta^{\ast}}\big[ R^{\epsilon}_{\theta} (Y_{-\infty:0}) \big] 
\end{equation}
pointwise in $\theta$ for some continuous in $\theta$ function  $\bar{E}_{\theta^{\ast}}\big[ R^{\epsilon}_{\theta} (Y_{-\infty:0}) \big] $ and that $\vert \nabla_{\theta}\frac{1}{n}\log p_{\theta}^{\epsilon}( Y_{1},\ldots, Y_{n}) \vert$ is eventually uniformly bounded above by $K$.

Moreover it follows from \eqref{eq:LemdoumourydPropBounds6}, \eqref{LemFishInf2} and \eqref{LemFishInf3} and a similar argument as above that for any $\theta \in \Theta$ and $\epsilon \geq 0$ there exist $\sigma (Y_{-\infty : 0}) $ measurable random variables $S^{\epsilon}_{\theta}(Y_{-\infty:0})$ and $T^{\epsilon}_{\theta}(Y_{-\infty:0})$ such that  $E_{\theta^{\ast}}\left[  S^{\epsilon}_{\theta}(Y_{-\infty:0})  \right] $ and $E_{\theta^{\ast}}\left[  T^{\epsilon}_{\theta}(Y_{-\infty:0})  \right] $ are continuous functions of $\theta$, that
\begin{equation} \label{eq:ABCLLHLimLemPf6}
\begin{gathered}
\nabla_{\theta}^{2} \frac{1}{n}\log p_{\theta}^{\epsilon}( Y_{1},\ldots, Y_{n}) \to \bar{E}_{\theta^{\ast}}\left[ S^{\epsilon}_{\theta}(Y_{-\infty:0}) \right] , \\
\nabla_{\theta}^{3}\frac{1}{n}\log p_{\theta}^{\epsilon}( Y_{1},\ldots, Y_{n}) \to \bar{E}_{\theta^{\ast}} \left[ T^{\epsilon}_{\theta}(Y_{-\infty:0}) \right]
\end{gathered}
\end{equation} 
$\overline{\mathbb{P}}_{\theta^{\ast}}$ a.s.~and in $L^{1}\left(\overline{\mathbb{P}}_{\theta^{\ast}}\right)$  and that $\overline{\mathbb{P}}_{\theta^{\ast}}$ a.s.~eventually $\vert \nabla_{\theta}^{2}\frac{1}{n}\log p_{\theta}^{\epsilon}( Y_{1},\ldots, Y_{n}) \vert$ and $\vert \nabla_{\theta}^{3}\frac{1}{n}\log p_{\theta}^{\epsilon}( Y_{1},\ldots, Y_{n}) \vert$ are both uniformly bounded above by $K$.  Since the fact that $\vert \nabla_{\theta}^{2}\frac{1}{n}\log p_{\theta}^{\epsilon}( Y_{1},\ldots, Y_{n}) \vert$ and $\vert \nabla_{\theta}^{3}\frac{1}{n}\log p_{\theta}^{\epsilon}( Y_{1},\ldots, Y_{n}) \vert$  are both uniformly bounded above implies that both $\vert \nabla_{\theta} \frac{1}{n}\log p_{\theta}^{\epsilon}( Y_{1},\ldots, Y_{n}) \vert$ and $\vert \nabla_{\theta}^{2}\frac{1}{n}\log p_{\theta}^{\epsilon}( Y_{1},\ldots, Y_{n}) \vert$ are Lipschitz the result now follows from Lemmas \ref{lemrealanalresstand2} and \ref{lemrealanalresstand}.
\end{proof}

\bigskip

In remains to prove  Lemmas \ref{AppAABCLLHLim1Deriv} and \ref{AppAABCLLHLim2Deriv}.  Since the proofs of these two lemmas are almost identical we prove only Lemma \ref{AppAABCLLHLim1Deriv} and leave the details of the proof of Lemma \ref{AppAABCLLHLim2Deriv} to the reader.

\bigskip

\begin{proof}[Proof of Lemma \ref{AppAABCLLHLim1Deriv}]
It follows
from \eqref{eq:ABCLLHLimLemPf3} and \eqref{AppAABCLLHLimLempf100} that in order to prove the result it is sufficient to show that 
\begin{align}
\lim_{\epsilon\to0} \bar{E}_{\theta^{\ast}}  \left[  \frac{1}{n} \nabla_{\theta} \log p_{\theta}^{\epsilon}( Y_{1},\ldots, Y_{n}) \right] = \bar{E}_{\theta^{\ast}}  \left[  \frac{1}{n} \nabla_{\theta} \log p_{\theta}( Y_{1},\ldots, Y_{n}) \right]   \label{AppAABCLLHLim1DerivPf700}
\end{align}
for all  $n$ and $\theta$ and hence by \eqref{LemFishInf1}, \eqref{AppAABCLLHLim1DerivPf101} and \eqref{AppAABCLLHLim1DerivPf102} and the accompanying comments and the dominated convergence theorem that 
\begin{align} \label{AppAABCLLHLim1DerivPf2}
& E_{\theta^{\epsilon}}\left[  \nabla_{\theta} \left( \log g_{\theta}^{\epsilon}\left( Y_{k} \vert X_{k} \right)q_{\theta}\left( X_{k-1} , X_{k} \right) \right) \vert Y_{1:n} \right]  \notag \\
& \qquad =  E_{\theta}\left[  \nabla_{\theta} \left( \log g_{\theta} \left( Y_{k} \vert X_{k} \right)q_{\theta}\left( X_{k-1} , X_{k} \right) \right) \vert Y_{1:n} \right] 
\end{align}
$\overline{\mathbb{P}}_{\theta^{\ast}}$ a.s.~for all  $\theta$ and $1 \leq k \leq n$.   Recall that 
\begin{align}
&  E_{\theta^{\epsilon}}\left[  \nabla_{\theta} \left( \log g_{\theta}^{\epsilon}\left( Y_{k} \vert X_{k} \right)q_{\theta}\left( X_{k-1} , X_{k} \right) \right) \vert Y_{1:n} \right]  \label{AppAABCLLHLim1DerivPf3} \\
& =  \frac{ \int_{\mathcal{X}^{n}}   \nabla_{\theta}  \left(  \log g^{\epsilon}_{\theta} \left( Y_{k} \vert x_{k} \right)  q_{\theta}\left( x_{k-1} , x_{k} \right)  \right)  \prod_{i=1}^{n}   \left(  g^{\epsilon}_{\theta} \left( Y_{i} \vert x_{i} \right) q_{\theta}\left( x_{i-1} , x_{i} \right) \right) \mu(dx_{1:n}) }{  \int_{\mathcal{X}^{n}}   \prod_{i=1}^{n}   \left(  g^{\epsilon}_{\theta} \left( Y_{i} \vert x_{i} \right) q_{\theta}\left( x_{i-1} , x_{i} \right) \right) \mu(dx_{1:n})    } \notag
\end{align}
and
\begin{align}
& E_{\theta}\left[  \nabla_{\theta} \left( \log g_{\theta} \left( Y_{k} \vert X_{k} \right)q_{\theta}\left( X_{k-1} , X_{k} \right) \right) \vert Y_{1:n} \right]  \label{AppAABCLLHLim1DerivPf4} \\
& =  \frac{ \int_{\mathcal{X}^{n}}   \nabla_{\theta}  \left(  \log g_{\theta} \left( Y_{k} \vert x_{k} \right)  q_{\theta}\left( x_{k-1} , x_{k} \right)  \right)  \prod_{i=1}^{n}   \left(  g_{\theta} \left( Y_{i} \vert x_{i} \right) q_{\theta}\left( x_{i-1} , x_{i} \right) \right) \mu(dx_{1:n}) }{  \int_{\mathcal{X}^{n}}   \prod_{i=1}^{n}   \left(  g_{\theta} \left( Y_{i} \vert x_{i} \right) q_{\theta}\left( x_{i-1} , x_{i} \right) \right) \mu(dx_{1:n})    } . \notag
\end{align}
Further we have by \eqref{AppAABCLLHLim1DerivPf102} and the accompanying comments that we can use the Lebesgue differentiation theorem (see for example \citep{whezyg1977}) to deduce that for all $x \in \mathcal{X}$ that
\begin{equation}  \label{AppAABCLLHLim1DerivPf7}
 \nabla_{\theta}    g^{\epsilon}_{\theta} \left( Y_{k} \vert x \right) \to  \nabla_{\theta}   g_{\theta} \left( Y_{k} \vert x \right) , \,   g^{\epsilon}_{\theta} \left( Y_{k} \vert x \right) \to   g_{\theta} \left( Y_{k} \vert x \right) 
\end{equation}
$\overline{\mathbb{P}}_{\theta^{\ast}}$ a.s..  It  now follows from assumptions (A2) and (A5), \eqref{AppAABCLLHLim1DerivPf102} etc.~and  \eqref{AppAABCLLHLim1DerivPf7} and the dominated convergence theorem that the numerator and denominator of the quantity in \eqref{AppAABCLLHLim1DerivPf3} converge to respectively the numerator and denominator of the quantity in \eqref{AppAABCLLHLim1DerivPf4}.  Since by assumption (A4) we have that 
\[
\int_{\mathcal{X}^{n}}   \prod_{i=1}^{n}   \left(  g_{\theta} \left( Y_{i} \vert x_{i} \right) q_{\theta}\left( x_{i-1} , x_{i} \right) \right) \mu(dx_{1:n})  > 0
\] 
$\overline{\mathbb{P}}_{\theta^{\ast}}$ a.s.~we obtain  \eqref{AppAABCLLHLim1DerivPf2}.
\end{proof}

\renewcommand{\thesection}{C}
\renewcommand{\theequation}{C-\arabic{equation}}

\section*{Appendix C: Proofs of Theorems \ref{thmABCMLEconsistency}, \ref{thmABCMLEnormality} and \ref{secABCPostBernVMthm}} 

\begin{proof}[Proof of Theorem \ref{thmABCMLEconsistency}]
It follows immediately from Theorem \ref{lemrealanalresstand2} that the first part of Theorem \ref{thmABCMLEconsistency} will hold with  the set $\mathcal{T}^{\epsilon}$  equal to the set of maximisers of $l^{\epsilon} ( \theta )$.  Note that since $l^{\epsilon} ( \theta )$ is continuous and $\Theta$ compact $\mathcal{T}^{\epsilon}$ will always be well defined and non-empty.  Further, \eqref{ABCMLEmaxset} follows from the uniform convergence of $l^{\epsilon} ( \theta )$ to $l^{0} ( \theta )$ and the continuity of the surfaces.

It remains to prove the second part of the theorem.  Suppose now that $\nabla_{\theta}^{2} l^{0} ( \theta^{\ast} )$ is strictly negative definite.   We have from the last part of Theorem \ref{lemrealanalresstand2} that 
\begin{equation} \label{secABCABCBayesPostthmpfeq3}
\lim_{\delta \to 0} \lim_{\epsilon \to 0} \sup_{\vert \theta - \theta^{\ast} \vert \leq \delta} \left\Vert \nabla_{\theta}^{2} l^{\epsilon} (\theta) - \nabla_{\theta}^{2} l^{0} (\theta^{\ast}) \right\Vert = 0 .
\end{equation}
Equation \eqref{secABCABCBayesPostthmpfeq3} implies that there exists some $\delta > 0$ such that for sufficiently small $\epsilon$ the surface $l^{\epsilon} ( \theta )$ has at most one local maximum in the $\delta$ neighbourhood of $\theta^{\ast}$.  The result now follows from \eqref{ABCMLEmaxset}.
\end{proof}

\bigskip

\begin{proof}[Proof of Theorem \ref{thmABCMLEnormality}]
Letting the matrix $I_{\epsilon}$ be equal to $\nabla_{\theta}^{2} l^{\epsilon} (\theta^{\ast, \epsilon})$ it follows from Theorem \ref{secABCABCBayesPostthm} and standard results on the asymptotic normality of the MLE (see for example \citep{doumouryd2004}) that in order to prove Theorem \ref{thmABCMLEnormality} it is sufficient to show that for $\epsilon$ sufficiently small there exists some strictly positive definite matrix $J_{\epsilon}$ such that 
\begin{equation} \label{thmABCMLEnormalityPf1}
\frac{1}{\sqrt{n}} \nabla_{\theta}  \log p_{\theta^{\ast, \epsilon}}^{\epsilon}( Y_{1},\ldots, Y_{n}) \to N ( 0 , J_{\epsilon} )
\end{equation}
and
\begin{equation} \label{thmABCMLEnormalityPf4}
J_{\epsilon} \to I
\end{equation}
as $\epsilon \to 0$ where $I = \nabla^{2}_{\theta} l^{0} ( \theta^{\ast} )$.

We begin by proving \eqref{thmABCMLEnormalityPf1}.  We have by \eqref{eq:ABCLLHLimLemPf3} and \eqref{AppAABCLLHLimLempf100} that 
\begin{align} 
& \frac{1}{\sqrt{n}} \nabla_{\theta}  \log p_{\theta^{\ast, \epsilon}}^{\epsilon}( Y_{1},\ldots, Y_{n}) \notag \\
& = 
\frac{1}{\sqrt{n}}  \sum_{i=1}^{n} R^{\epsilon}_{\theta^{\ast, \epsilon}}(Y_{-\infty:i})  + \frac{1}{\sqrt{n}}  \sum_{i=1}^{n} h^{\epsilon}_{\theta^{\ast, \epsilon}} (Y_{1:i}) - R^{\epsilon}_{\theta^{\ast, \epsilon}}(Y_{-\infty:i}) \label{thmABCMLEnormalityPf2}
\end{align}
where $h^{\epsilon}_{\theta^{\ast, \epsilon}} (Y_{1:i})$ is as defined in \eqref{eq:ABCLLHLimLemPf300}.  We note that it follows from \eqref{doumourydProp41eq} that one can use  similar arguments to those used to deduce \eqref{eq:ABCLLHLimLemPf3} to show that
\begin{equation} \label{thmABCMLEnormalityPf9}
\bar{E}_{\theta^{\ast}} \left[ \sup_{k \geq n} \Big\vert h^{\epsilon}_{\theta} (Y_{-n:0}) - R^{\epsilon}_{\theta}(Y_{-\infty:0})\Big\vert ^{2} 
\right] \leq K \rho^{n} .
\end{equation}
It then follows from \eqref{thmABCMLEnormalityPf9} that 
\begin{equation*}
\bar{E}_{\theta^{\ast}}\left[ h^{\epsilon}_{\theta^{\ast, \epsilon}} (Y_{-n:i}) \vert Y_{-\infty:0} \right]  \xrightarrow{L_{2}} \bar{E}_{\theta^{\ast}} \left[ R^{\epsilon}_{\theta^{\ast, \epsilon}}(Y_{-\infty:i})  \vert Y_{-\infty:0} \right]  
\end{equation*}
and likewise for conditional expectations w.r.t.~$\sigma ( Y_{-\infty:-1} )$ and hence by \eqref{doumourydProp42eq} that there exists some $K$ such that 
\begin{align}
\bar{E}_{\theta^{\ast}} \left[  \left\vert \bar{E}_{\theta^{\ast}} \left[ R^{\epsilon}_{\theta^{\ast, \epsilon}}(Y_{-\infty:i})  \vert Y_{-\infty:0} \right]  -  \bar{E}_{\theta^{\ast}}  \left[ R^{\epsilon}_{\theta^{\ast, \epsilon}}(Y_{-\infty:i})  \vert Y_{-\infty:-1} \right]  \right\vert^{2}  \right]  \leq  K  \rho^{i}  \label{thmABCMLEnormalityPf3}
\end{align}
for all $i$.  Equation \eqref{thmABCMLEnormalityPf3} immediately implies that the sequence of random variables $R^{\epsilon}_{\theta^{\ast, \epsilon}}(Y_{-\infty:0}), R^{\epsilon}_{\theta^{\ast, \epsilon}}(Y_{-\infty:1}) , \ldots $ satisfies the conditions of Theorem 5 in \citep{vol1993} and hence we have that
\begin{align}   \label{thmABCMLEnormalityPf5}
\frac{1}{\sqrt{n}}  \sum_{i=1}^{n} R^{\epsilon}_{\theta^{\ast, \epsilon}}(Y_{-\infty:i})  \xrightarrow{\text{weakly}}  N(0 , J_{\epsilon})
\end{align}
where 
\begin{align}  \label{thmABCMLEnormalityPf6}
 J_{\epsilon}  =  \lim_{n \to \infty}   \bar{E}_{\theta^{\ast}}  \left[  \left(  \frac{1}{\sqrt{n}}  \sum_{i=1}^{n} R^{\epsilon}_{\theta^{\ast, \epsilon}}(Y_{-\infty:i})  \right)^{2}  \right]  .
\end{align}
Finally we note that it follows from \eqref{AppAABCLLHLimLempf100}, Markov's inequality and the Borel-Cantelli lemma that 
\begin{equation}  \label{thmABCMLEnormalityPf7}
\bar{\mathbb{P}}_{\theta^{\ast}} \left( \left\vert h^{\epsilon}_{\theta^{\ast, \epsilon}} (Y_{1:i}) - R^{\epsilon}_{\theta^{\ast, \epsilon}}(Y_{-\infty:i})  \right\vert >  \frac{1}{i^{2}} \text{ i.o. } \right) = 0.
\end{equation}
Equation \eqref{thmABCMLEnormalityPf1} now follows from \eqref{thmABCMLEnormalityPf5} and \eqref{thmABCMLEnormalityPf7}.

To complete the proof of the theorem it remains to prove \eqref{thmABCMLEnormalityPf4}.  It immediately follows from  \eqref{eq:ABCLLHLimLemPf300}, \eqref{thmABCMLEnormalityPf9} and \eqref{thmABCMLEnormalityPf6} that for all $\epsilon$
\begin{align}  \label{thmABCMLEnormalityPf8}
 J_{\epsilon}  =  \lim_{n \to \infty}   \bar{E}_{\theta^{\ast}} \left[ \frac{1}{n} \nabla_{\theta}  \log p_{\theta^{\ast, \epsilon}}^{\epsilon}( Y_{1},\ldots, Y_{n})^{2}  \right]  
\end{align}
where the convergence is uniform in $n$.  Next we note that by a simple application of the Fisher identity (see for example \citep{doumouryd2004}) that 
\begin{align*}
\bar{E}_{\theta^{\ast}} \left[  \nabla_{\theta}^{2}  \log p_{\theta^{\ast}} ( Y_{1},\ldots, Y_{n})  \right]  =   \bar{E}_{\theta^{\ast}} \left[  \nabla_{\theta}  \log p_{\theta^{\ast}} ( Y_{1},\ldots, Y_{n})^{2}  \right]
\end{align*}
and thus by \eqref{eq:ABCLLHLimLemPf6} and Lemma \ref{AppAABCLLHLimLem} that
\begin{align}
I =  \lim_{n \to \infty}  \bar{E}_{\theta^{\ast}} \left[  \frac{1}{n} \nabla_{\theta}  \log p_{\theta^{\ast}}^{\epsilon}( Y_{1},\ldots, Y_{n})^{2}  \right]   \label{thmABCMLEnormalityPf10}
\end{align}
In order to complete the proof of \eqref{thmABCMLEnormalityPf4} it is thus sufficient, by \eqref{thmABCMLEnormalityPf8} and \eqref{thmABCMLEnormalityPf10}, to show that for all $n$
\begin{align}  \label{thmABCMLEnormalityPf11}
 \lim_{\epsilon \to \infty}   \bar{E}_{\theta^{\ast}} \left[ \frac{1}{n} \nabla_{\theta}  \log p_{\theta^{\ast, \epsilon}}^{\epsilon}( Y_{1},\ldots, Y_{n})^{2}  \right]   =   \bar{E}_{\theta^{\ast}} \left[  \frac{1}{n} \nabla_{\theta}  \log p_{\theta^{\ast}} ( Y_{1},\ldots, Y_{n})^{2}  \right]   .
\end{align}
Finally we note that \eqref{thmABCMLEnormalityPf11} can be proved in exactly the same way as \eqref{AppAABCLLHLim1DerivPf7} in the proof of Lemma \ref{AppAABCLLHLim1Deriv}.  In order to this we need to show that 
\begin{equation}   \label{thmABCMLEnormalityPf12} 
 \nabla_{\theta}    g^{\epsilon}_{\theta^{\ast, \epsilon}} \left( Y_{k} \vert x \right) \to  \nabla_{\theta}   g_{\theta^{\ast}} \left( Y_{k} \vert x \right) , \,   g^{\epsilon}_{\theta^{\ast, \epsilon}} \left( Y_{k} \vert x \right) \to   g_{\theta^{\ast}} \left( Y_{k} \vert x \right) 
\end{equation}
 as $\epsilon \to 0$.  However \eqref{thmABCMLEnormalityPf12}  follows from \eqref{AppAABCLLHLim1DerivPf7} and the fact that by assumptions (A2) and (A6) we have that $\mathbb{P}_{\theta^{\ast}}$ a.s.~the functions $\nabla_{\theta}    g_{\theta} \left( Y_{k} + z \vert x \right)$ and $g_{\theta} \left( Y_{k} + z \vert x \right) $ are uniformly Lipschitz (as functions of $\theta$) for all $z \in B^{\epsilon}_{0}$.
\end{proof}

\bigskip

\begin{proof}[Proof of Theorem \ref{secABCPostBernVMthm}]
The proof of this result follows from  standard Bernstein-Von Mises type arguments, see for example \citep{borkalrao1971}.
\end{proof}

\renewcommand{\thesection}{D}
\renewcommand{\theequation}{D-\arabic{equation}}

\section*{Appendix D: Proofs of Theorems \ref{secABCsmallepsthmGenCase} and \ref{secABCsmallepsthmLeb}}

A central role in the proof of Theorem \ref{secABCsmallepsthmGenCase} will be played by the following time inhomogeneous versions of the perturbed HMM \eqref{alg1pertHMM}.

Suppose that one has a collection of HMMs parametrised by some parameter vector $\theta \in \Theta$ and that for each value of $\theta$ the conditional laws and transition kernels of the corresponding HMM have densities $g_{\theta} ( y \vert x)$ and $q_{\theta} ( x , x^{\prime} )$ respectively.  Given some $\theta \in \Theta$ and $\epsilon > 0$ define the HMM  $\left\{ X^{\epsilon,+}_{k}, Y^{\epsilon,+}_{k} \right\}_{k \in \mathbb{Z} }$ by
\begin{align} \label{pertHMM+}
X^{\epsilon,+}_{k}, Y^{\epsilon,+}_{k} = X_{k}, Y_{k} \text{ for all } k \leq 0; \quad X^{\epsilon,+}_{k}, Y^{\epsilon,+}_{k} = X_{k}, Y_{k} + \epsilon Z_{k} \text{ o.w. }
\end{align}
where $\left\{ X_{k}, Y_{k} \right\}_{k \in \left\{ \mathbb{Z} \right\}}$ is the original HMM and $\left\{ Z_{k}  \right\}_{k \geq 0  }$ is a collection of i.i.d.~$\mathcal{U}_{B^{1}_{0}}$ random variables.  Similarly define the HMM $\left\{ X^{\epsilon,-}_{k}, Y^{\epsilon,-}_{k} \right\}_{k \in \mathbb{Z} }$ by
\begin{align} \label{pertHMM-}
X^{\epsilon,-}_{k}, Y^{\epsilon,-}_{k} = X_{k}, Y_{k} \text{ for all } k < 0; \quad X^{\epsilon,-}_{k}, Y^{\epsilon,-}_{k} = X_{k}, Y_{k} + \epsilon Z_{k} \text{ o.w.}.
\end{align}
Clearly the transition kernels of the HMMs \eqref{pertHMM+} and \eqref{pertHMM-} are equal to $q_{\theta} \left( x , x^{\prime} \right)$ and the conditional densities of the observed state are equal to 
\begin{equation} \label{LemGradLLHDiff1}
g^{\epsilon, +}_{\theta , k} \left( y \vert x \right) = 
\left\{
\begin{array}{cc}
g^{\epsilon}_{\theta } \left( y \vert x \right) & \text{ if  } \, k > 0 \\
g_{\theta } \left( y \vert x \right) & \text{ otherwise }
\end{array}
\right. 
\end{equation}
and
\begin{equation} \label{LemGradLLHDiff2}
g^{\epsilon, -}_{\theta , k} \left( y \vert x \right) = 
\left\{
\begin{array}{cc}
g^{\epsilon}_{\theta } \left( y \vert x \right) & \text{ if  } \, k \geq 0 \\
g_{\theta } \left( y \vert x \right) & \text{ otherwise }
\end{array}
\right. 
\end{equation}
respectively.  

Let $p_{\theta^{\epsilon,+}} ( \cdots )$, $\mathbb{P}_{\theta^{\epsilon,+}} \left( \cdot \right)$, $E_{\theta^{\epsilon,+}} \left[ \cdot \right]$, $E_{\theta^{\epsilon,+}} \left[ \cdot \vert \cdot \right]$ and $\mathbb{P}_{\theta^{\epsilon,+}} \left( \cdot \vert \cdot \right)$ and  $p_{\theta^{\epsilon,-}} ( \cdots )$, $\mathbb{P}_{\theta^{\epsilon,-}} \left( \cdot \right)$, $E_{\theta^{\epsilon,-}} \left[ \cdot \right]$, $E_{\theta^{\epsilon,-}} \left[ \cdot \vert \cdot \right]$ and $\mathbb{P}_{\theta^{\epsilon,-}} \left( \cdot \vert \cdot \right)$ denote the likelihood functions, laws and expectation, conditional expectation and conditional probability operators w.r.t. to the laws of \eqref{pertHMM+} and \eqref{pertHMM-}.  It follows by definition that
\begin{equation} \label{pertHMMsLLHrel}
\begin{gathered}
p_{\theta} ( y_{1} , \ldots , y_{n})  =  p_{\theta^{\epsilon,+}} ( Y_{-n+1} = y_{1},\ldots, Y_{0} = y_{n} ) \\
p^{\epsilon}_{\theta} ( y_{1} , \ldots , y_{n})  =  p_{\theta^{\epsilon,-}} ( Y_{0} = y_{1},\ldots, Y_{n-1} = y_{n} ) \\
p_{\theta^{\epsilon,+}} ( Y_{-k+1} = y_{-k+1},\ldots, Y_{n} = y_{n} ) = p_{\theta^{\epsilon,-}} ( Y_{-k} = y_{-k+1},\ldots, Y_{n-1} = y_{n} )  .
\end{gathered}
\end{equation}
Recall that by \eqref{eq:ABCLLHLimLemPf3} and \eqref{AppAABCLLHLimLempf100} we have that 
\begin{align}  
& \nabla_{\theta} l \left(\theta \right)-\nabla_{\theta} l^{\epsilon} \left(\theta \right) \label{LemGradLLHDiffPf1} \\
& \qquad   = \lim_{n \to \infty} \frac{1}{n} \left(  \bar{E}_{\theta^{\ast}} \left[  \nabla_{\theta} \log p_{\theta} ( Y_{1},\ldots, Y_{n}) \right] - \bar{E}_{\theta^{\ast}}  \left[  \nabla_{\theta} \log p_{\theta}^{\epsilon} ( Y_{1},\ldots, Y_{n}) \right]  \right)   \notag
 \end{align}
and thus by \eqref{pertHMMsLLHrel} and \eqref{LemGradLLHDiffPf1} that we have the telescoping sum
\begin{align} \label{LemGradLLHDiffPf5}
\lefteqn{ \nabla_{\theta} l \left(\theta \right)-\nabla_{\theta} l^{\epsilon} \left(\theta \right) = \lim_{n \to \infty} \frac{1}{n} \sum_{i=1}^{n} \Big(  \bar{E}_{\theta^{\ast}} \left[  \nabla_{\theta} \log p_{\theta^{\epsilon,+}} ( Y_{-n+i},\ldots, Y_{i-1}) \right] }\notag \\
&& \qquad \qquad \qquad \qquad  - \bar{E}_{\theta^{\ast}} \left[  \nabla_{\theta} \log p_{\theta^{\epsilon,-}} ( Y_{-n+i},\ldots, Y_{i-1}) \right]  \Big)  .
 \end{align}
We now note that Theorems \ref{secABCsmallepsthmGenCase} and \ref{secABCsmallepsthmLeb} follow immediately from \eqref{LemGradLLHDiffPf5} and the following lemma
\begin{lemma} \label{LemGradLLHRateAux}
Suppose that assumptions (A2)-(A7) hold for a collection of HMMs parametrised by some vector $\theta \in \Theta$.  Then there exists a finite constant $K$ such that for all $\epsilon > 0$ and integers $k, n$
\begin{align}   
& \Big\vert  E_{\theta^{\ast}}\left[  \nabla_{\theta} \log p_{\theta^{\epsilon,+}} ( Y_{-k},\ldots, Y_{n}) \right]   - E_{\theta^{\ast}}\left[  \nabla_{\theta} \log p_{\theta^{\epsilon,-}} ( Y_{-k},\ldots, Y_{n}) \right]    \Big\vert \label{LemGradLLHRate5} \\
& \qquad \qquad \qquad \qquad   \leq K    \epsilon .  \notag
\end{align}
Furthermore suppose that $\nu$ is Lebesgue measure and that assumption (A8) also holds.  Then
\begin{align}   
& \Big\vert  E_{\theta^{\ast}}\left[  \nabla_{\theta} \log p_{\theta^{\epsilon,+}} ( Y_{-k},\ldots, Y_{n}) \right]   - E_{\theta^{\ast}}\left[  \nabla_{\theta} \log p_{\theta^{\epsilon,-}} ( Y_{-k},\ldots, Y_{n}) \right]    \Big\vert \label{LemGradLLHRate5} \\
& \qquad \qquad \qquad \qquad   \leq K    \epsilon^{2} .  \notag
\end{align}
\end{lemma}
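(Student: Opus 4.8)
The plan is to exploit the fact that within the window $Y_{-k:n}$ the two time-inhomogeneous models \eqref{pertHMM+} and \eqref{pertHMM-} differ in their conditional observation density at the single time index $0$ and nowhere else: by \eqref{LemGradLLHDiff1} and \eqref{LemGradLLHDiff2} the ``$+$'' model uses $g_{\theta}(Y_{0}\vert\cdot)$ at time $0$ whereas the ``$-$'' model uses $g^{\epsilon}_{\theta}(Y_{0}\vert\cdot)$. Since by \eqref{EqnPertCondLaw} the perturbed density is exactly the $\nu$-average of $g_{\theta}$ over the ball $B^{\epsilon}_{Y_{0}}$, it follows that, writing $a(y')\triangleq p_{\theta^{\epsilon,+}}(Y_{-k:-1},y',Y_{1:n})$ for the ``$+$'' likelihood viewed as a function of the time-$0$ observation,
\[
p_{\theta^{\epsilon,-}}(Y_{-k:n})=\frac{1}{\nu(B^{\epsilon}_{Y_{0}})}\int_{B^{\epsilon}_{Y_{0}}}a(y')\,\nu(dy').
\]
Differentiating under the integral (justified by (A2), (A5) and dominated convergence exactly as in \eqref{AppAABCLLHLim1DerivPf101}) and setting $r(y')\triangleq\nabla_{\theta}\log p_{\theta^{\epsilon,+}}(Y_{-k:-1},y',Y_{1:n})=\nabla_{\theta}a(y')/a(y')$, a short manipulation yields the clean identity
\[
\nabla_{\theta}\log p_{\theta^{\epsilon,-}}(Y_{-k:n})-\nabla_{\theta}\log p_{\theta^{\epsilon,+}}(Y_{-k:n})=\frac{\int_{B^{\epsilon}_{Y_{0}}}\bigl(r(y')-r(Y_{0})\bigr)a(y')\,\nu(dy')}{\int_{B^{\epsilon}_{Y_{0}}}a(y')\,\nu(dy')}.
\]
The quantity to be bounded is therefore a weighted local average, over a ball of radius $\epsilon$, of the increment $r(y')-r(Y_{0})$ of the ``$+$'' score.

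First I would treat the general case. Since $\lvert y'-Y_{0}\rvert\le\epsilon$ on $B^{\epsilon}_{Y_{0}}$, the mean-value estimate $\lvert r(y')-r(Y_{0})\rvert\le\epsilon\,\sup_{\xi\in B^{\epsilon}_{Y_{0}}}\lvert\nabla_{y}r(\xi)\rvert$ together with the fact that a weighted average is dominated by a supremum give
\[
\Bigl\lvert\nabla_{\theta}\log p_{\theta^{\epsilon,-}}(Y_{-k:n})-\nabla_{\theta}\log p_{\theta^{\epsilon,+}}(Y_{-k:n})\Bigr\rvert\le\epsilon\,\sup_{\xi\in B^{\epsilon}_{Y_{0}}}\lvert\nabla_{y}r(\xi)\rvert.
\]
Taking $E_{\theta^{\ast}}$ thus reduces the first claim of the lemma to showing that $E_{\theta^{\ast}}\bigl[\sup_{\xi\in B^{\epsilon}_{Y_{0}}}\lvert\nabla_{y}r(\xi)\rvert\bigr]$ is bounded by a constant uniform in $k$ and $n$.

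This uniform-in-horizon estimate is the main obstacle. Differentiating the Fisher-identity representation \eqref{LemFishInf1} in the time-$0$ observation shows that $\nabla_{y}r(\xi)$ splits into a conditional covariance of the form $E_{\theta^{\epsilon,+}}[\nabla_{\theta}\psi_{n}\,s\mid Y]-E_{\theta^{\epsilon,+}}[\nabla_{\theta}\psi_{n}\mid Y]\,E_{\theta^{\epsilon,+}}[s\mid Y]$, where $s$ is the time-$0$ observation score $\nabla_{y}g_{\theta}/g_{\theta}$ evaluated at $\xi$, plus an explicit single-site mixed-derivative term. Although $\nabla_{\theta}\psi_{n}$ is additive over the whole window, $s$ is localised at time $0$, so by the exponential forgetting of the filter this covariance is a telescoping sum with geometrically decaying increments; this is precisely the structure quantified by Lemma \ref{doumourydProp4} (and Lemma \ref{doumourydProp41}) applied with test functions incorporating $s$, which delivers a bound uniform in $k$ and $n$. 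The $\epsilon$-uniform $L^{2}(\theta^{\ast})$-integrability of $s$ and of the accompanying mixed quantity $\nabla_{y}\nabla_{\theta}g_{\theta}/g_{\theta}$ appearing in $\nabla_{y}r$ is supplied exactly by assumption (A7); a Cauchy--Schwarz step then gives $E_{\theta^{\ast}}\bigl[\sup_{\xi}\lvert\nabla_{y}r(\xi)\rvert\bigr]\le K$ and hence the $O(\epsilon)$ bound.

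For the Lebesgue case I would carry the expansion one order further. When $\nu$ is Lebesgue measure the ball is symmetric, so $\int_{B^{\epsilon}_{Y_{0}}}(y'-Y_{0})\,dy'=0$ and the naive first-order contribution vanishes. Expanding both $r(y')-r(Y_{0})=\nabla_{y}r(Y_{0})\cdot(y'-Y_{0})+\tfrac{1}{2}(y'-Y_{0})^{\top}\nabla_{y}^{2}r\,(y'-Y_{0})+\cdots$ and the weight $a(y')=a(Y_{0})\bigl(1+(\nabla_{y}a/a)\cdot(y'-Y_{0})+\cdots\bigr)$ against the symmetric ball, the surviving terms are the second-order Taylor term of $r$ and the cross term pairing the first-order term of $r$ with the first-order term of the weight; integrating $(y'-Y_{0})(y'-Y_{0})^{\top}$ over $B^{\epsilon}_{Y_{0}}$ produces a factor $O(\epsilon^{2})$, so both contributions are $O(\epsilon^{2})$. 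The cross term is controlled by the product $\nabla_{y}r\cdot(\nabla_{y}a/a)$ via (A7) and Cauchy--Schwarz as before, while the second-order term requires a uniform-in-horizon bound on $\nabla_{y}^{2}r$; a second $y$-differentiation of \eqref{LemFishInf1} expresses this through $\nabla_{y}^{2}g_{\theta}/g_{\theta}$ and higher conditional covariances, which are handled by the same mixing lemmas together with assumption (A8). Collecting these estimates yields the improved $O(\epsilon^{2})$ bound and completes the proof.
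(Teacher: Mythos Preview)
Your approach is correct and genuinely different from the paper's. The paper never introduces the averaging identity $p_{\theta^{\epsilon,-}}(Y_{-k:n})=\nu(B^{\epsilon}_{Y_{0}})^{-1}\int_{B^{\epsilon}_{Y_{0}}}a(y')\,\nu(dy')$; instead it expands both gradients via the Fisher identity \eqref{LemFishInf1} into $\sum_{i} E_{\theta^{\epsilon,\pm}}[\,\nabla_{\theta}\log g^{\epsilon,\pm}_{\theta,i}(Y_i\vert X_i)q_{\theta}(X_{i-1},X_i)\mid Y_{-k:n}\,]$ and compares term by term. The $i=0$ summand is handled by the pointwise estimate $\vert\nabla_{\theta}\log g_{\theta}-\nabla_{\theta}\log g^{\epsilon}_{\theta}\vert=O(\epsilon)$ coming from (A7), and for $i\neq 0$ the paper bounds the total-variation distance $\Vert\mathbb{P}_{\theta^{\epsilon,+}}(X_{i-1},X_i\mid Y_{-k:n})-\mathbb{P}_{\theta^{\epsilon,-}}(X_{i-1},X_i\mid Y_{-k:n})\Vert_{TV}$ by $K\epsilon\rho^{\vert i\vert}$, using mixing of the conditional chain to reduce to $i=0$ and then the explicit reweighting identity \eqref{LemGradLLHRateAux3} to convert this into $\vert g_{\theta}-g^{\epsilon}_{\theta}\vert/g_{\theta}=O(\epsilon)$. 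For the Lebesgue case the paper simply says the argument is ``almost identical'', the point being that under (A8) the pointwise quantities $\vert g_{\theta}-g^{\epsilon}_{\theta}\vert/g_{\theta}$ and $\vert\nabla_{\theta}g_{\theta}-\nabla_{\theta}g^{\epsilon}_{\theta}\vert/g_{\theta}$ are themselves $O(\epsilon^{2})$ by second-order Taylor and ball symmetry. Your route makes the role of symmetry more transparent and avoids the TV-distance machinery, at the cost of needing a uniform-in-horizon bound on $\nabla_{y}r$ (and $\nabla_{y}^{2}r$); the paper's route avoids differentiating in $y$ altogether but is less conceptual about why the order improves.

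One small caveat: Lemma~\ref{doumourydProp4} as stated concerns increments of quantities in which \emph{both} test functions $\phi_{1},\phi_{2}$ are additive over the whole window, whereas your covariance pairs the additive score $\nabla_{\theta}\psi_{-k:n}$ with a functional $s$ localised at $X_{0}$. What you actually need is the direct smoothing-mixing bound $\vert\mathrm{Cov}_{\theta^{\epsilon,+}}[\phi(X_{i-1},X_i,Y_i),\,s(X_0)\mid Y_{-k:n}]\vert\le C\rho^{\vert i\vert}\Vert\phi\Vert_{\infty}\Vert s\Vert_{\infty}$, which follows from (A3) by the same arguments that underlie Lemma~\ref{doumourydProp4} (and which the paper also uses via \eqref{eq:lemUnifErgDual1}); summing over $i$ and applying Cauchy--Schwarz with (A6), (A7) then gives your uniform bound. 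So your citation of Lemma~\ref{doumourydProp4} should be understood as invoking its proof technique rather than its literal statement.
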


\begin{proof}
We shall prove only the first part of the lemma, the proof of the second part being almost identical.  Clearly analogous expressions to \eqref{LemFishInf1} hold for the HMMs \eqref{pertHMM+} and \eqref{pertHMM-} and thus in particular we have that the term on the left hand side of \eqref{LemGradLLHRate5} is bounded by
\begin{align}   
& \left\vert \bar{E}_{\theta^{\ast}}  \left[ \sum_{i=-k}^{n}  E_{\theta^{\epsilon,+}}\left[  \nabla_{\theta}   \log g_{\theta,i}^{\epsilon,+}\left( Y_{i} \vert X_{i} \right)q_{\theta}\left( X_{i-1} , X_{i} \right) \Big\vert  Y_{-k:n}\right] -  \right. \right.  \notag  \\
& \qquad \qquad \left. \left. \sum_{i=-k}^{n}  E_{\theta^{\epsilon,-}}\left[ \nabla_{\theta}   \log g_{\theta,i}^{\epsilon,-}\left( Y_{i} \vert X_{i} \right)q_{\theta}\left( X_{i-1} , X_{i} \right) \Big\vert  Y_{-k:n}\right]  \right]    \right\vert  \label{LemGradLLHRateAuxPf1}
\end{align}
where $g_{\theta,i}^{\epsilon,+}$ and $g_{\theta,i}^{\epsilon,-}$ are as in \eqref{LemGradLLHDiff1} and \eqref{LemGradLLHDiff2}.  Using the identity 
\begin{align*} 
&  \frac{  \nabla_{\theta}  g_{\theta} \left( Y_{0} \vert X_{0} \right)  }{  g_{\theta} \left( Y_{0} \vert X_{0} \right)  }  -   \frac{  \nabla_{\theta}  g_{\theta}^{\epsilon} \left( Y_{0} \vert X_{0} \right)  }{  g_{\theta}^{\epsilon} \left( Y_{0} \vert X_{0} \right)  }  =  \left( \frac{  \nabla_{\theta}  g_{\theta} \left( Y_{0} \vert X_{0} \right)  }{  g_{\theta} \left( Y_{0} \vert X_{0} \right)  }  -   \frac{  \nabla_{\theta}  g_{\theta}^{\epsilon} \left( Y_{0} \vert X_{0} \right)  }{  g_{\theta}\left( Y_{0} \vert X_{0} \right)  }  \right)  \\  
& +  \frac{  \nabla_{\theta}  g_{\theta}^{\epsilon} \left( Y_{0} \vert X_{0} \right)  }{  g_{\theta}^{\epsilon} \left( Y_{0} \vert X_{0} \right)  }  \left( \frac{   g_{\theta}^{\epsilon}  \left( Y_{0} \vert X_{0} \right) - g_{\theta} \left( Y_{0} \vert X_{0} \right)  }{  g_{\theta} \left( Y_{0} \vert X_{0} \right)  }  \right) 
\end{align*}
it is clear from \eqref{AppAABCLLHLim1DerivPf101} and assumptions (A6) and (A7) that there exists some $K^{\prime}$ such that 
\begin{align}   
& \left\vert  \bar{E}_{\theta^{\ast}}  \left[   E_{\theta^{\epsilon,+}}\left[  \nabla_{\theta}   \log g_{\theta,0}^{\epsilon,+}\left( Y_{i} \vert X_{i} \right)q_{\theta}\left( X_{i-1} , X_{i} \right) \Big\vert  Y_{-k:n}\right] -  \right. \right.  \notag  \\
& \qquad \qquad \left. \left. E_{\theta^{\epsilon,+}}\left[ \nabla_{\theta}   \log g_{\theta,0}^{\epsilon,-}\left( Y_{i} \vert X_{i} \right)q_{\theta}\left( X_{i-1} , X_{i} \right) \Big\vert  Y_{-k:n}\right]  \right]    \right\vert  \leq K^{\prime} \epsilon. 
\end{align}
It then follows from the definitions of $g_{\theta,i}^{\epsilon,+}$ and $g_{\theta,i}^{\epsilon,-}$ and from assumptions (A2)-(A6) that in order to derive \eqref{LemGradLLHRate5} from \eqref{LemGradLLHRateAuxPf1} it is sufficient to show that for all $i,k,n$
\begin{align}
\bar{E}_{\theta^{\ast}}  \left[ \left\Vert  \mathbb{P}_{\theta^{\epsilon,+}} \left( X_{i-1}, X_{i}  \big\vert  Y_{-k:n} \right)  - \mathbb{P}_{\theta^{\epsilon,-}} \left( X_{i-1}, X_{i}  \big\vert  Y_{-k:n} \right)   \right\Vert_{TV}  \right] \leq K^{\prime \prime}  \epsilon \rho^{ \vert i \vert} \label{LemGradLLHRateAuxPf2}
\end{align}
for some $K^{ \prime \prime}$.  

We first note that it follows from standard results concerning uniformly mixing Markov chains, see for example \citep{caprydmou2005, del2004} that  there exist some $K$ and  $0<\rho<1$ such that for all $i$  and $x,x^{\prime} \in \mathcal{X}$ one has that
\begin{align}  
& \Big\Vert \mathbb{P}_{\theta^{\epsilon,+}} \left( X_{i-1}, X_{i} \vert X_{0} = x \right)   -  \mathbb{P}_{\theta^{\epsilon,+}} \left( X_{i-1}, X_{i} \vert X_{0} = x^{\prime} \right)   \Big\Vert_{TV}   \leq K \rho^{\vert i \vert - 1 }  .\label{eq:lemUnifErgDual1}
\end{align}
Since by definition we have that the marginal laws of $\mathbb{P}_{\theta^{\epsilon,+}} \left( Y_{-\infty:-1 ; 1:\infty} \right)$ and $\mathbb{P}_{\theta^{\epsilon,-}} \left( Y_{-\infty:-1 ; 1:\infty} \right)$ are equal  it follows that in order to prove \eqref{LemGradLLHRateAuxPf2} it is sufficient to only prove it for the case $i=0$.

To prove \eqref{LemGradLLHRateAuxPf2}  for $i=0$ we shall make use of the following simple identities.  For any $\phi \in L_{\infty}$
\begin{equation}  \label{LemGradLLHRateAux3}
\begin{gathered}
 E_{\theta^{\epsilon,+}} \left[  \phi (X_{0}) \vert Y_{\infty  : \infty} \right]  = \frac{  E_{\theta^{\epsilon,+}} \left[ \phi (X_{0}) g_{\theta} \left( Y_{0} \vert X_{0} \right) \vert Y_{\infty : -1 ; 1 : \infty} \right]  }{  E_{\theta^{\epsilon,+}} \left[   g_{\theta} \left( Y_{0} \vert X_{0} \right)  \vert Y_{\infty : -1 ; 1 : \infty} \right]  } \\
  E_{\theta^{\epsilon,-}} \left[  \phi (X_{0}) \vert Y_{\infty  : \infty} \right]  = \frac{  E_{\theta^{\epsilon,-}} \left[ \phi (X_{0}) g_{\theta}^{\epsilon} \left( Y_{0} \vert X_{0} \right) \vert Y_{\infty : -1 ; 1 : \infty} \right]  }{  E_{\theta^{\epsilon,-}} \left[   g_{\theta}^{\epsilon} \left( Y_{0} \vert X_{0} \right)  \vert Y_{\infty : -1 ; 1 : \infty} \right]  }  .
\end{gathered}
\end{equation}
It then follows from \eqref{LemGradLLHRateAux3} using basic algebra  that 
\begin{align}
& \sup_{\phi : \left\Vert \phi \right\Vert_{\infty} \leq 1 } \bar{E}_{\theta^{\ast}}   \bigg[  \Big\vert E_{\theta^{\epsilon,+}} \left[  \phi (X_{0}) \vert Y_{\infty  : \infty} \right]  -   E_{\theta^{\epsilon,-}} \left[  \phi (X_{0}) \vert Y_{\infty  : \infty} \right]  \Big\vert \bigg] \notag \\
& \leq    \bar{E}_{\theta^{\ast}}  \left[   \frac{   E_{\theta^{\epsilon,+}} \left[  \left\vert   g_{\theta}  - g_{\theta}^{\epsilon} \right\vert  \vert Y_{\infty : -1 ; 1 : \infty} \right]     }{  E_{\theta^{\epsilon,+}} \left[   g_{\theta}  \vert Y_{\infty : -1 ; 1 : \infty} \right]  }     + \frac{  E_{\theta^{\epsilon,-}} \left[   \left\vert   g_{\theta}  - g_{\theta}^{\epsilon} \right\vert   \vert Y_{\infty : -1 ; 1 : \infty} \right] }{  E_{\theta^{\epsilon,-}} \left[   g_{\theta}  \vert Y_{\infty : -1 ; 1 : \infty} \right]  }   \right]  .\label{LemGradLLHRateAux5} 
\end{align}
The result now follows follows immediately from \eqref{LemGradLLHRateAux5}  and assumption (A7).
\end{proof}

\bigskip

\bibliographystyle{plainnat}
\bibliography{references}

\begin{thebibliography}{20}
\providecommand{\natexlab}[1]{#1}
\providecommand{\url}[1]{\texttt{#1}}
\expandafter\ifx\csname urlstyle\endcsname\relax
  \providecommand{\doi}[1]{doi: #1}\else
  \providecommand{\doi}{doi: \begingroup \urlstyle{rm}\Url}\fi

\bibitem[Borwanker et~al.(1971)Borwanker, Kallianpur, and
  Prakasa~Rao]{borkalrao1971}
J.~Borwanker, G.~Kallianpur, and B.L.S. Prakasa~Rao.
\newblock The bernstein-von mises theorem for markov processes.
\newblock \emph{Ann. Math. Stat.}, 42:\penalty0 1241--1253, 1971.

\bibitem[Capp\'{e} et~al.(2005)Capp\'{e}, Ryd\'{e}n, and
  Moulines]{caprydmou2005}
O.~Capp\'{e}, T.~Ryd\'{e}n, and E.~Moulines.
\newblock \emph{Inference in Hidden Markov Models}.
\newblock Springer-Verlag: New York, 2005.

\bibitem[Dean et~al.(2010)Dean, Singh, Jasra, and Peters]{deasinjaspet2010}
T.A. Dean, S.S. Singh, A.~Jasra, and G.W. Peters.
\newblock Parameter estimation for hidden markov models with intractable
  likelihoods.
\newblock Technical report, Cambridge University Engineering Department., 2010.

\bibitem[Del~Moral(2004)]{del2004}
P.~Del~Moral.
\newblock \emph{Feynman-Kac Formulae: Genealogical and Interacting Particle
  Systems with Applications}.
\newblock Springer-Verlag: New York., 2004.

\bibitem[Douc et~al.(2004)Douc, Moulines, and Ryden]{doumouryd2004}
R.~Douc, E.~Moulines, and T.~Ryden.
\newblock Asymptotic properties of the maximum likelihood estimator in
  autoregressive models with markov regime.
\newblock \emph{Ann. Statist.}, 32:\penalty0 2254--2304, 2004.

\bibitem[Durbin et~al.(1998)Durbin, Eddy, Krogh, and
  Mitchison]{dureddkromit1998}
R.~Durbin, S.~Eddy, A.~Krogh, and G.~Mitchison.
\newblock \emph{Biological Sequence Analysis: Probabilistic Models of Proteins
  and Nucleic Acids}.
\newblock CUP: Cambridge., 1998.

\bibitem[Fearnhead and Prangle(2010)]{feapra2010}
P.~Fearnhead and D.~Prangle.
\newblock Semi-automatic approximate bayesian computation.
\newblock Technical, University of Lancaster., 2010.

\bibitem[Felsenstein and Churchill(1996)]{felchu1996}
J.~Felsenstein and G.A. Churchill.
\newblock A hidden markov model approach to variation among sites in rate of
  evolution.
\newblock \emph{Mol. Biol. Evol.}, 13:\penalty0 93--104, 1996.

\bibitem[Jasra et~al.(2010)Jasra, Singh, Martin, and McCoy]{jassinmarmcc2010}
A.~Jasra, S.~S. Singh, J.~S. Martin, and E.~McCoy.
\newblock Filtering via approximate bayesian computation.
\newblock \emph{Stat. Comput,}, 2010.

\bibitem[Kim et~al.(1998)Kim, Shephard, and Chib]{kimshechi1998}
S.~Kim, N.~Shephard, and S.~Chib.
\newblock Stochastic volatility: Likelihood inference and comparison with arch
  models.
\newblock \emph{Rev. Econom. Stud.}, 65:\penalty0 361--393, 1998.

\bibitem[McKinley et~al.(2009)McKinley, Cook, and Deardon]{mckcoodea2009}
J.~McKinley, C.~Cook, and R.~Deardon.
\newblock Inference for epidemic models without likelihooods.
\newblock \emph{Intl. J. Biostat.}, 5, 2009.

\bibitem[Peters et~al.(2010)Peters, W\"uthrich, and Shevchenko]{petwutshe2010}
G.~Peters, M.~W. W\"uthrich, and P.~Shevchenko.
\newblock Chain ladder method: Bayesian bootstrap versus classical bootstrap.
\newblock \emph{Insurance Math. Econom.}, 47:\penalty0 36--51., 2010.

\bibitem[Pritchard et~al.(1999)Pritchard, Seielstad, Perez-Lezaun, and
  Feldman]{priseiperfel1999}
J.~K. Pritchard, M.~T. Seielstad, A.~Perez-Lezaun, and P.~Feldman.
\newblock Population growth of human y chromosome microsatellites.
\newblock \emph{Mol. Biol. Evol.}, 16:\penalty0 1791--1798., 1999.

\bibitem[Ratmann et~al.(2009)Ratmann, Andrieu, Wiuf, and
  Richardson]{ratandwiuric2009}
O.~Ratmann, C.~Andrieu, C.~Wiuf, and S.~Richardson.
\newblock Model criticism based on likelihood-free inference, with an
  application to protein network evolution.
\newblock \emph{Proc. Natl. Acad. Sci. USA}, 106:\penalty0 10576--10581., 2009.

\bibitem[Sisson and Fan(to be published)]{sisfan2010}
S.~Sisson and Y.~Fan.
\newblock Likelihood-free markov chain monte carlo.
\newblock In \emph{Brooks, S. P., Gelman, A., Jones, G. and Meng X.L.~(Eds);
  Handbook of Markov Chain Monte Carlo}. Chapman \& Hall: London, to be
  published.

\bibitem[Tavre et~al.(1997)Tavre, Balding, Griffiths, and
  Donnelly]{tavbalgridon1997}
S.~Tavre, D.~J. Balding, R.~C. Griffiths, and P.~Donnelly.
\newblock Inferring coalescence times from dna sequence data.
\newblock \emph{Genetics}, 145:\penalty0 505--518., 1997.

\bibitem[Voln\'{y}(1993)]{vol1993}
D.~Voln\'{y}.
\newblock Approximating martingales and the central limit theorem for strictly
  stationary processes.
\newblock \emph{Stoch. Proc. Appl.}, 44:\penalty0 41--74, 1993.

\bibitem[Wand and Jones(1995)]{wanjon1995}
M.P. Wand and M.C. Jones.
\newblock \emph{Kernel Smoothing}.
\newblock Chapman \& Hall/CRC, 1995.

\bibitem[Wheeden and Zygmund(1977)]{whezyg1977}
R.~L. Wheeden and A.~Zygmund.
\newblock \emph{Measure and Integral; An Introduction to Real Analysis}.
\newblock Marcel Dekker, New York., 1977.

\bibitem[White(1982)]{whi1982}
H.~White.
\newblock Maximum likelihood estimation of misspecified models.
\newblock \emph{Econometrica}, 50:\penalty0 1--25., 1982.

\end{thebibliography}

\end{document}